\DeclareMathAlphabet{\mathcalligra}{T1}{calligra}{m}{n}
\DeclareMathOperator{\CA}{\mathrm{CA}}
\newcommand{\tfr}{\mathfrak{t}}
\newcommand{\gfr}{\mathfrak{g}}
\newcommand{\hfr}{\mathfrak{h}}
\newcommand{\C}{\mathbb{C}}
\newcommand{\R}{\mathbb{R}}
\newcommand{\Z}{\mathbb{Z}}
\newcommand{\Q}{\mathbb{Q}}
\newcommand{\Cc}{\mathbf{C}}
\newcommand{\Bbo}{\mathbf{B}^\circ}
\newcommand{\Bb}{\mathbf{B}}
\newcommand{\bsigma}{\bm{\sigma}}
\newcommand{\bLambda}{\bm{\Lambda}}
\newcommand{\X}{\mathcal{X}}
\newcommand{\cu}{\Sigma}
\newcommand{\Ub}{\bm{U}}
\newcommand{\bq}{\bm{q}}
\newcommand{\bc}{\bm{\chi}}
\newcommand{\mh}{\mathcal{M}}
\newcommand{\tcu}{\widetilde{\cu}}
\renewcommand{\to}{\longrightarrow}
\newcommand{\id}{\mathrm{id}}
\definecolor{cherry}{rgb}{0.9,.1,.2}
\definecolor{purple}{rgb}{0.6,0,0.6}
\definecolor{bl}{rgb}{0.13,0.67,0.9} 
\definecolor{tur}{rgb}{0.1,0.9,0.9}
\definecolor{gr}{rgb}{0.2,0.7,0.2}
\definecolor{pink}{rgb}{1,0,0.8}
\definecolor{Darkblue}{rgb}{0.2, 0.2, 0.6}
\newcounter{save}
\newcounter{rmk}
\theoremstyle{plain}
\newtheorem{thm}{Theorem}[subsection]
\newtheorem{prop}[thm]{Proposition}
\newtheorem{lem}[thm]{Lemma}
\newtheorem{cor}[thm]{Corollary}
\newenvironment{introthm}[1]
{\introthmcmd}
{\endintrothmcmd}
\theoremstyle{definition}
\newtheorem{dfn}[thm]{Definition}
\newtheorem{ex}[thm]{Example}
\newtheorem*{question*}{Question}
\theoremstyle{remark}
\newtheorem{rem}[thm]{Remark}
\numberwithin{equation}{section}
\author{Florian Beck}
\author{Ron Donagi}
\author{Katrin Wendland}
\title{Folding of Hitchin systems and crepant resolutions}
\begin{document}
\maketitle
\vspace{-0.6cm}
\begin{abstract}
	Folding of ADE-Dynkin diagrams according to graph automorphisms yields irreducible Dynkin diagrams of $\mathrm{ABCDEFG}$-types. 
	This folding procedure  allows to trace back the properties of the corresponding simple 
	Lie algebras or groups to those of $\mathrm{ADE}$-type. 
	In this article, we implement the techniques of folding by graph automorphisms
	for Hitchin integrable systems.
	We show that the fixed point loci of these automorphisms
	are isomorphic as algebraic integrable systems to the 
	Hitchin systems of the folded groups away from singular fibers. 
	The latter Hitchin systems are isomorphic to the intermediate Jacobian
	fibrations of Calabi--Yau orbifold stacks constructed by the first author. 
	We construct simultaneous crepant resolutions of the associated singular quasi-projective Calabi--Yau threefolds and compare the resulting intermediate Jacobian fibrations to the corresponding Hitchin systems. 
\end{abstract}
\vspace{-.8cm}
\tableofcontents

\section{Introduction}\label{intro}
Any non-simply-laced Dynkin diagram $\Delta$ is obtained from a 
simply-laced
one $\Delta_h$ (i.e.~of type $\mathrm{ADE}$\footnote{Some authors, 
for example Slodowy \cite{Slo}, call the $\mathrm{ADE}$- or simply-laced 
Dynkin diagrams `homogeneous'. This explains the subscript $h$.}) by folding.
Folding is the process of identifying nodes of $\Delta_h$ according to a cyclic 
subgroup $\Cc\subset \mathrm{Aut}(\Delta_h)$ of the graph automorphism of $\Delta_h$, for example:
\begin{figure}[h]
	\scalebox{0.85}{
		\begin{tikzpicture}
		\coordinate (A) at (0,0) {}; 
		\coordinate (B) at (1,0) {}; 
		\coordinate (C) at (2,0) {}; 
		\coordinate (D) at (3,0) {}; 
		\coordinate (E) at (4,0) {}; 
		\coordinate (A5) at (-1.5,.25) {}; 
		\coordinate (Cc) at (-1.5,-.25) {};
		
		\coordinate (F1) at (4.75,0) {}; 
		\coordinate (G1) at (6.25,0) {};
		\coordinate (H1) at (7,0) {};
		\coordinate (I1) at (8,0) {}; 
		\coordinate (J1) at (9,0) {};
		\coordinate (Bhalf) at (1.5, -0.9) {}; 
		\coordinate (Hhalf) at (1.5,-1.45) {}; 
		
		\coordinate (C3) at (11,0) {}; 
		
		\node [fill=black, circle, inner sep=0pt, minimum size=5pt] at (A) {}; 
		\node [fill=black, circle, inner sep=0pt, minimum size=5pt] at (B) {};
		\node [fill=black, circle, inner sep=0pt, minimum size=5pt] at (C) {};
		\node [fill=black, circle, inner sep=0pt, minimum size=5pt] at (D) {};
		\node [fill=black, circle, inner sep=0pt, minimum size=5pt] at (E) {}; 
		\node [fill=black, circle, inner sep=0pt, minimum size=5pt] at (H1) {}; 
		\node [fill=black, circle, inner sep=0pt, minimum size=5pt] at (I1) {}; 
		\node [fill=black, circle, inner sep=0pt, minimum size=5pt] at (J1) {}; 
		\node at (A5) {$\Delta_h=\mathrm{A}_5$}; 
		\node at (C3) {$\Delta=\Delta_{h,\Cc}=\mathrm{C}_3$};
		\node at (Cc) {$\Cc=\Z/2\Z$}; 
		
		\draw (A) -- (B);
		\draw (B) -- (C); 
		\draw (C) -- (D); 
		\draw (C) -- (E); 
		\draw [<->, shorten <= 0.15cm, shorten >=0.15cm, in=90, out=90] (A) to (E);
		\draw [<->, shorten <= 0.15cm, shorten >=0.15cm, in=90, out=90] (B) to (D);
		\draw (H1) -- (I1);
		\draw[<->]  (F1) -- node[midway] {} (G1); 
		
		\draw
		(8,0.04) -- (9,.04)
		(8,-0.04) -- (9,-.04)
		(8.4,0) --++  (50:.2) 
		(8.4,0)  --++  (-50:.2);
		
		\end{tikzpicture}}
	\caption{Folding of $\Delta_h=\mathrm{A}_5$ to $\Delta=\Delta_{h,\Cc}=\mathrm{C}_3$.}
	\label{Figure1}
\end{figure}

In Lie theory, folding effectively reduces the study of simple complex Lie algebras 
$\gfr=\gfr(\Delta)$ with folded Dynkin diagram $\Delta=\Delta_{h,\Cc}$ to simple complex Lie algebras $\gfr_h=\gfr(\Delta_h)$ with an $\mathrm{ADE}$-Dynkin diagram $\Delta_h$ and a lift of $\Cc\subset \mathrm{Aut}(\Delta_h)$ to outer automorphisms of $\gfr_h$. 
The same applies to simple complex Lie groups. 

In singularity theory, Slodowy \cite{Slo} used folding to \emph{define} a 
$\Delta$-singularity of a surface for any irreducible 
Dynkin diagram $\Delta$, thereby generalizing $\mathrm{ADE}$-surface singularities. 
The idea is as before: if $\Delta=\Delta_{h,\Cc}$, then a $\Delta$-singularity 
is a $\Delta_h$-singularity $Y$ together with an appropriate lift of $\Cc$ to $\mathrm{Aut}(Y)$.
The dual graph of the exceptional divisor of the minimal resolution of $Y$ coincides with $\Delta_h$. 
Then the lift of the $\Cc$-action to $\mathrm{Aut}(Y)$ induces a $\Cc$-action on $\Delta_h$ which is required to agree with the original $\Cc$-action on $\Delta_h$ by graph automorphisms.
\\

Since the $\mathrm{ADE}$-classification through simply-laced Dynkin
diagrams is ubiquitous in mathematics, it is natural to expect applications of
folding in other situations as well. Indeed, the $\mathrm{ADE}$-surface singularities
have been directly linked to $\mathrm{ADE}$-Hitchin integrable systems
by the second author with Diaconescu and Pantev in \cite{DDP}. Building
on results of Szendr\H oi's \cite{Sz1}, to each $\mathrm{ADE}$-surface singularity one associates a family of non-compact Calabi--Yau threefolds obtained from the semi-universal $\C^\ast$-deformation of the singularity, whose Griffiths' intermediate Jacobians are compact and together form an algebraic integrable system. The latter is called a \emph{Calabi--Yau integrable system},
and it generalizes integrable systems construced  in \cite{DM1} by the second author with Markman
from compact Calabi--Yau threefolds. According to \cite{DDP}, 
the Calabi--Yau integrable system obtained from an $\mathrm{ADE}$-surface
singularity is isomorphic to an $\mathrm{ADE}$-Hitchin integrable system.

Motivated by these results, the first aim of this article is to develop folding of Hitchin systems.
More specifically, let $H$ denote a reductive complex Lie group and $\mh(\cu,H)$ 
the neutral component of the moduli space of semistable $H$-Higgs bundles 
over a compact Riemann surface $\cu$ of genus 
\fontdimen16\textfont2=4pt
$g_\cu\geq 2$. 
\fontdimen16\textfont2=2.5pt
The smooth locus $\mh(\cu,H)^{sm}\subset \mh(\cu,H)$ possesses a holomorphic symplectic structure $\omega_H$ and carries the structure of an algebraic integrable system
\begin{equation*}
\begin{tikzcd}
\bm{\chi}\colon (\mh(\cu,H)^{sm},\omega_H)\to \Bb(\cu, H),
\end{tikzcd}
\end{equation*}
the $H$-Hitchin system. 
The Hitchin map $\bm{\chi}$ is a global analogue of the adjoint quotient 
\begin{equation}\label{eq:introadjoint}
	\chi\colon \mathfrak{h}\to \mathfrak{t}/W
\end{equation}
where $\mathfrak{h}$ is the Lie algebra of $H$ and $\tfr\subset \mathfrak{h}$ is a Cartan subalgebra with Weyl group $W$. 
Then the Hitchin base $\Bb(\cu,H)$ is given by $H^0(\cu,K_\cu\times_{\C^*}\tfr/W)$, where the canonical bundle $K_\cu$ of $\cu$ is considered as a $\C^*$-bundle and $\tfr/W$ is equipped with its natural $\C^*$-action. 

By folding of Hitchin systems, we mean the following: 
let $\Delta=\Delta_{h,\Cc}$ be an irreducible folded Dynkin diagram
and let $G$ and $G_h$ be the simple complex Lie groups 
of adjoint type with Dynkin diagrams $\Delta$ and $\Delta_h$ respectively. 
Then $\Cc$ lifts to a group of outer automorphisms of $G_h$ 
(see e.g. \cite[\S 10.3]{Springer}) which in turn induces a $\Cc$-action on $\mh(\cu,G_h)$. 
Our first main result is 
\begin{introthm}{A}[Theorem \ref{thm:hitchinfibers}]\label{thm:intro1}
	Over a Zariski-open and dense subset $\Bb^\circ\subset \Bb(\cu,G)$, the restriction $\mh(\cu, G)_{|\Bbo}$ of $\mh(\cu, G)$ is isomorphic to $\mh(\cu,G_h)_{|\Bbo}^\Cc$ as an algebraic integrable system.
\end{introthm}
The part of the Theorem that isomorphically identifies 
$\Bb^{\circ}$ with an open subset of $\Bb(\cu, G_h)^\Cc$ 
is more or less covered by the known commutative diagram
\begin{equation}\label{eq:introadjointquot}
\begin{tikzcd}
\gfr\ar[r,"\cong"] \ar[d,"\chi"] & \gfr_h^\Cc \ar[d, "\chi_h"]  \\
\tfr/W \ar[r, "\cong"] & (\tfr_h/W_h)^\Cc
\end{tikzcd}
\end{equation}
where all $\Cc$-actions are induced from the $\Cc$-action on $\Delta_h$ (see for example \cite[\S 8.8]{Slo} and our Section \ref{ss:foldingLAs}).
Theorem \ref{thm:intro1} is then a global version of \eqref{eq:introadjoint}.
The key new ingredient is the relation between cameral covers of $G$ and $G_h$, 
which we work out in Proposition \ref{p:foldingcameral} in Section \ref{ss:foldingcameral}.

In Section \ref{s:crepantresol}, we turn to the above-mentioned Calabi--Yau integrable systems associated to Hitchin systems \cite{DDP,Beck,Beck2}.
More specifically, the first author in \cite{Beck2} showed that the outer automorphism group $\Cc$ acts on the family $\mathcal{X}\to \Bb(\cu,G)$ of quasi-projective Gorenstein threefolds which is isomorphic to the restriction of the family $\X_h\longrightarrow \Bb(\cu, G_h)$ from \cite{DDP} to $\Bb(\cu, G_h)^\Cc\cong\Bb(\cu,G)$.
Moreover, the canonical class of these threefolds is $\Cc$-trivializable, thereby inducing the family $[\mathcal{X}/\Cc]\to \Bb(\cu,G)$ of Calabi--Yau orbifold stacks.
In \cite{Beck2}, the first author further showed that over an open and Zariski-dense subset $\Bbo\subset \Bb(\cu,G)$
with $\X^\circ:=\X_{|\Bb^\circ}$ the associated intermediate Jacobian fibration
	\begin{equation}\label{J2C}
	\begin{gathered}
	J^2_{\Cc}(\X^\circ/\Bb^\circ )\to \Bbo, 
	\\
	J^2_{\Cc}(X_b)=H^3(X_b,\C)^\Cc/(F^2H^3(X_b,\C)^\Cc+H^3(X_b,\Z)^\Cc),\; b\in \Bbo,
	\end{gathered}
	\end{equation}
is a Calabi--Yau integrable system.
The group $H^3(X_b,\Z)^\Cc$ of $\Cc$-invariants is isomorphic to 
the third integral equivariant cohomology group $H^3_{\Cc}(X_b,\Z)$ \cite[Proposition 4]{Beck2},
which is the third singular cohomology group $H^3([X_b/\Cc],\Z)$ of the orbifold stack 
$[X_b/\Cc]$, see \cite{Behrend-stacks}.
The main result of \cite{Beck} is the construction of an isomorphism 
\begin{equation}\label{eq:introiso}
J^2_{\Cc}(\X^\circ/\Bb^\circ)\cong \mh(\cu,G)_{|\Bbo}
\end{equation}
of algebraic integrable systems over $\Bbo$, see \cite[Theorem 5.2.1]{Beck}.
This generalizes \cite[Theorem 3]{DDP} where $\Cc=\{\id\}$, i.e. $G$ has an $\mathrm{ADE}$-Dynkin diagram.

Our second main result concerns crepant resolutions of the quotient varieties 
$X_b/\Cc$ associated to $[X_b/\Cc]$, $b\in \Bbo$. One might expect 
that, if they exist, their intermediate Jacobian fibrations 
are isomorphic to $J^2_{\Cc}(X_b)$  and hence to Hitchin fibers. 
However, we show: 
\begin{introthm}{B}[Proposition \ref{prop:simultaneous} and
Corollary \ref{cor:JZ}]\label{thm:intro2}
Let $\Cc \subset \mathrm{Aut}(\Delta_h)$ be a cyclic subgroup generated by an automorphism $a$ of order $|a|>0$ such that $\Delta=\Delta_{h,\Cc}$.
Then the family $\X^\circ/\Cc\to \Bbo$ admits a \emph{simultaneous crepant resolution} $\mathcal{Z}\to \Bbo$. 
Moreover, there is an isogeny
\begin{equation*}
J^2(\mathcal{Z}/\Bb^\circ) 
\simeq J^2_\Cc(\X^\circ/\Bb^\circ)\times J(\mathcal{X}_{|\Bbo}^\Cc)^{|a|-1}
\end{equation*}
over $\Bbo$.
Here $J(\mathcal{X}_{|\Bbo}^\Cc)\to \Bbo$ is the family of Jacobians of the fixed point loci $X_b^\Cc\subset X_b$, $b\in \Bbo$. 
\end{introthm}
We point out that the fixed point loci $X_b^\Cc$, $b\in \Bbo$, are branched coverings of $\cu$ and therefore have genus strictly larger than 
\fontdimen16\textfont2=4pt
$g_\cu$. 
\fontdimen16\textfont2=2.5pt
Because of \eqref{eq:introiso}, $J^2(\mathcal{Z}/\Bb^\circ)$ contains the $G$-Hitchin system $\mh(\cu,G)_{|\Bbo}$ over $\Bbo$ (up to isogeny), but this inclusion is proper.
Therefore $J^2(\mathcal{Z}/\Bb^\circ)$ cannot be an algebraic integrable system over $\Bbo$ for dimensional reasons.
It would be interesting to extend $J^2(\mathcal{Z}/\Bb^\circ)\to \Bbo$ to an algebraic integrable system over a larger base, a problem which Krichever solved for a special case  in a different context \cite{Kr05}.
\\

The proofs of Theorems \ref{thm:intro1} and \ref{thm:intro2} rely on a precise understanding of folding in Lie theory and in singularity theory. 
Since we were not able to locate a coherent account thereof in the literature, we provide one for the convenience of the reader. 
As they may have noticed, there are in fact two ways of 
folding an $\mathrm{ADE}$-Dynkin diagram $\Delta_h$ according to a cyclic subgroup $\Cc\subset \mathrm{Aut}(\Delta_h)$:
one corresponds to taking $\Cc$-coinvariants in the root system 
corresponding to $\Delta_h$, resulting in $\Delta=\Delta_{h,\Cc}$ 
(as depicted in Figure \ref{Figure1}), the other one to taking $\Cc$-invariants, resulting in 
the dual Dynkin diagram $\Delta^{\!\!\vee}= \Delta_h^\Cc$. 
We explain the interplay between these two procedures in Section \ref{s:folding}.
In particular, Figure \ref{Figure1} is extended to Figure \ref{Figure2} below.

\subsection{Relation to other works}\label{otherwork}
Folding in Lie theory is well-known (see for example \cite[\S 10.3]{Springer}). 
However, we could not locate a concise account in the literature which considers the adjoint quotients \eqref{eq:introadjointquot} as well.

Our construction of the folding of Hitchin systems (Theorem \ref{thm:intro1}) 
is closely related to the ideas of \cite{GPR}, where the action of outer 
automorphisms of $G$ on the entire total space $\mh(\cu,G)$ is considered. 
However, our result makes a statement about the fixed point locus 
relative to the Hitchin base $\Bb(\cu,G)$ which is not considered in \cite{GPR}, cf. Remark \ref{rem:GPR1}. 
In particular, the folding of cameral curves (Section \ref{ss:foldingcameral}) has not been considered before.

The families $\mathcal{X}\to \Bb(\cu,G)$ have been constructed in \cite{Beck2}. 
As pointed out before, \cite{Beck2} works directly with the global orbifold stacks $[X_b/\Cc]$, $b\in \Bbo$, instead of the crepant resolutions of the quotient varieties $X_b/\Cc$.

\begin{figure}[h]
	\scalebox{0.85}{
		\begin{tikzpicture}
		\coordinate (A) at (0,0) {}; 
		\coordinate (B) at (1,0) {}; 
		\coordinate (C) at (2,0) {}; 
		\coordinate (D) at (3,0) {}; 
		\coordinate (E) at (4,0) {}; 
		\coordinate (A5) at (-1.5,.25) {}; 
		\coordinate (Cc) at (-1.5,-.25) {};
		
		\coordinate (F1) at (4.75,.25) {}; 
		\coordinate (F2) at (4.75,-.25) {};
		\coordinate (G1) at (7.25,0.75) {};
		\coordinate (G2) at (7.25,-0.75){};
		\coordinate (H1) at (8,.75) {};
		\coordinate (H2) at (8,-.75) {};
		\coordinate (I1) at (9,.75) {}; 
		\coordinate (I2) at (9,-.75) {}; 
		\coordinate (J1) at (10,.75) {};
		\coordinate (J2) at (10,-.75) {};
		\coordinate (Bhalf) at (1.5, -0.9) {}; 
		\coordinate (Hhalf) at (1.5,-1.45) {}; 
		
		\coordinate (C3) at (12,.75) {}; 
		\coordinate (B3) at (12,-.75) {}; 
		
		\node [fill=black, circle, inner sep=0pt, minimum size=5pt] at (A) {}; 
		\node [fill=black, circle, inner sep=0pt, minimum size=5pt] at (B) {};
		\node [fill=black, circle, inner sep=0pt, minimum size=5pt] at (C) {};
		\node [fill=black, circle, inner sep=0pt, minimum size=5pt] at (D) {};
		\node [fill=black, circle, inner sep=0pt, minimum size=5pt] at (E) {}; 
		\node [fill=black, circle, inner sep=0pt, minimum size=5pt] at (H1) {}; 
		\node [fill=black, circle, inner sep=0pt, minimum size=5pt] at (I1) {}; 
		\node [fill=black, circle, inner sep=0pt, minimum size=5pt] at (J1) {}; 
		\node [fill=black, circle, inner sep=0pt, minimum size=5pt] at (H2) {}; 
		\node [fill=black, circle, inner sep=0pt, minimum size=5pt] at (I2) {}; 
		\node [fill=black, circle, inner sep=0pt, minimum size=5pt] at (J2) {}; 
		\node at (A5) {$\Delta_h=\mathrm{A}_5$}; 
		\node at (B3) {$\Delta^{\!\!\vee}=\Delta_{h}^\Cc=\mathrm{B}_3$}; 
		\node at (C3) {$\Delta=\Delta_{h,\Cc}=\mathrm{C}_3$};
		\node at (Cc) {$\Cc=\Z/2\Z$}; 
		
		\draw (A) -- (B);
		\draw (B) -- (C); 
		\draw (C) -- (D); 
		\draw (C) -- (E); 
		\draw [<->, shorten <= 0.15cm, shorten >=0.15cm] (A) to [in=90, out=90]  (E);
		\draw [<->, shorten <= 0.15cm, shorten >=0.15cm] (B) to [in=90, out=90] (D);
		\draw (H1) -- (I1);
		\draw (H2) -- (I2);
		\draw[<->]  (F1) -- node[midway, sloped, anchor=south] {\footnotesize{$\Cc$-coinvariants}} (G1); 
		\draw[<->] (F2) -- node[midway, sloped, anchor=north] {\footnotesize{$\Cc$-invariants}}  (G2); 
		
		\draw(9,0.71) -- (10,0.71);
		\draw(9,0.79) -- (10,0.79);
		\draw(9,-0.79) -- (10,-0.79);
		\draw(9,-.71) -- (10,-.71);

		\draw
		(9.6,-.75) --++  (130:.2) 
		(9.6,-.75)  --++  (-130:.2);
		
		\draw
		(9.4,.75) --++  (50:.2) 
		(9.4,.75)  --++  (-50:.2);
		
		\end{tikzpicture}}
	\caption{Folding of $\Delta_h=\mathrm{A}_5$ to $\Delta=\Delta_{h,\Cc}=\mathrm{C}_3$ or dually to $\Delta^{\!\!\vee}=\Delta_h^\Cc=\mathrm{B}_3$.}
	\label{Figure2}
\end{figure}

\subsection{Structure}
In Section \ref{s:folding}, we give a concise account of folding in Lie and singularity theory. 
In Section \ref{s:foldinghitchin} we first review Hitchin 
systems and generic Hitchin fibers by introducing cameral curves.
	Then we consider the folding of cameral curves and prove Theorem \ref{thm:intro1}.
	In Section \ref{s:crepantresol} we briefly recall the construction of the families 
	$\mathcal{X}\to \Bb(\cu,G)$, determine their crepant resolutions, and prove Theorem \ref{thm:intro2}. 
Most of our findings are illustrated by a running example to guide the reader. 

\subsection{Notation}\label{notations}
\begin{itemize} 
\item 
$\Delta$: irreducible Dynkin diagram, denoted $\Delta_h$ if required to 
be simply-laced,
that is, `homogeneous', according to \cite[\S6.1]{Slo}.
\item
$R=R(\Delta)$:  root system $R\subset (V,(-,-))$ associated with $\Delta$, 
denoted $R_h\subset (V_h, (-,-))$ if $\Delta=\Delta_h$; we identify $\Delta$ 
with a choice of simple roots in $R$
\item
$\Delta^{\!\!\vee}$: irreducible Dynkin diagram of the coroot system $R^{\vee}$ associated to $R=R(\Delta)$;
we view $R^\vee$ as a root system in $V^\ast$, throughout.
\item 
($\Delta_h,\Cc)$: pair consisting of the irreducible simply-laced 
Dynkin diagram $\Delta_h$ and a cyclic subgroup $\Cc:=\langle a \rangle$ of the group $\mathrm{Aut}_D(\Delta_h)$ of Dynkin graph automorphisms\footnote{These are graph automorphisms $a$\label{graphauto} such that $(a(\alpha),\alpha)\in\{0,2\}$ for each root $\alpha\in R_h$. 
Note that with this terminology, Dynkin diagrams of type $\Delta_h=\mathrm{A_{2n}}$ have no 
non-trivial Dynkin graph automorphisms, $\mathrm{Aut}_D(\Delta_h)=\{\mathrm{id}\}$,
while  $\mathrm{Aut}(\Delta_h)\cong\Z_2$. 
Moreover, $\Cc\cong\{\id\}$ or $\Cc\cong\Z_2$ or $\Cc\cong\Z_3$.} 
of $\Delta_h$ such that $\Delta^{\!\!\vee}=\Delta_h^{\Cc}$, cf. Section \ref{ss:foldroot}.
\item
$\gfr=\gfr(\Delta)=\gfr(R)$: simple complex Lie algebra associated with the irreducible Dynkin diagram $\Delta$ and root system $R$, denoted $\gfr_h$ if $\Delta=\Delta_h$. 
\end{itemize}

\subsection{Acknowledgments}
Florian Beck thanks Lara Anderson and Laura Schaposnik for an invitation 
to the Simons Center for Geometry and Physics, Stony Brook, where part of 
this research was conducted, and kindly acknowledges the funding by the DFG Emmy Noether grant AL 1407/2-1.
During the preparation of this work, Ron Donagi was supported in part by NSF grant 
DMS 2001673 and by Simons HMS Collaboration grant \# 390287.
Katrin Wendland thanks Igor Krichever, Motohico Mulase and Kenji Ueno for very 
helpful discussions.
The authors thank the Simons Center for Geometry and Physics, Stony Brook,
as well as the Max-Planck-Institute for Mathematics, Bonn, for their hospitality, 
as some of the work presented here was carried out during our visits to these institutions.

\section{Folding in Lie and singularity theory}\label{s:folding}
In this section, we provide the relevant background for folding from a Lie theoretical perspective,
and we introduce  a running example for folding.
\subsection{Folding of root systems}\label{ss:foldroot}
Let $\Delta_h$ be any irreducible Dynkin diagram of type $\mathrm{ADE}$ and $R_h\subset V_h$ the corresponding root system in the Euclidean vector space $(V_h,(-,-))$.
We identify the nodes of $\Delta_h$ with a choice of simple roots in $R_h$. 
Let $\Cc=\langle a \rangle \subset \mathrm{Aut}(\Delta_h)$ be a subgroup generated by a 
Dynkin graph automorphism $a\in \mathrm{Aut}_D(\Delta_h)$ of finite order 
$|a|=\mathrm{ord}(a)$. 
The definition of Dynkin graph automorphisms 
(cf.~footnote \ref{graphauto}) implies, for example, that
$\Cc$ is trivial if $\Delta_h$ is of type $A_n$ with even $n$.
The $\Cc$-coinvariants in $R_h$ are defined by 
\begin{equation} 
	R_{h,\Cc}:=R_h/(1-a)R_h,\quad R_{h,\Cc}\subset V_{h,\Cc}\mbox{ with } V_{h,\Cc}:=V_h/(1-a)V_h.
\end{equation}
We denote the class of $\alpha\in R_h$ in $R_{h,\Cc}$ by $\alpha_O$.
The following is a standard result (see, for example, \cite[\S 10.3]{Springer}, 
\cite{Stembridge-Folding}):
\begin{lem}\label{lem:folding}
	The coinvariants $R_{h,\Cc}$ define an irreducible root system in $V_{h,\Cc}=V_h/(1-a)V_h$. 
	Its irreducible Dynkin diagram $\Delta=\Delta_{h,\Cc}$ is determined by the following table
	\begin{center}
		\begin{tabular}{l c l }
			\toprule
			$\Delta_h$ & $\mathrm{ord}(a)$ & $\Delta=\Delta_{h,\Cc}$ \\ \midrule \addlinespace
			$\mathrm{A}_{2n-1}$ & $2$ & $\mathrm{C}_n$ $(n\geq 2)$  \\
			$\mathrm{D}_{n+1}$ & $2$ & $\mathrm{B}_n$ $(n\geq 3)$  \\
			$\mathrm{D}_4$ & $3$ & $\mathrm{G}_2$ \\
			$\mathrm{E}_6$ & $2$ & $\mathrm{F}_{\!4}$. \\
			\bottomrule
			\end{tabular} 
	\end{center}
	In particular, this establishes a one-to-one correspondence between irreducible Dynkin diagrams $\Delta$ and pairs $(\Delta_h,\Cc)$ of irreducible $\mathrm{ADE}$-Dynkin diagrams and subgroups $\Cc=\langle a \rangle\subset \mathrm{Aut}(\Delta_h)$ generated by a Dynkin graph automorphism (see footnote \ref{graphauto} for the definition). 
	We say that $\Delta$ is obtained from $(\Delta_h,\Cc)$ through folding. 
\end{lem}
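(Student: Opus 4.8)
The plan is to verify the statement by a case-by-case analysis over the four rows of the table, using the general structural facts about $\Cc$-coinvariants of root systems. First I would recall the basic setup: since $a$ is a Dynkin graph automorphism of the simply-laced diagram $\Delta_h$, it acts on $V_h$ by an isometry permuting the simple roots, and $R_h$ consists of vectors all of the same length (normalized, say, to $(\alpha,\alpha)=2$). The quotient map $\pi\colon V_h\to V_{h,\Cc}=V_h/(1-a)V_h$ can be identified with the orthogonal projection onto the fixed subspace $V_h^{\Cc}$, because $(1-a)V_h$ is the orthogonal complement of $V_h^{\Cc}$ for an $a$-invariant inner product; thus $V_{h,\Cc}$ inherits an inner product, and $\pi(\alpha)=\frac{1}{|\Cc\cdot\alpha|}\sum_{\beta\in\Cc\cdot\alpha}\beta$ is the average over the orbit of $\alpha$. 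The images $\alpha_O=\pi(\alpha)$ for $\alpha$ running over the simple roots of $\Delta_h$ fall into orbits; picking one representative per orbit gives a set $\Pi$ of vectors in $V_{h,\Cc}$ indexed by the nodes of the folded graph.

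Next I would show that $R_{h,\Cc}:=\pi(R_h)$ together with $\Pi$ satisfies the axioms of a root system with $\Pi$ as simple roots. The key points are: (i) $\Pi$ spans $V_{h,\Cc}$ and is linearly independent (it is the image of a basis of $V_h^{\Cc}$ under the identification above, since distinct orbits of simple roots project to distinct, independent vectors — this uses that $\Delta_h$ is a tree and $a$ has no edge-inversions, a consequence of the Dynkin graph automorphism condition in footnote~\ref{graphauto}); (ii) the inner products $(\alpha_O,\beta_O)$ between these projected simple roots are computed directly from the orbit structure and reproduce the Cartan matrix of the target diagram in the table — here one simply counts, for each pair of orbits, how many edges of $\Delta_h$ connect them and of what type, which is a finite check in each of the four cases (A$_{2n-1}$, D$_{n+1}$, D$_4$, E$_6$); (iii) every element of $\pi(R_h)$ is an integer combination of $\Pi$ with coefficients of one sign, which follows because $\pi$ is linear and $\Cc$-equivariant and $R_h$ has this property with respect to its own simple roots, and the $\Cc$-action permutes positive roots among themselves. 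Irreducibility of $R_{h,\Cc}$ follows from connectedness of $\Delta_h$, which is preserved under passing to the quotient graph. Computing $|\Pi|$ (the number of $\Cc$-orbits of nodes) confirms the rank: $A_{2n-1}\mapsto$ rank $n$, $D_{n+1}\mapsto$ rank $n$, $D_4\mapsto$ rank $2$, $E_6\mapsto$ rank $4$.

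Having pinned down the Cartan matrix in each case, the identification of $\Delta_{h,\Cc}$ with $\mathrm{C}_n$, $\mathrm{B}_n$, $\mathrm{G}_2$, $\mathrm{F}_4$ respectively is immediate from the classification of irreducible root systems. To get the asserted one-to-one correspondence, I would argue both directions: the table exhibits a surjection $(\Delta_h,\Cc)\mapsto\Delta$ onto the non-simply-laced irreducible diagrams, and it is injective because from $\Delta$ one recovers $(\Delta_h,\Cc)$ — indeed $\Delta_h$ is the "unfolding" reconstructed from the long-root subdiagram together with the multiplicity data encoded in the arrows, and $\Cc$ is the corresponding graph automorphism group; the simply-laced $\Delta$ correspond to $\Cc=\{\id\}$ and $\Delta_h=\Delta$, which the convention in footnote~\ref{graphauto} (where $\mathrm{A}_{2n}$ has $\mathrm{Aut}_D=\{\id\}$) makes consistent. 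I expect the main obstacle to be purely bookkeeping: carefully justifying that the projected simple roots are linearly independent and that the orbit-counting correctly yields the off-diagonal Cartan entries, especially in the $\mathrm{D}_4\mapsto\mathrm{G}_2$ case where a triple edge appears and one must track the ratio of squared lengths $(\alpha_O,\alpha_O)$ across orbits of different sizes. None of this is deep, but it is the step where an error would hide, so I would present the length and angle computation explicitly for one representative case (say $\mathrm{A}_{2n-1}\mapsto\mathrm{C}_n$, which also feeds the running example of Figure~\ref{Figure1}) and indicate that the remaining three are analogous.
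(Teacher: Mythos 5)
First, a point of comparison: the paper does not actually prove Lemma \ref{lem:folding} --- it is stated as a standard result with references to \cite[\S 10.3]{Springer} and \cite{Stembridge-Folding} --- so your direct verification is not a ``different route'' so much as a reconstruction of the standard argument from those sources. Your architecture is the right one: identify $V_{h,\Cc}=V_h/(1-a)V_h$ with $V_h^\Cc$ via orthogonal projection $\pi$ (valid since $(1-a)V_h$ is the orthogonal complement of $\ker(1-a)$ for an isometry of finite order), read off $\pi(\alpha)$ as the orbit average, compute the inner products of the projected simple roots orbit by orbit, and match the resulting Cartan matrix against the classification.

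There is, however, one genuine gap. Your items (i)--(iii) show that $\Pi$ is a linearly independent spanning set with the correct Cartan matrix and that $\pi(R_h)\subset \Z_{\geq 0}\Pi\cup\Z_{\leq 0}\Pi$, but none of this shows that $\pi(R_h)$ is \emph{closed under the reflections} $s_{\pi(\alpha)}$, nor that it is \emph{reduced}; these are the substantive root-system axioms, and they are precisely where the hypothesis that $a$ is a Dynkin graph automorphism (footnote \ref{graphauto}) enters. The cautionary example is $\mathrm{A}_{2n}$ with its nontrivial diagram involution: your (i)--(iii) all go through formally there, yet the coinvariants form the non-reduced system $\mathrm{BC}_n$. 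The missing step is exactly the identity \eqref{eq:weylfolding}: the condition $(a(\alpha),\alpha)\in\{0,2\}$ forces the roots in each orbit $O(\alpha)$ to be mutually orthogonal, so $\prod_{\gamma\in O(\alpha)}s_\gamma\in W_h$ is a well-defined involution commuting with $a$; it therefore preserves $V_h^\Cc$, commutes with $\pi$, and restricts on $V_h^\Cc$ to the reflection in $\pi(\alpha)$. Hence $s_{\pi(\alpha)}(\pi(R_h))=\pi\bigl(\prod_{\gamma\in O(\alpha)}s_\gamma(R_h)\bigr)=\pi(R_h)$, which gives closure, and the length computation (orbit averages have squared length $2/|O(\alpha)|$) rules out proportional pairs, giving reducedness and completing the identification of $\pi(R_h)$ with the tabulated type. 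Relatedly, your appeal to ``no edge-inversions'' is placed at the wrong step: linear independence of $\Pi$ already follows from the fact that orbit sums of a permuted basis form a basis of the invariants, whereas the orthogonality-within-orbits condition is what the reflection and reducedness arguments genuinely require.
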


The dual version of folding replaces $\Cc$-coinvariants by $\Cc$-invariants and thereby interchanges short and long roots.
More precisely, let $O(\alpha)$ be the orbit of $\alpha\in R_h$ under the action of $\Cc=\langle a \rangle$. 
Then we define
\begin{equation}\label{eq:alphaO} 
\alpha^O:=\sum_{\alpha^\prime\in O(\alpha)} \alpha^\prime.
\end{equation}
Note that the sum does not involve multiplicities, so in particular, $a\cdot \alpha=\alpha$ implies $\alpha^O=\alpha$. 
By construction, we have 
\begin{equation}\label{eq:Ra}
R_h^\Cc=\{ \alpha^O ~|~\alpha\in R_h\} \subset V_h^\Cc.
\end{equation}

\begin{lem}\label{lem:foldingroots}
	If $\Delta=\Delta_{h,\Cc}$ as in Lemma \ref{lem:folding}, then $R_{h}^\Cc\subset V_h^\Cc$ is an irreducible root system with Dynkin diagram $\Delta^{\!\!\vee}$, that is, $\Delta^{\!\!\vee}=\Delta_h^\Cc$.
	Moreover, if for each $\alpha_h\in R_h$ we define $\alpha^\vee \in V_h^*$ by
\begin{equation}\label{dual}
\forall v\in V_h\colon\qquad \alpha^{\vee}(v) := 2\frac{ (\alpha,v) }{ (\alpha,\alpha) },
\end{equation}
then the map
	\begin{equation*}
	(R_{h,\Cc})^\vee \longrightarrow (R_h^\vee)^\Cc,\quad (\alpha_O)^\vee\mapsto (\alpha^\vee)^O
	\end{equation*}
	is an isomorphism of root systems in $(V_{h,\Cc})^* \cong (V_h^*)^\Cc$.
	In other words, $(\Delta_{h,\Cc})^\vee\cong(\Delta_h^{\!\!\vee})^\Cc$.
	\end{lem}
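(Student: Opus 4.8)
The plan is to prove the two assertions of Lemma \ref{lem:foldingroots} in sequence, the first following the same pattern as Lemma \ref{lem:folding} but with invariants replacing coinvariants, the second being a bookkeeping statement about how the duality $\alpha\mapsto\alpha^\vee$ intertwines the two folding operations.

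\medskip\noindent\textbf{Step 1: $R_h^\Cc$ is an irreducible root system with diagram $\Delta^{\!\!\vee}$.} First I would observe that the averaging/summing construction $\alpha\mapsto\alpha^O$ and the inclusion $R_h^\Cc\subset V_h^\Cc$ are compatible with the decomposition of $V_h$ into $\Cc$-isotypic pieces; since $\Cc$ is cyclic of order $\le 3$ acting by a Dynkin graph automorphism, $V_h = V_h^\Cc \oplus (1-a)V_h$ orthogonally, and $V_h^\Cc \cong V_{h,\Cc}$ via the projection. The simple roots of $R_h^\Cc$ are the classes $\alpha_i^O$ as $\alpha_i$ runs over $\Cc$-orbits of nodes of $\Delta_h$; one checks that these are linearly independent, that their pairwise inner products have the correct sign, and that the Cartan integers one obtains match those of $\Delta^{\!\!\vee}$ from the table in Lemma \ref{lem:folding} (with long and short interchanged relative to $\Delta_{h,\Cc}$). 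This is the same finite case-by-case verification underlying Lemma \ref{lem:folding}, now for invariants; I would either cite \cite[\S10.3]{Springer}, \cite{Stembridge-Folding} directly or remark that it follows by the same argument, since $R_{h,\Cc}$ and $R_h^\Cc$ are related by rescaling within each $\Cc$-orbit.

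\medskip\noindent\textbf{Step 2: the map $(\alpha_O)^\vee \mapsto (\alpha^\vee)^O$ is well-defined and an isomorphism.} The canonical pairing identifies $(V_{h,\Cc})^* = (V_h^*)^\Cc$ (the dual of a quotient is the annihilator of $(1-a)V_h$, which equals the fixed subspace since $a$ is finite order and semisimple), and I would fix this identification at the outset. Next I would compute, for $\alpha_h\in R_h$, the coroot of the class $\alpha_O\in R_{h,\Cc}$: using that the inner product on $V_{h,\Cc}$ is induced from $(-,-)$ on $V_h^\Cc\cong V_{h,\Cc}$ and that $\alpha_O$ corresponds to $\frac{1}{|O(\alpha)|}\alpha^O$ under the projection (the orbit sum divided by orbit size is the $\Cc$-average, which is the component of $\alpha$ in $V_h^\Cc$), one gets that $(\alpha_O)^\vee$ is, up to the scalar $|O(\alpha)|$, the restriction to $V_h^\Cc$ of $\alpha^\vee$. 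On the other hand, since $\alpha^\vee$ is proportional to $\alpha$ (via the self-duality of $V_h$ coming from $(-,-)$) and the $\Cc$-action is orthogonal, the orbit $O(\alpha^\vee)$ in $R_h^\vee$ corresponds to $O(\alpha)$, so $(\alpha^\vee)^O = \sum_{\beta\in O(\alpha)}\beta^\vee$ is again, up to the same scalar $|O(\alpha)|$, the restriction of $\alpha^\vee$ to $V_h^\Cc$. Hence the two expressions agree after matching normalizations, which shows the map is well-defined on root-system elements and sends simple coroots to simple coroots; that it is a bijection respecting Cartan integers then follows from Step 1 applied to $R_h^\vee$ (which is again $\mathrm{ADE}$, so its folding is governed by the same table), giving $(\Delta_{h,\Cc})^\vee \cong (\Delta_h^\vee)^\Cc$.

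\medskip\noindent\textbf{Main obstacle.} The routine case-check in Step 1 is not the real difficulty; the delicate point is Step 2, namely keeping the normalization constants straight. The operation $\alpha\mapsto\alpha_O$ is a quotient (so it remembers orbit \emph{averages}), whereas $\alpha\mapsto\alpha^O$ is an orbit \emph{sum}, and the coroot map $\alpha\mapsto\alpha^\vee$ rescales by $2/(\alpha,\alpha)$, which differs between the long and short roots that get swapped under folding versus dual folding. I expect the core of the argument to be a short explicit computation on a single $\Cc$-orbit showing that all these factors of $|O(\alpha)|$ and $(\alpha,\alpha)$ cancel precisely so that $(\alpha_O)^\vee$ and $(\alpha^\vee)^O$ define the same functional on $V_h^\Cc$; once that identity is in hand, everything else is formal. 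I would make this cancellation transparent by treating the two $\Cc$-orbit types (fixed nodes, and free orbits of size $|a|$) separately, since for a fixed node $\alpha^O=\alpha$ and $|O(\alpha)|=1$ so the identity is trivial, and for a free orbit one only needs that the roots in the orbit are mutually orthogonal of equal length — which is exactly the Dynkin graph automorphism condition of footnote \ref{graphauto}.
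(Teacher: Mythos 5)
Your proposal is correct, and in fact it supplies more than the paper does: the paper states Lemma \ref{lem:foldingroots} without proof, treating it (like Lemma \ref{lem:folding}) as standard and implicitly covered by the references \cite[\S 10.3]{Springer} and \cite{Stembridge-Folding}. Your Step 2 is the substantive part, and the normalization bookkeeping you flag as the main obstacle does work out exactly as you predict: identifying $V_{h,\Cc}\cong V_h^\Cc$ via orthogonal projection sends $\alpha_O$ to $\tfrac{1}{k}\alpha^O$ with $k=|O(\alpha)|$, the mutual orthogonality and common length $(\alpha',\alpha')=2$ of the roots in a $\Cc$-orbit (the Dynkin graph automorphism condition of footnote \ref{graphauto}) give $(\alpha_O,\alpha_O)=2/k$, and hence both $(\alpha_O)^\vee$ and $(\alpha^\vee)^O$ evaluate to $v\mapsto(\alpha^O,v)$ on $V_h^\Cc$ — they agree on the nose, not merely up to a scalar. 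One small point worth making explicit: well-definedness of the assignment $\alpha_O\mapsto\alpha^O$ requires knowing that $\alpha_O=\beta_O$ forces $\alpha^O=\beta^O$; this follows from your own length computation, since $|\tfrac{1}{k}\alpha^O|^2=2/k$ pins down the orbit size from the class. Your Step 1 correctly defers the finite case check to the same sources as Lemma \ref{lem:folding}, and the exclusion of $\mathrm{A}_{2n}$ (where orbit sums fail to give a reduced root system) is already built into the paper's standing hypothesis that $a$ is a Dynkin graph automorphism.
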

Since in our conventions, $\Delta_h$ is simply-laced, we have $\Delta_h^{\!\!\vee}\cong \Delta_h$, so Lemma \ref{lem:foldingroots} also implies $\Delta\cong(\Delta^\Cc_h)^\vee$.
\begin{ex}\label{Ex:roots}
As a running example throughout this work, we present the folding of $\Delta_h=\mathrm{A}_3$ to $\Delta=\mathrm{C}_2$.

On the level of root systems, for $\Delta_h=\mathrm{A}_3$ we choose a  basis $(\alpha_1,\,\alpha_2,\,\alpha_3)$
of simple roots  with 
$$
(\alpha_j,\alpha_j) = 2 \mbox{ for } j\in\{1,\,2,\,3\}, \qquad
(\alpha_1,\alpha_2) = (\alpha_2,\alpha_3) = -1, \qquad
(\alpha_1,\alpha_3) = 0.
$$
To fold $\Delta_h=\mathrm{A}_3$ to $\Delta=\mathrm{C}_2$, we must implement the
automorphism $a$ that is induced by $\alpha_1\leftrightarrow\alpha_3$.
Since $\Delta\cong (\Delta_h^\Cc)^{\vee}$, we may work with the basis 
$(\alpha_1^\vee,\,\alpha_2^\vee,\,\alpha_3^\vee)$ of $V_h^\ast$
which is dual to $(\alpha_1,\,\alpha_2,\,\alpha_3)$ under \eqref{dual}, such that $\Delta$
corresponds to a coroot system with simple basis $(\tfrac{1}{2}(\alpha_1^\vee+\alpha_3^\vee),\,\alpha_2^\vee)$.
\end{ex}

\subsection{Folding and Cartan subalgebras} \label{sec:foldingCartan}
As before, let $R=R_{h,\Cc}$ be a folded root system, i.e. 
$R\subset V$, and $R^\vee\subset V^\ast$ its coroot system with $V=V_{h,\Cc}$  and $V^\ast=(V_h^*)^\Cc$.
They have Dynkin diagrams $\Delta=\Delta_{h,\Cc}$ and 
$\Delta^{\!\!\vee}= \Delta_h^\Cc$, respectively.

We next work out the relation between the action of the Weyl group 
$W_h=W(R_{h}^\vee)$ of 
$R_h^\vee\subset V_h^*$ and
the Weyl group 
$W=W(R^\vee)$ of
$R^\vee\subset V^*$
on the Cartan tori 
$\tfr_h=V_h^\ast\otimes_{\R}\C$ and $\tfr=\tfr_h^\Cc$, 
respectively:
we may identify $W$ as a subgroup of $W_h$ and thereby 
$\tfr/W$ with the $\Cc$-invariants in
$\tfr_h/W_h$. This statement is known, see e.g. 
\cite[\S8, Remarks, p.\ 144]{Slo},
however we find it useful to present a self-contained proof. 
Indeed, first we note

\begin{prop}\label{p:foldingweyl}
Let $W_h^{\Cc}:=\{ w\in W_h~|~aw=wa\}\subset \mathrm{Aut}(V^*_h)$. 
Then the restriction to $V^*= (V_h^*)^\Cc\subset V_h^*$,
\begin{equation}\label{eq:resW}
 W_h^{\Cc}\longrightarrow \mathrm{Hom}(V^*, V_h^*), \quad w\mapsto w_{|V^*}
\end{equation}
induces an isomorphism of groups $W_h^\Cc\cong W$. 
\end{prop}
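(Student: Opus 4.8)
The plan is to produce a map $W \to W_h^\Cc$ inverse to the restriction map \eqref{eq:resW}, by sending each simple reflection of $W$ to a canonical element of $W_h^\Cc$, and then check that restriction and this lift are mutually inverse. Concretely, recall that $W = W(R^\vee)$ is generated by the reflections $s_{\beta}$ for $\beta$ a simple root of $R = R_{h,\Cc}$, equivalently for $\beta^\vee$ a simple coroot; under folding each such simple root is the class $\alpha_O$ of a simple root $\alpha$ of $R_h$, and its $\Cc$-orbit $O(\alpha) = \{\alpha, a\alpha, \dots, a^{k-1}\alpha\}$ consists of mutually orthogonal simple roots of $R_h$ (orthogonality is exactly the defining property of a Dynkin graph automorphism, footnote \ref{graphauto}). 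First I would define, for each simple root $\alpha$ of $R_h$, the element
\begin{equation*}
w_\alpha := \prod_{\alpha' \in O(\alpha)} s_{\alpha'} \in W_h,
\end{equation*}
which is well-defined (the factors commute by orthogonality) and manifestly commutes with $a$ since $a$ permutes $O(\alpha)$; hence $w_\alpha \in W_h^\Cc$. I would then show that $w_\alpha$, restricted to $V_h^\Cc$, acts as the reflection $s_{\alpha_O}$ in the root $\alpha_O \in R$: for $v \in V_h^\Cc$ one computes $w_\alpha(v) = v - \sum_{\alpha' \in O(\alpha)} \langle \alpha'^\vee, v\rangle \alpha'$, and since $v$ is $\Cc$-invariant the pairings $\langle \alpha'^\vee, v\rangle$ are all equal to $\langle \alpha^\vee, v\rangle$ (using $\langle a\alpha^\vee, v\rangle = \langle \alpha^\vee, a^{-1}v\rangle = \langle \alpha^\vee, v\rangle$), so the sum collapses to $\langle \alpha^\vee, v\rangle \, \alpha^O$, which under the identification $V^* \cong (V_h^*)^\Cc$ and its dual $V \cong V_{h,\Cc}$ is precisely the reflection formula for $s_{\alpha_O}$ using Lemma \ref{lem:foldingroots} (the coroot of $\alpha_O$ corresponds to $(\alpha^\vee)^O$). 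This shows the restriction map \eqref{eq:resW} is surjective onto $W$, and that the assignment $s_{\alpha_O} \mapsto w_\alpha$ descends to a well-defined section provided the $w_\alpha$ satisfy the Coxeter braid relations of $W$.

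Next I would establish injectivity of \eqref{eq:resW}. An element $w \in W_h^\Cc$ with $w_{|V^*} = \id$ fixes $(V_h^*)^\Cc$ pointwise; since $W_h$ acts on $V_h^*$ without fixed vectors outside proper subspaces in a controlled way, and since $w$ commutes with $a$, one argues that $w$ must be trivial. The cleanest route: if $w$ fixes $V^* = (V_h^*)^\Cc$ pointwise, then in particular $w$ fixes a regular element of $V^*$ (a generic $\Cc$-invariant vector is regular in $V_h^*$ — this is where one uses that the root subsystem of $R_h$ orthogonal to $(V_h^*)^\Cc$ is empty, equivalently that no root of $R_h$ restricts to zero on $V_h^\Cc$, which follows from the explicit folding table or from the fact that $a$ has no fixed points that would produce such roots). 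A reflection group element fixing a regular vector is the identity, so $w = \id$, giving injectivity. Combined with surjectivity this proves $W_h^\Cc \cong W$.

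The main obstacle I anticipate is the surjectivity/section step, specifically verifying that the elements $w_\alpha$ generate all of $W_h^\Cc$ (not merely a subgroup isomorphic to $W$) and satisfy exactly the braid relations of the folded Coxeter system. For the braid relations, the efficient argument avoids checking them directly: one instead notes that $w \mapsto w_{|V^*}$ maps the subgroup $\langle w_\alpha \rangle \subseteq W_h^\Cc$ onto $W$ (just shown), so it suffices to see this map is injective on $W_h^\Cc$ (the injectivity step above), which forces $\langle w_\alpha\rangle = W_h^\Cc$ and the isomorphism. Thus the two steps — the explicit reflection computation and the regular-vector injectivity argument — together close the proof, and the only genuinely delicate point is the claim that a generic $\Cc$-fixed vector in $V_h^*$ is $W_h$-regular, which I would justify by exhibiting, for each line of the folding table, that the centralizer computation is consistent, or more uniformly by the observation that $\dim (V_h^*)^\Cc = \operatorname{rank} R = \operatorname{rank} R^\vee$ equals the rank of the folded system so there is "enough room" and no root of $R_h$ can vanish identically on $(V_h^*)^\Cc$.
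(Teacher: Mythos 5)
Your construction of the lifts $w_\alpha=\prod_{\alpha'\in O(\alpha)}s_{\alpha'}$ and the computation showing $(w_\alpha)_{|V^*}=s_{\alpha_O}$ coincide with the paper's surjectivity argument (formula \eqref{eq:weylfolding}), and your injectivity argument via a regular $\Cc$-invariant fixed vector is a correct variant of the paper's, which simply exhibits such a vector explicitly: since $R^\vee=(R_h^\vee)^\Cc$ consists of orbit sums, the Weyl vector of $R_h^\vee$ equals that of $R^\vee$ and lies in the open fundamental chamber, so any $w\in W_h^\Cc$ acting trivially on $V^*$ fixes a regular vector and is the identity. (Your justification that a generic vector of $(V_h^*)^\Cc$ is $W_h$-regular is vaguer than necessary; the clean reason is that for $\Cc$-invariant $v$ one has $(\alpha,v)=\tfrac{1}{|O(\alpha)|}(\alpha^O,v)$ with $\alpha^O$ a nonzero vector of the invariant subspace and $(-,-)$ positive definite there, so no root of $R_h$ vanishes identically on it.)

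However, there is a genuine gap in how you conclude. You correctly flag the danger that the $w_\alpha$ might generate only a proper subgroup of $W_h^\Cc$, but your proposed resolution --- that injectivity of the restriction on $W_h^\Cc$ ``forces $\langle w_\alpha\rangle=W_h^\Cc$'' --- is a non sequitur. Injectivity together with the fact that $\langle w_\alpha\rangle$ restricts onto $W$ only yields $\langle w_\alpha\rangle\cong W$; it does not exclude an element $w\in W_h^\Cc\setminus\langle w_\alpha\rangle$ whose restriction to $V^*$ lies \emph{outside} $W$. What is missing is the containment $\{\,w_{|V^*}\mid w\in W_h^\Cc\,\}\subseteq W$. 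The paper supplies exactly this at the outset of its proof: any $w\in W_h^\Cc$ permutes $R_h^\vee$ and commutes with $a$, hence permutes the $\Cc$-orbit sums and therefore preserves $R^\vee=(R_h^\vee)^\Cc$; consequently $w_{|V^*}\in\mathrm{Aut}(R^\vee)=W\rtimes\mathrm{Aut}(\Delta^{\!\!\vee})=W$, the last equality holding because the folded, non-simply-laced diagrams $\mathrm{B}_n,\mathrm{C}_n,\mathrm{F}_4,\mathrm{G}_2$ admit no nontrivial graph automorphisms. With this step added your argument closes; without it you have only shown that \emph{some} subgroup of $W_h^\Cc$ is isomorphic to $W$, not that the restriction map itself is an isomorphism onto $W$.
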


\begin{proof}
First observe that by construction, $w\in W_h^{\Cc}$ implies $w_{|V^*}\in \mathrm{Aut}(V^*)$, and $w(R^\vee)=R^\vee$ is immediate by \eqref{eq:alphaO} (also see \eqref{eq:weylfolding} below).
Since $R^\vee$ is a folded root system, 
$\mathrm{Aut}(R^\vee)=W \rtimes \mathrm{Aut}(\Delta^{\!\!\vee})=W$ and therefore $w_{|V^*}\in W$.
It remains to prove that $W_h^{\Cc}\longrightarrow W,\;w\mapsto w_{|V^*}$ is bijective.

\underline{Injectivity:}
Let $\rho_h=\tfrac{1}{2} \sum_{\alpha\in R_h^+} \alpha$ be the Weyl 
vector associated to the positive coroots $R_h^+\subset R_h^\vee$ and 
$\rho=\tfrac{1}{2}\sum_{\beta\in R^+} \beta$ be the Weyl vector associated to the 
positive coroots $R^+\subset R^\vee$. 
Since $R^\vee=(R_h^\vee)^{\Cc}$, by \eqref{eq:Ra} (replacing $R_h$ by $R_h^\vee$)
we have $\rho=\rho_h\in V_h$. 
If $w_{|V^\ast}=\mathrm{id}$ for some $w\in W_h^\Cc$, then 
$w(\rho_h)=w(\rho)=\rho=\rho_h$. 
Hence $w$ fixes a vector in the fundamental Weyl chamber associated to 
$R_h^+$and thus a fundamental domain for $W_h$, and therefore $w=\id$.  

\noindent
\underline{Surjectivity onto $W$:}
It suffices to prove that every simple reflection 
$s_\beta$, $\beta\in \Delta^{\!\!\vee}$, is in the image of the homomorphism \eqref{eq:resW}. 
Let $\beta=\alpha^O$ for $\alpha\in \Delta_h$ as in \eqref{eq:alphaO}. 
Then 
\begin{equation}\label{eq:weylfolding}
s_\beta= \widetilde s_\beta{}_{|V^\ast}\quad \mbox{with}\quad
\widetilde s_\beta:=\prod_{\alpha^\prime\in O(\alpha)} s_{\alpha^\prime},
\end{equation}
where we have used the very definition of Dynkin graph automorphisms
(footnote \ref{graphauto}).
Since $a$ permutes $O(\alpha)$ and $a s_{\alpha^\prime} a^{-1} = s_{a\alpha^\prime}$
for every $\alpha^\prime\in O(\alpha)$, we have $\widetilde s_\beta\in W_h^\Cc$, and surjectivity onto $W$ of 
our map follows. 
\end{proof}

We may thus indeed realize $W$ naturally as a subgroup of $W_h$. For its
action on the regular elements of $\tfr=\tfr_h^\Cc$, we note
\begin{lem}\label{regact}
Assume that $t\in\tfr$ and $w\in W_h$ with $wt\in\tfr$. Then the orbits of 
$t$ and $wt$ under $W$ agree, $W(wt)=W(t)$. In particular, if 
$t$ belongs to the set $\tfr^\circ$ of regular elements of $\tfr$, then $w\in W$.
\end{lem}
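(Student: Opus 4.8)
The plan is to leverage Proposition \ref{p:foldingweyl}, which identifies $W$ with $W_h^{\Cc}=\{w\in W_h \mid aw=wa\}$ acting on $V_h^*$, together with the standard fact that a fundamental domain for the $W_h$-action on $\tfr_h$ is given by (the complexification of) the closed fundamental Weyl chamber $\overline{C}$ associated to the chosen positive system $R_h^+$. First I would recall that $\tfr = \tfr_h^{\Cc}$ sits inside $\tfr_h$ and that $W$ acts on $\tfr$ as the subgroup $W_h^{\Cc}\subset W_h$; in particular, for $t,t'\in\tfr$ one has $W(t)=W(t')$ if and only if $t$ and $t'$ lie in the same $W_h^{\Cc}$-orbit. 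So the claim $W(wt)=W(t)$ is equivalent to showing that whenever $t\in\tfr$, $w\in W_h$, and $wt\in\tfr$, there exists $w'\in W_h^{\Cc}$ with $w't=wt$.

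The key step is the following reduction to the real case. Since $W_h$ acts on $\tfr_h=V_h^*\otimes_{\R}\C$ through its real form, and $\tfr=V^*\otimes_{\R}\C$, it suffices to treat the $W_h$-action on $V_h^*$ itself; the orbit statement over $\C$ follows by $\C$-linearity once it is established over $\R$ (a point $t$ and its real/imaginary parts all lie in $V^*$ since $V^*$ is a real subspace). Next, I would argue that the $\Cc$-fixed subspace $V^* = (V_h^*)^{\Cc}$ meets the closed fundamental Weyl chamber $\overline{C}$ in a fundamental domain for the $W_h^{\Cc}$-action on $V^*$: indeed $\rho_h\in\overline{C}\cap V^*$ is $a$-fixed (as noted in the proof of Proposition \ref{p:foldingweyl}, $\rho=\rho_h$), so $\overline{C}\cap V^*$ is nonempty, and more generally any $W_h$-orbit in $V^*$ meets $\overline{C}\cap V^*$ — one shows this by a folding/averaging argument: if $v\in V^*$ and $w_0 v\in\overline{C}$ for some $w_0\in W_h$, then $a(w_0 v)=aw_0a^{-1}(av)=aw_0a^{-1}v$ is another point of the orbit lying in $\overline{C}$ (since $a$ preserves $\overline C$, being a Dynkin graph automorphism), hence equals $w_0 v$ because $\overline C$ meets each $W_h$-orbit exactly once; this forces $w_0 v\in V^*$, and $(aw_0a^{-1})w_0^{-1}$ fixes $w_0 v$. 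If $w_0 v$ is regular, $(aw_0a^{-1})w_0^{-1}=\id$, i.e. $w_0\in W_h^\Cc$; in general one adjusts $w_0$ within the stabilizer (which is generated by simple reflections $s_\alpha$ with $\alpha$ orthogonal to $w_0 v$, and this set of $\alpha$'s is $a$-stable, so the $\Cc$-invariant part of the stabilizer surjects appropriately) to land in $W_h^\Cc$.

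With that in hand, the lemma follows quickly: given $t\in\tfr^\circ$ and $w\in W_h$ with $wt\in\tfr$, choose $w_1\in W_h^{\Cc}$ with $w_1 t\in\overline{C}$ and $w_2\in W_h^{\Cc}$ with $w_2(wt)\in\overline C$; then $w_1 t$ and $(w_2 w)t$ both lie in $\overline C\cap V^*$ and in the same $W_h$-orbit, so they coincide, giving $(w_1^{-1}w_2 w)t = t$ with $w_1^{-1}w_2\in W_h^\Cc$. Since $t$ is regular, its $W_h$-stabilizer is trivial, so $w_1^{-1}w_2 w = \id$, whence $w = w_2^{-1}w_1\in W_h^{\Cc}=W$; and $W(wt)=W(w_1^{-1}w_2 w t)=W(t)$ after translating back by the element $w_1^{-1}w_2\in W$. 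For non-regular $t$ one drops the last conclusion $w\in W$ but still gets $W(wt)=W(t)$ from the orbit-representative argument. I expect the main obstacle to be the careful handling of the non-regular case in the fundamental-domain step — precisely, verifying that one can always move a $W_h$-orbit representative in $V^*$ into $\overline C$ by an element of $W_h^{\Cc}$ rather than merely of $W_h$, which requires knowing that the pointwise stabilizer in $W_h$ of a point of $V^*$ is generated by $a$-stable families of simple reflections and that folding these yields elements of $W_h^\Cc$; this is exactly the mechanism already exploited in \eqref{eq:weylfolding}, so it should go through.
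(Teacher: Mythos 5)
Your proposal follows essentially the same route as the paper's proof: move $t$ and $wt$ into the closed fundamental Weyl chamber $\overline{C}$ of $W_h$ by elements of $W\cong W_h^{\Cc}$, use that $\overline{C}$ meets each $W_h$-orbit exactly once, and invoke triviality of the $W_h$-stabilizer of a regular element to conclude $w\in W$. Two points need attention. First, your step ``since $t$ is regular, its $W_h$-stabilizer is trivial'' (and likewise ``if $w_0v$ is regular\ldots'') uses regularity with respect to $R_h$, whereas the hypothesis $t\in\tfr^\circ$ is regularity with respect to the folded system; you need the inclusion $\tfr^\circ\subset\tfr_h^\circ$, which the paper establishes in two lines: for $t\in\tfr=\tfr_h^{\Cc}$, $\alpha(t)=0$ with $\alpha\in R_h$ forces $\alpha_O(t)=0$ with $\alpha_O\in R$. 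This is easy but cannot be skipped.

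Second, and more substantively: the existence of $w_1,w_2\in W_h^{\Cc}$ moving $t$, respectively $wt$, into $\overline{C}$ is exactly where your argument is incomplete for non-regular points, and the unrestricted assertion $W(wt)=W(t)$ is the main claim of the lemma (it is what the injectivity part of Corollary \ref{cor:tfr} uses). Your proposed fix --- ``one adjusts $w_0$ within the stabilizer\ldots the $\Cc$-invariant part of the stabilizer surjects appropriately'' --- is asserted rather than proved, and as stated it amounts to a nontrivial vanishing of a twisted $H^1$ of $\Cc$ with values in the parabolic stabilizer; it is true here, but not for the reason you give. The clean repair avoids stabilizers entirely: $\overline{C}\cap V^*$ is precisely the closed fundamental chamber of the reflection group $W$ acting on $V^*$ (the simple roots $\alpha_O$ of $R$ restrict to $V^*$ exactly as the $\alpha\in\Delta_h$ do, cf.\ Corollary \ref{cor:tfr}~i)), so every $W$-orbit in $V^*$ meets it by the standard theory of finite reflection groups; uniqueness of the $W_h$-orbit representative in $\overline{C}$ then gives $W(wt)=W(t)$ for all $t$, with no adjustment of $w_0$ needed. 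With that substitution your argument closes and coincides with the paper's treatment of the regular case; the paper handles non-regular $t$ differently, by passing to the root subsystem of roots not vanishing on $t$, but the chamber argument is arguably simpler. (Both you and the paper are equally loose about $t$ being a complex point; your reduction ``by $\C$-linearity'' to the real case does not literally work, since the real and imaginary parts of $t$ must be moved by one and the same group element, but this is a shared and standard repairable imprecision.)
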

\begin{proof}
First note that $\tfr^\circ\subset\tfr_h^\circ$.
Indeed, if $t\in\tfr$ but $t\notin\tfr_h^\circ$, then $\alpha(t)=0$ 
for some $\alpha\in\Delta_h$.
Now $a(t)=t$ because $\tfr=\tfr^\Cc_h$, and since $a\in\mbox{Aut}(\Delta_{h})$, for $\beta:=\alpha_O$, we conclude
$\beta(t)=0$ with $\beta\in\Delta_{h,\Cc}$. 
In other words, $t\notin\tfr^\circ$.

Now let $t\in \tfr^\circ$, so by the above, $t\in \tfr_h^\circ$.
Then $w\in W$ if and only if $W(wt)=W(t)$.
To show that indeed $w\in W$, choose $w_1,\,w_2\in W$
such that $t^\prime := w_1 t$ and $w_2 w t$ both belong to  the 
fundamental Weyl chamber in $\tfr$ corresponding to our choice of simple roots of $R$.  
By construction and assumption, with $w^\prime := w_2  w (w_1)^{-1}$, both $t^\prime$ and $w^\prime t^\prime$ belong to
the same fundamental Weyl 
chamber in $\tfr_h$ with $w^\prime\in W_h$, hence $w^\prime=\mathrm{id}$ and $w= (w_2)^{-1} w_1\in W$ follows.

If $t\notin\tfr^\circ$ is non-zero, then we work with the root subsystem 
$R(t)$ of $R$ defined by all roots $\alpha\in R$ such that $\alpha(t)\neq 0$.
Then $t$ is regular with respect to $R(t)$ and the above argument applies.
If $t=0$, there is nothing to show.
\end{proof}

According to classical results by Chevalley\footnote{These hold for any complex Lie algebra
that corresponds to a complex reductive algebraic group $H$.\label{Chevalley}} \cite{Che}, 
explained, for example, in 
\cite[Section 23]{Humphreys},
the coordinate ring $\C[\tfr]^{W}$ of $\tfr/W$ is freely generated by polynomials
$\chi_1,\ldots,\chi_r$ of degrees $d_1,\ldots,d_r$, respectively, where $r$ denotes
the rank of $\gfr$ and $\epsilon_j=d_j-1$, $j\in\{1,\ldots,r\}$, are the exponents of this Lie
algebra, and similarly for $\gfr_h$. This induces natural $\C^\ast$-actions on $\tfr/W$
and $\tfr_h/W_h$,  as well as non-canonical 
isomorphisms $\tfr/W\cong\C^r$, $\tfr_h/W_h\cong\C^{r_h}$ 
as $\C^*$-spaces with weights $d_1,\dots, d_r$.

\begin{cor}\label{cor:tfr}
\hfill\\[-5pt]
	\begin{enumerate}[label=\roman*)]
		\item The reflection hyperplanes $\tfr^{\alpha^O}\subset \tfr$, $\alpha^O\in 
		R^\Cc_h=R^\vee$, 
		are given by 
				\begin{equation}\label{eq:tfrO}
				\tfr^{\alpha^O}=\bigcap_{\alpha^\prime\in O(\alpha)} \tfr_h^{\alpha^\prime} \cap \tfr\quad \subset\quad \tfr_h.
				\end{equation}
	\item The inclusion $\tfr=\tfr_h^\Cc\hookrightarrow \tfr_h$ induces the isomorphism
			\begin{equation*}
			\tfr/W\cong (\tfr_h/W_h)^\Cc
			\end{equation*}
			of $\C^*$-spaces.
	\end{enumerate}
\end{cor}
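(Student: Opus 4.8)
The plan is to deduce both parts directly from the machinery already assembled in Proposition \ref{p:foldingweyl} and Lemma \ref{regact}, together with Chevalley's theorem on invariant rings. For part (i), I would start from the description \eqref{eq:weylfolding} of the simple reflection $s_\beta$ attached to $\beta = \alpha^O$: it is the restriction to $V^*$ of $\widetilde s_\beta = \prod_{\alpha' \in O(\alpha)} s_{\alpha'}$, and because $a$ is a Dynkin graph automorphism the reflections $s_{\alpha'}$ for $\alpha' \in O(\alpha)$ commute pairwise. Hence the fixed locus of $\widetilde s_\beta$ acting on $\tfr_h$ is exactly the intersection $\bigcap_{\alpha' \in O(\alpha)} \tfr_h^{\alpha'}$. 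Intersecting with $\tfr = \tfr_h^\Cc$ and observing that the fixed locus of $s_\beta$ on $\tfr$ is by definition $\tfr^{\alpha^O}$, while $s_\beta = \widetilde s_\beta|_{\tfr}$, gives \eqref{eq:tfrO} immediately.

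For part (ii), I would argue in two steps: first construct the map $\tfr/W \to (\tfr_h/W_h)^\Cc$, then show it is bijective. The inclusion $\iota\colon \tfr = \tfr_h^\Cc \hookrightarrow \tfr_h$ is $\Cc$-equivariant (trivially, since $\Cc$ acts trivially on $\tfr$) and intertwines the $W$-action on the source with the $W_h$-action on the target via the embedding $W \cong W_h^\Cc \hookrightarrow W_h$ of Proposition \ref{p:foldingweyl}; therefore $\iota$ descends to a morphism $\overline\iota\colon \tfr/W \to \tfr_h/W_h$, whose image lies in the $\Cc$-fixed locus because $\Cc$ normalizes $W_h^\Cc$ inside $W_h$ (indeed $a$ commutes with every element of $W_h^\Cc$) so the induced $\Cc$-action on $\tfr_h/W_h$ fixes each class $[t]$ with $t \in \tfr$. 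For injectivity of $\overline\iota$: if $t, t' \in \tfr$ satisfy $t' = w t$ for some $w \in W_h$, then Lemma \ref{regact} applied on the (possibly degenerate) root subsystem $R(t)$ shows $W(t) = W(t')$, i.e. $[t] = [t']$ in $\tfr/W$. For surjectivity onto $(\tfr_h/W_h)^\Cc$: given a $W_h$-orbit fixed by $\Cc$, pick a representative $s$ in the closed fundamental Weyl chamber of $\tfr_h$; then $a(s)$ lies in the same chamber and the same $W_h$-orbit, so $a(s) = s$, giving $s \in \tfr_h^\Cc = \tfr$, and its class maps to the given orbit.

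Finally I would upgrade the set-theoretic bijection to an isomorphism of $\C^*$-spaces. Using Chevalley's theorem, $\C[\tfr_h]^{W_h} = \C[\chi_1^h,\dots,\chi_{r_h}^h]$ with the $\C^*$-grading by degrees $d_j^h$, and the $\Cc$-action on $\tfr_h/W_h$ is by an algebraic (in fact linear, in suitable coordinates) automorphism commuting with the $\C^*$-action; hence $(\tfr_h/W_h)^\Cc$ is a closed $\C^*$-stable subvariety. The morphism $\overline\iota$ is $\C^*$-equivariant because $\iota$ is (the $\C^*$-action on both Cartan subalgebras is the standard scaling, and $\tfr_h^\Cc$ is a linear subspace), and restriction of functions gives a surjection $\C[\tfr_h]^{W_h} \twoheadrightarrow \C[\tfr]^W$ — surjectivity because $\C[\tfr]^W$ is generated by the images of suitable $\Cc$-averaged Chevalley generators, matching the count $r = \dim \tfr$. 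Being a bijective morphism between the reduced variety $\tfr/W \cong \C^r$ (normal) and the closed subvariety $(\tfr_h/W_h)^\Cc$, $\overline\iota$ is an isomorphism by Zariski's main theorem.

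The step I expect to be the main obstacle is not any single deduction but rather being careful about what "$\Cc$-fixed locus of $\tfr_h/W_h$" means and why $\overline\iota$ lands in it and surjects onto it: the subtlety is that $\Cc$-invariance of a $W_h$-orbit is a weaker condition than pointwise $\Cc$-fixedness of a representative, and the bridge between the two is precisely the fundamental-domain argument (a $\Cc$-invariant $W_h$-orbit must contain a genuinely $\Cc$-fixed point), which relies on the fact that $a$ preserves the chosen positive system. Everything else — the commuting reflections in part (i), the injectivity via Lemma \ref{regact}, and the final normality argument — is routine once this point is nailed down.
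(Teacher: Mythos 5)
Your proposal is correct and follows essentially the same route as the paper: part (i) is read off from \eqref{eq:weylfolding}, and part (ii) descends the inclusion $\tfr\hookrightarrow\tfr_h$ using $W\cong W_h^\Cc\subset W_h$, proves injectivity via Lemma \ref{regact}, and proves surjectivity by the same fundamental-Weyl-chamber argument (a $\Cc$-stable $W_h$-orbit meets the closed chamber in a unique, hence $\Cc$-fixed, point). Your closing paragraph on upgrading the bijection to an isomorphism of $\C^*$-spaces via Chevalley generators and normality is more explicit than what the paper records, but it is an elaboration rather than a different argument.
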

\begin{proof}
\hfill\\[-10pt]
	\begin{enumerate}[label=\roman*)]
		\item 
		This is a direct consequence of \eqref{eq:weylfolding}. 
		\item 
		As stated before, the claim follows from Slodowy's work,
		see \cite[\S8, Corollary and Remarks, p.\ 144]{Slo}. But it also follows from 
		what was shown in this section, so far.
Indeed, $\tfr\hookrightarrow\tfr_h\longrightarrow \tfr_h/W_h$ factorizes over
$\tfr/W$ since $W\subset W_h$ by Proposition \ref{p:foldingweyl}. Moreover, since
$W_h$ is a normal subgroup of $\mathrm{Aut}(\Delta_h)$, $\Cc$ acts naturally on
$\tfr_h/W_h$ such that $\tfr/W$ maps to $(\tfr_h/W_h)^\Cc$.  
The $\Cc$-action commutes
with the $\C^*$-action by construction, so this is a homomorphism of $\C^*$-spaces.
We need to show that it is injective and surjective.

\underline{Injectivity:}
If $t,\, t^\prime\in\tfr$ with $t^\prime=w t$ for some $w\in W_h$, 
we need to show that $t^\prime=w^\prime t$ for some $w^\prime\in W$. 
But this is immediate from  Lemma \ref{regact}.

\underline{Surjectivity:}
Assume that $t\in\tfr_h$ and $at=wt$ for some $w\in W_h$, that is,
$t$ represents a class in $(\tfr_h/W_h)^\Cc$. 
Since $W_h$ is a normal subgroup of $\mathrm{Aut}(\Delta_h)$, $\Cc$ acts on the orbit $W_h(t)$ of $t$ under $W_h$.
On the other hand, $W_h(t)$ intersects the closure of the fundamental Weyl chamber in a unique element $w^\prime t\in W_h(t)$, $w^\prime\in W_h$. 
Hence $aw^\prime t=w^\prime t$, and $w^\prime t\in \tfr$ represents a class in $\tfr/W$ which maps to the class of $t$  in $\tfr_h/W_h$.
\end{enumerate}
\end{proof}
It is important to keep in mind that the simple roots in the root system $\Delta_h$ 
form a basis of $\tfr_h^\ast$; descending to $\Cc$-invariants in $\tfr_h/W_h$
as in Corollary \ref{cor:tfr} above thus corresponds to descending to 
coinvariants on the level of root systems, $\Delta=\Delta_{h,\Cc}$. This justifies our
choice of notations, where $\Delta^{\!\!\vee} = \Delta_h^\Cc$ in contrast
to the conventions in Slodowy's work \cite{Slo}, cf.\ footnote \ref{footnote:Slo}
on page \pageref{footnote:Slo} below.
\begin{ex}\label{runLiealgebras}
The Lie algebras encoded by the Dynkin data $\Delta_h=\mathrm{A}_3$ and
$\Delta=\mathrm{C}_2$ are $\mathfrak{g}_h=\mathfrak{sl}_4(\C)$ and 
$\mathfrak{g}=\mathfrak{sp}_4(\C)$, respectively.
We may thus use
\begin{eqnarray*}
\mathfrak{g}_h &=& \left\{ A\in \mbox{Mat}_{4\times 4}(\C) \mid \mbox{tr}(A) =0 \right\}, \\
\mathfrak{g} &=& \left\{ \begin{pmatrix}  -d^T & b\\ c&  d \end{pmatrix}\bigg|
 b,\, c,\, d \in \mbox{Mat}_{2\times 2}(\C),\;
b=b^T,\; c=c^T \right\}.
\end{eqnarray*}
As our Cartan subalgebras, we choose the subalgebras of diagonal matrices in each case,
$$
\mathfrak{t}_h= \left\{ \mbox{diag}(u,\,v,\,w,\,-u-v-w)\mid u,\,v,\,w\in\C \right\}, \quad
\mathfrak{t} = \left\{ \mbox{diag}(u,\,v,\,-u,\,-v)\mid u,\,v\in\C \right\}.
$$
For later convenience we also choose simple coroots for $\mathfrak{g}_h=\mathfrak{sl}_4(\C)$, namely
$$
\alpha_1^\vee := \mbox{diag}(1,\,-1,\,0,\,0), \quad
\alpha_2^\vee := \mbox{diag}(0,\,1,\,-1,\,0), \quad
\alpha_3^\vee := \mbox{diag}(0,\,0,\,1,\,-1),
$$
while 
for $\mathfrak{g}=\mathfrak{sp}_4(\C)$ we use
$$
\beta_1^\vee :=\tfrac{1}{2} \mbox{diag}(-1,\,-1,\,1,\,1), \quad
\beta_2^\vee :=   \mbox{diag}(0,\,1,\,0,\,-1).
$$
The generators $s_j$ of the Weyl group $W_h$ corresponding to our choices
of simple roots $\alpha_j^\vee$ act by transpositions $(j,\,j+1)$, exchanging the $j^{\mbox{\tiny th}}$ and
the $(j+1)^{\mbox{\tiny th}}$ diagonal entry of any matrix $t\in\tfr_h$, 
while $\beta_1^\vee$ corresponds to $s_1s_2s_3s_2s_1s_2=(1,\, 4)(2,\,3)$ and $\beta_2^\vee$ corresponds to $s_2s_3s_2=(2,\,4)$.
As remarked in Example \ref{Ex:roots}, the generator $a$ of the $\Cc$-action for $\Delta_h$ is induced by 
$\alpha_1^\vee\leftrightarrow\alpha_3^\vee$.
Let us explicitly verify $\tfr/W\cong (\tfr_h/W_h)^\Cc$ as $\C^*$-spaces in this example.
To describe $\tfr/W$, we work with the fundamental Weyl chamber in $\tfr$ corresponding
to the simple root basis $(\beta_1^\vee,\, \beta_2^\vee)$, and analogously for
$\tfr_h/W_h$. Then the isomorphism $\tfr\cong \tfr_h^\Cc$ given by
\begin{equation}\label{eq:beta-alpha}
\beta_1^\vee\mapsto \tfrac{1}{2}(\alpha_1^\vee+\alpha_3^\vee),\qquad \beta_2^\vee\mapsto \alpha_2^\vee
\end{equation}
induces an isomorphism $\tfr/W \cong (\tfr_h/W_h)^\Cc$ which is $\C^*$-equivariant. 
Indeed, the coordinate ring $\C[\tfr/W]= \C[\tfr]^{W}$ is generated by the even 
elementary symmetric polynomials $\sigma_j$, $j\in \{ 2,4\},$ of degree $j$ 
in the diagonal entries of each element of $\tfr$.
This yields natural coordinates $\xi=(\sigma_2,\sigma_4)$ on $\tfr/W$.
On $\tfr_h/W_h$ we similarly have coordinates $\xi_h=(\sigma_2,\sigma_3,\sigma_4)$
where $\sigma_3$ vanishes on $(\tfr_h/W_h)^\Cc$. We thus may use
$\check\xi_h=(\sigma_2,\sigma_4)$ as coordinates on $(\tfr_h/W_h)^\Cc$ and check
$$
\check\xi_h( u(\alpha_1^\vee +\alpha_3^\vee) + (u+v) \alpha_2^\vee ) 
= (-u^2-v^2, u^2v^2) = \xi(2u\beta_1^\vee + (u+v) \beta_2^\vee ).
$$
Hence the isomorphism $\check{\xi}_h\mapsto \xi$ of coordinate 
rings yields the isomorphism $\tfr/W\cong (\tfr_h/W_h)^\Cc$ 
induced by \eqref{eq:beta-alpha}, which is therefore $\C^*$-equivariant.
\end{ex}
\subsection{Folding of simple complex Lie algebras}\label{ss:foldingLAs}
Let $\gfr_h=\gfr(\Delta_h)$ be the simple complex Lie algebra associated with $\Delta_h$. 
If $G_h$ denotes the simple adjoint complex Lie group of $\gfr_h$, then we have 
the short exact sequence
\begin{equation}\label{eq:exactseq}
\begin{tikzcd}
 1 \ar[r] & G_h \ar[r] & \mathrm{Aut}(\gfr_h) \ar[r] & \mathrm{Aut}(\Delta_h) \ar[r] & 1, 
\end{tikzcd}
\end{equation}
see \cite[\S8.8]{Slo}.
This short exact sequence splits: 
let $\alpha_i\in \Delta_h$, $i\in\{1,\dots, r\}$, denote a choice of simple roots, and choose 
a Chevalley basis $(e_\alpha,\, \alpha_i^\vee~|~\alpha \in R_h, \, 
i\in\{1,\dots, r\})$ of $\gfr_h$. 
This means that for $\alpha\in R_h$, $e_\alpha$ forms a basis of the root space $\gfr_{h,\alpha}$, and $(\alpha_1^\vee,\,\ldots,\,\alpha_r^\vee)$ is a simple basis of $\tfr_h$, with normalizations as follows. 
For every $\alpha\in R_h$, $[e_\alpha,e_{-\alpha}]=h_\alpha$ with
$\alpha(h_\alpha)=2$. If $\alpha,\,\beta,\,\alpha+\beta\in R_h$, then
$$
[e_\alpha, e_\beta]= c_{\alpha,\beta}\, e_{\alpha+\beta}\qquad
\mbox{ with } \qquad c_{-\alpha,-\beta}=-c_{\alpha,\beta}\;\in\Z.
$$
These normalizations ensure that for any $a\in \mathrm{Aut}(\Delta_h)$ we may
define, by abuse of notation, the automorphism
$a\colon\gfr_h\longrightarrow \gfr_h$ by
\begin{equation}\label{splitaction}
a(e_\alpha):=e_{a\cdot \alpha}, \quad a(\alpha_i^\vee):= (a\cdot\alpha_i)^\vee. 
\end{equation}
Note that this splitting is compatible with the $\Cc$-action
on $\tfr_h$ already used in Section \ref{sec:foldingCartan}.
\begin{prop}\label{prop:isoinv}
Let $\gfr_h=\gfr(\Delta_h)$ be the simple complex Lie algebra associated to the irreducible Dynkin 
diagram $\Delta_h$ and $\Cc=\langle a \rangle$ the
subgroup of $\mathrm{Aut}(\Delta_h)$ generated by the Dynkin graph automorphism $a$. 
Further let $\Cc\hookrightarrow \mathrm{Aut}(\gfr_h)$ be determined by the
above splitting of \eqref{eq:exactseq}. 
Then 
\begin{equation*}\label{eq:isoinv}
\gfr_h^\Cc\cong \gfr(\Delta_{h,\Cc}).
\end{equation*} 
\end{prop}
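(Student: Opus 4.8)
The plan is to compute the fixed subalgebra $\gfr_h^\Cc$ explicitly from the root-space decomposition of $\gfr_h$ and then to recognise the result through the structure theory of semisimple Lie algebras, feeding in the combinatorial identifications of Lemmas~\ref{lem:folding} and~\ref{lem:foldingroots}. Starting from $\gfr_h=\tfr_h\oplus\bigoplus_{\alpha\in R_h}\gfr_{h,\alpha}$ and using the splitting~\eqref{splitaction}, the group $\Cc=\langle a\rangle$ acts on $\tfr_h$ by the permutation of coroots already used in Section~\ref{sec:foldingCartan} and carries $\gfr_{h,\alpha}$ to $\gfr_{h,a\cdot\alpha}$, so it permutes the root lines exactly as it permutes $R_h$. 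Taking invariants,
\[
\gfr_h^\Cc=\tfr\oplus\bigoplus_{O}\Bigl(\bigoplus_{\alpha\in O}\gfr_{h,\alpha}\Bigr)^{\!\Cc},\qquad \tfr:=\tfr_h^\Cc,
\]
the sum running over the $\Cc$-orbits $O=O(\alpha)\subseteq R_h$. Since $a$ (hence every power of $a$) lies in $\mathrm{Aut}_D(\Delta_h)$, the roots inside one orbit are pairwise orthogonal: for $\alpha,\alpha'=a^{k}\alpha$ in $O$ one has $(\alpha,\alpha')=(a^{k}\alpha,\alpha)\in\{0,2\}$ by footnote~\ref{graphauto}, and the value $2$ forces $\alpha'=\alpha$. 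As $\Cc$ then cyclically permutes the $|O|$ lines $\gfr_{h,\alpha}$ ($\alpha\in O$) via $e_\alpha\mapsto e_{a\cdot\alpha}$, its invariants there form the single line spanned by $E_O:=\sum_{\alpha\in O}e_\alpha$, and we set $F_O:=\sum_{\alpha\in O}e_{-\alpha}$. Thus $\gfr_h^\Cc=\tfr\oplus\bigoplus_O\C E_O$.

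Next I would pin down the $\tfr$-action. Because $at=t$ for $t\in\tfr$, all roots in an orbit $O$ restrict to one and the same functional $\bar\alpha\in\tfr^{*}$, so $E_O$ is a $\tfr$-weight vector of weight $\bar\alpha$. The restriction $\tfr_h^{*}=V_h\to\tfr^{*}$ has kernel $(1-a)V_h$ and is precisely the coinvariant quotient $\alpha\mapsto\alpha_O$; hence the set of $\tfr$-weights occurring in $\gfr_h^\Cc$ is exactly $R_{h,\Cc}$, and $\alpha_O\neq 0$ always (otherwise the orbit sum $\alpha^{O}$ would vanish, contradicting $(\alpha^{O},\alpha^{O})=2|O|>0$). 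Consequently the zero-weight space of $\gfr_h^\Cc$ is $\tfr$, so $\tfr$ is abelian and self-normalising in $\gfr_h^\Cc$, i.e.\ a Cartan subalgebra. A short bracket computation—the cross terms $[e_\alpha,e_{-\alpha'}]$ with $\alpha\neq\alpha'$ in $O$ vanish since $\alpha-\alpha'$ has squared length $4$ and is not a root—gives $[E_O,F_O]=\sum_{\alpha\in O}h_\alpha=:H_O\in\tfr$ with $\bar\alpha(H_O)=2$, so $\{E_O,F_O,H_O\}$ is an $\mathfrak{sl}_2$-triple, and under the identification of Lemma~\ref{lem:foldingroots} the element $H_O$ is the coroot $(\alpha_O)^{\vee}$ of $R_{h,\Cc}$.

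To conclude: any ideal $I\trianglelefteq\gfr_h^\Cc$ is $\tfr$-stable, hence a direct sum of a subspace of $\tfr$ with some of the lines $\C E_O$. Either $I\subseteq\tfr$, in which case $I=0$ because $R_{h,\Cc}$ spans $\tfr^{*}$; or $I$ contains some $\C E_O$, whence $H_O=[F_O,E_O]\in I$ and then $F_O\in I$, so $I$ contains a copy of $\mathfrak{sl}_2$ and is not solvable. Therefore $\gfr_h^\Cc$ is semisimple with Cartan subalgebra $\tfr$ and root system $R_{h,\Cc}$, which by Lemma~\ref{lem:folding} is the irreducible root system with Dynkin diagram $\Delta=\Delta_{h,\Cc}$. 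By the isomorphism theorem for semisimple Lie algebras (see, e.g., \cite{Humphreys}) this forces $\gfr_h^\Cc\cong\gfr(\Delta_{h,\Cc})$.

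The only genuinely delicate point is the orthogonality of roots within a $\Cc$-orbit used in the first step: this is exactly what distinguishes Dynkin graph automorphisms from arbitrary graph automorphisms (the latter failing already for $\mathrm{A}_{2n}$), and it is simultaneously responsible for the one-dimensionality of the invariant root spaces $\C E_O$ and for the reducedness of $R_{h,\Cc}$. Everything afterwards is standard structure theory combined with the combinatorics already established in Section~\ref{ss:foldroot}; in particular the Cartan integers $\bar\alpha(H_{O'})$ match those of $\Delta_{h,\Cc}$ automatically via Lemmas~\ref{lem:folding}--\ref{lem:foldingroots}, so no separate verification of Serre relations is needed.
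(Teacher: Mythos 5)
Your proof is correct, and it reaches the conclusion by a genuinely different route from the paper's. The paper works with the averaging projection $p=\tfrac{1}{|a|}\sum_k a^k$: semisimplicity of $\gfr_h^\Cc$ is obtained at once from the non-degeneracy of the restricted Killing form, the Cartan subalgebra $\tfr=\tfr_h^\Cc$ is recognised as a maximal toral subalgebra by a dimension count against $\mathrm{rk}(R_h^\Cc)$, and the root space decomposition is read off from the bracket computation $[s,\eta]=\alpha_O(s)\,\eta$ for $\eta=p(\xi)$, $\xi\in\gfr_{h,\alpha}$. You instead take $\Cc$-invariants directly in the root space decomposition of $\gfr_h$, which forces you to invoke the defining property of Dynkin graph automorphisms --- orthogonality of distinct roots within a $\Cc$-orbit --- explicitly; this buys you the explicit $\mathfrak{sl}_2$-triples $(E_O,F_O,H_O)$ with $H_O=(\alpha_O)^\vee$, a hands-on proof of semisimplicity via the radical rather than the Killing form, and a transparent view of exactly where the argument would break for $\mathrm{A}_{2n}$; none of this is visible in the paper's argument. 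Two small points you lean on without comment, neither of which is a gap: for $|a|=3$ one should note that $(a^2\alpha,\alpha)=(\alpha,a\alpha)\in\{0,2\}$, so orbit-orthogonality really does follow from the condition imposed on $a$ alone; and the decomposition of a $\tfr$-stable ideal into the lines $\C E_O$ tacitly uses that distinct $\Cc$-orbits have distinct images in $R_{h,\Cc}$ (so that the nonzero weight spaces are genuinely one-dimensional), which is part of the content of Lemma \ref{lem:folding} that you cite.
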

\begin{proof}
Set $\gfr:=\gfr^{\Cc}_h$, $\tfr:=\tfr_h^{\Cc}$ and let 
\begin{equation}\label{eq:projectiongfr}
 p\colon\gfr_h \longrightarrow \gfr, \quad \xi \mapsto \tfrac{1}{|a|} \sum_{k=1}^{|a|} a^k(\xi)
\end{equation}
be the (vector space) projection.
Firstly note that $a$ is a Lie algebra homomorphism, which implies that the Killing form $\kappa_h$ on $\gfr_h$ restricts to a non-degenerate form  $\kappa$, the Killing form of $\gfr$. 
Thus by \cite[\S 5.1]{Humphreys}, $\gfr$ is semisimple.

We next show that $\tfr$ is a Cartan subalgebra.
	Since $\C$ has characteristic zero and $\gfr$ is semisimple, by \cite[\S 15.3]{Humphreys} it suffices to show that $\tfr\subset \gfr$ is a maximal toral subalgebra. 
	That $\tfr$ is toral follows immediately from the fact that $a$ is a Lie algebra homomorphism.
	Since 
	$$
	\dim(\tfr)=\mathrm{rk}(R^\vee)=\dim(\tfr_h^\Cc),\qquad R^\vee=R_h^\Cc,
	$$	
$\tfr$ is also maximal for dimensional reasons.
 
By the duality between invariants and coinvariants, we have
\begin{equation}
(\tfr_h^\Cc)^*\cong (\tfr_h^*)_{\Cc}. 
\end{equation}
In particular, $R_{h,\Cc}\hookrightarrow \tfr^*$. 

Next we claim that\footnote{Note that $\alpha_O=(a\cdot \alpha)_O$ for all $\alpha\in R_h$.}
\begin{equation}\label{eq:rootspace}
 \gfr= \tfr\oplus \bigoplus_{\alpha_O\in R_{h,\Cc}}  \gfr_{\alpha_O}
\end{equation}
is a root space decomposition of $\gfr$. 
Indeed, let $\eta=p(\xi)$ with $\xi\in \gfr_{h,\alpha}$, 
$s=p(t)$ with $t\in \tfr$ and compute 
\begin{align*}
 [s,\eta] =& \tfrac{1}{|a|^2} \sum_{k,l} [a^k(t), a^l(\xi)] \\
 =&  \tfrac{1}{|a|^2} \sum_{k,l} a^l ([a^{k-l}(t), \xi]) \\
 =& \tfrac{1}{|a|^2} \sum_{k,l} \alpha(a^{k-l}(t)) a^l(\xi) \\
 =& \tfrac{1}{|a|}\sum_l \alpha(s) a^l(\xi) 
 = \alpha_O(s) \eta. 
\end{align*}
It follows that $p$ surjects $\gfr_{h,\alpha}$ onto $\gfr_{\alpha_O}$. 
In particular, this shows \eqref{eq:rootspace}, thus concluding the proof. 
\end{proof}
The previous result combined with Corollary \ref{cor:tfr} yields the commutative diagram
\begin{equation}\label{eq:relationadjquotients}
\begin{tikzcd}
\gfr \ar[r, hookrightarrow]  \ar[d, "\chi"'] & \gfr_h \ar[d,"\chi_h"] \\
\tfr/W \ar[r, hookrightarrow] & \tfr_h/W_h. 
\end{tikzcd}
\end{equation}
Here $\chi$ and $\chi_h$ are the respective adjoint quotients. 

\begin{ex}\label{runchis}
For  $\gfr_h=\mathfrak{sl}_4(\C)$ and 
$\gfr=\mathfrak{sp}_4(\C)$ as in Example \ref{runLiealgebras}, we lift the action of $a$ to $\gfr_h$ by
\begin{equation}\label{Clift}
A
\longmapsto 
\left(\begin{matrix} \mathbf{1}&\mathbf{0}\\ \mathbf{0}&-\mathbf{1}\end{matrix}\right)
A^{\widetilde T} 
\left(\begin{matrix} -\mathbf{1}&\mathbf{0}\\\mathbf{0}&\mathbf{1}\end{matrix}\right),\qquad
\end{equation}
where $\mathbf{1}$ denotes the identity matrix in $\mbox{Mat}_{2\times2}(\C)$ and
$A^{\widetilde T}$ is obtained from $A$ by reflecting in the northeast-southwest 
diagonal. One immediately checks that this induces the action $\alpha_1^\vee\leftrightarrow\alpha_3^\vee$ on $\tfr_h$. Moreover,
$$
\mbox{with }\quad \mathbf{N}:=\left(\begin{matrix}0&1\\0&0\end{matrix}\right),\qquad
e_{\alpha_1} := 
\left(\begin{matrix} \mathbf{N}&\mathbf{0}\\ \mathbf{0}&\mathbf{1}\end{matrix}\right),\quad
e_{\alpha_2} := 
\left(\begin{matrix} \mathbf{0}&\mathbf{N}^T\\ \mathbf{0}&\mathbf{1}\end{matrix}\right),\quad
e_{\alpha_3} := 
\left(\begin{matrix} \mathbf{0}&\mathbf{0}\\ \mathbf{0}&-\mathbf{N}\end{matrix}\right)
$$
generate a Chevalley basis, where $e_{-\alpha}=e_\alpha^T$ for all $\alpha\in R_h$.
On this basis, \eqref{Clift} implements the action \eqref{splitaction}. We then have
$$
\gfr_h^\Cc = \left\{\left. \left(\begin{matrix}- d^{\widetilde T}& b\\ c&d\end{matrix}\right)
\right| b,\,c,\,d\in \mbox{Mat}_{2\times2}(\C),\, b=b^{\widetilde T},\, c=c^{\widetilde T} \right\}.
$$
Using the description of $\gfr$ from Example \ref{runLiealgebras}, we thus have
$\gfr_h^\Cc\cong\gfr$ under the automorphism which permutes rows and columns
by the transposition $(1,2)$. 
We continue to use the notations introduced in Example \ref{runLiealgebras}.
Then \eqref{eq:relationadjquotients} is obtained from $\chi\colon\gfr\longrightarrow\tfr/W$,
$\chi_h\colon\gfr_h\longrightarrow\tfr_h/W_h$, where 
\begin{equation}\label{chisexterior}
\xi\circ\chi(A) = (\mbox{tr}(\Lambda^2A),\, \mbox{tr}(\Lambda^4A)),\quad
\xi_h\circ\chi_h(A) = (\mbox{tr}(\Lambda^2A),\, \mbox{tr}(\Lambda^3A),\, \mbox{tr}(\Lambda^4A)),
\end{equation}
with $\mbox{tr}(\Lambda^3A)=0$ for all $A\in\mathfrak{sp}_4(\C)$.
\end{ex}

\subsection{Folding of simple complex Lie groups}\label{ss:foldingsimpleliegroups}
Let $G_h$ be the simple \emph{adjoint} complex Lie group with irreducible $\mathrm{ADE}$-Dynkin diagram 
$\Delta_h$ and $\Cc=\langle a \rangle$ as before. 
In the following we fix a maximal torus $T_{h}\subset G_{h}$. 
Its character and its cocharacter lattice are given by
\begin{align}
\bLambda^{\!\!\vee}_h&=\mathrm{Hom}(T_{h},\C^*)\cong\langle R_h\rangle_{\Z}, \label{characterdef}\\ 
\bLambda_h& = \mathrm{Hom}(\C^*,T_{h}).\label{cocharacterdef}
\end{align}
Here, $\bLambda^{\!\!\vee}_h\cong\langle R_h\rangle_{\Z}$
follows since $G_{h}$ is of adjoint type. 
This further implies that $\mathrm{Aut}(G_h)\cong\mathrm{Aut}(\gfr_h)$,
and thus our choice of splitting of the sequence 
\eqref{eq:exactseq} yields $\Cc\hookrightarrow \mathrm{Aut}(G_h)$. 

\begin{prop}\label{foldedgroup}\cite[Proposition 10.3.5]{Springer}
	The identity component $(G_h^\Cc)^\circ$ of the subgroup $G_h^\Cc\subset G_h$ 
	fixed by the automorphism group $\Cc\subset \mathrm{Aut}(G_h)$ is the simple adjoint complex Lie group 
	$G$ with Dynkin diagram $\Delta_{h,\Cc}$. 
	In particular, its character and cocharacter lattices satisfy 
	\begin{equation}\label{foldedcharacter}
	\bLambda^{\!\!\vee}=(\bLambda_h^{\!\!\vee})_{\Cc},\quad \bLambda=\bLambda_h^\Cc.
	\end{equation}
\end{prop}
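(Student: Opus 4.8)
The plan is to reduce Proposition~\ref{foldedgroup} to the Lie-algebra statement Proposition~\ref{prop:isoinv} together with a lattice computation, following the structure of \cite[Proposition~10.3.5]{Springer}. First I would observe that since $G_h$ is of adjoint type, $\mathrm{Aut}(G_h)\cong\mathrm{Aut}(\gfr_h)$ and the chosen splitting of \eqref{eq:exactseq} makes the $\Cc$-action on $G_h$ differentiate to the $\Cc$-action on $\gfr_h$ from \eqref{splitaction}. Consequently $\mathrm{Lie}(G_h^\Cc)=\gfr_h^\Cc$, and by Proposition~\ref{prop:isoinv} this is $\gfr(\Delta_{h,\Cc})$, a simple Lie algebra. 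Since $(G_h^\Cc)^\circ$ is a connected algebraic group with simple Lie algebra $\gfr=\gfr(\Delta_{h,\Cc})$, it is automatically semisimple; it remains to pin down its isogeny type, i.e.\ to show it is adjoint, which is exactly the content of \eqref{foldedcharacter}.

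Next I would identify the maximal torus. The torus $T_h^\Cc$ (with $T_h$ the $a$-stable maximal torus whose Lie algebra is $\tfr_h$) has Lie algebra $\tfr_h^\Cc=\tfr$, which by the argument in the proof of Proposition~\ref{prop:isoinv} is a Cartan subalgebra of $\gfr$; hence $(T_h^\Cc)^\circ$ is a maximal torus of $(G_h^\Cc)^\circ$. Now I compute its character lattice: restriction of characters gives a map $\bLambda_h^{\!\!\vee}=\mathrm{Hom}(T_h,\C^*)\to\mathrm{Hom}((T_h^\Cc)^\circ,\C^*)$, and I claim this identifies the target with the coinvariants $(\bLambda_h^{\!\!\vee})_\Cc$ modulo torsion. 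The point is that a character of $T_h$ restricted to $(T_h^\Cc)^\circ$ depends only on its image under the averaging/norm map, and two characters have the same restriction iff they differ by something in $(1-a)\bLambda_h^{\!\!\vee}$ after inverting the order of $a$; dually, $\mathrm{Hom}(\C^*,(T_h^\Cc)^\circ)=\bLambda_h^\Cc$ because a cocharacter of $T_h$ lands in $(T_h^\Cc)^\circ$ exactly when it is $a$-fixed, and every such arises. Since $G_h$ adjoint means $\bLambda_h^{\!\!\vee}=\langle R_h\rangle_\Z$, we get $\bLambda^{\!\!\vee}=(\langle R_h\rangle_\Z)_\Cc$; I would then invoke Lemma~\ref{lem:folding} (the coinvariant root system $R_{h,\Cc}$ spans $V_{h,\Cc}$ and is the root system of $\Delta_{h,\Cc}$) to conclude $\bLambda^{\!\!\vee}=\langle R\rangle_\Z$, i.e.\ $(G_h^\Cc)^\circ$ is adjoint, which forces it to be the adjoint group $G$ of $\Delta_{h,\Cc}$. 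The cocharacter identity $\bLambda=\bLambda_h^\Cc$ is then dual.

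I expect the main obstacle to be the torsion-freeness and surjectivity bookkeeping in the character-lattice computation: the naive restriction map $\bLambda_h^{\!\!\vee}\to\mathrm{Hom}((T_h^\Cc)^\circ,\C^*)$ need not be surjective on the nose (a character of the subtorus may only extend after multiplying by $|a|$), and one must check there is no torsion appearing, i.e.\ that $\langle R_h\rangle_\Z/(1-a)\langle R_h\rangle_\Z$ is torsion-free so that it genuinely equals $\langle R\rangle_\Z$. This can be handled by working with the exact sequences relating $T_h^\Cc$, $T_h/(1-a)T_h$ and their (co)character lattices, using that $T_h$ is a torus (so $\mathrm{Hom}(-,\C^*)$ is exact on tori up to the relevant finite groups) and that $(1-a)$ acts invertibly on the torsion of the relevant quotient since it kills the $\Cc$-coinvariant part. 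Alternatively, and more cleanly, one can bypass the direct lattice computation by citing \cite[Proposition~10.3.5]{Springer} for the statement that $(G_h^\Cc)^\circ$ is the adjoint group of type $\Delta_{h,\Cc}$, and then read off \eqref{foldedcharacter} from the general description of the (co)character lattices of the adjoint group, matching it against $\bLambda_h^\Cc$ and $(\bLambda_h^{\!\!\vee})_\Cc$ via Lemma~\ref{lem:folding} and Proposition~\ref{p:foldingweyl}.
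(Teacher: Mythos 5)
The paper gives no proof of this proposition at all: it is stated with a citation to \cite[Proposition 10.3.5]{Springer} and the authors move straight on to Example \ref{rungroupfold}. So your proposal is not competing with an argument in the text; it is supplying one, and the argument you sketch is essentially correct and is the natural way to deduce the statement from the material the paper does prove (Proposition \ref{prop:isoinv} for the Lie algebra, Lemma \ref{lem:folding} for the coinvariant root system). Two remarks on the bookkeeping you flag as the ``main obstacle''. First, the surjectivity worry is not actually an issue for the group you work with: $(T_h^\Cc)^\circ$ is a subtorus of $T_h$, and restriction of characters from a torus to a subtorus is always surjective (the quotient is again a torus, so $0\to X^*(T_h/(T_h^\Cc)^\circ)\to X^*(T_h)\to X^*((T_h^\Cc)^\circ)\to 0$ is exact); the extension-only-after-multiplying-by-$|a|$ phenomenon would arise only if you restricted to the possibly disconnected $T_h^\Cc$. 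Second, the torsion-freeness of $(\bLambda_h^{\!\!\vee})_\Cc=\langle R_h\rangle_\Z/(1-a)\langle R_h\rangle_\Z$ has a one-line proof you should make explicit: $a$ acts by permuting the simple roots, which form a $\Z$-basis of $\langle R_h\rangle_\Z$, so the coinvariants of this permutation module are free on the set of $\Cc$-orbits of simple roots; this simultaneously identifies them with $\langle R_{h,\Cc}\rangle_\Z$. With those two points settled, the kernel of restriction is exactly the saturation of $(1-a)\bLambda_h^{\!\!\vee}$, which by freeness of the quotient is $(1-a)\bLambda_h^{\!\!\vee}$ itself, and your conclusion that $X^*((T_h^\Cc)^\circ)$ equals the root lattice of $\Delta_{h,\Cc}$ (hence that $(G_h^\Cc)^\circ$ is adjoint, using the root space decomposition \eqref{eq:rootspace} to see that its roots are precisely $R_{h,\Cc}$) goes through. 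Your fallback of simply citing Springer is of course what the paper actually does.
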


\begin{ex}\label{rungroupfold}
For Dynkin type $\Delta_h=A_3$, the adjoint form is $G_h=PSL_4(\C)$, while
$SL_4(\C)$ is the simply connected form. On the other hand, the adjoint form for
$\Delta=C_2$ is $G=Sp_4(\C)\slash\{\pm1\}=PSp_4(\C)$, while $Sp_4(\C)$ is the simply
connected form. We lift the action of $\Cc$ used in Example \ref{runchis} to $G_h$,
and  we obtain
$$
\forall\, A\in SL_4(\C)\colon\quad
A\mapsto - \widetilde J (A^T)^{-1} \widetilde J, \quad\mbox{where } 
\widetilde J
:=\left(\begin{matrix}\mathbf{0}&\mathbf{Q}\\-\mathbf{Q}&\mathbf{0}\end{matrix}\right)
=- \widetilde{J}^{-1},\;
\mathbf{Q}:= \left(\begin{matrix}0&1\\1&0\end{matrix}\right).
$$
Here, 
$$
\widetilde J = P J P
\quad\mbox{ with }\quad 
P= \left(\begin{matrix}\mathbf{Q}&\mathbf{0}\\\mathbf{0}&\mathbf{1}\end{matrix}\right),\;
J =\left(\begin{matrix}\mathbf{0}&\mathbf{1}\\-\mathbf{1}&\mathbf{0}\end{matrix}\right),
$$
so $J$ is the standard symplectic matrix. 
It follows that $(G_h)^\Cc$ arises from $G$ by conjugation
with $P$. 
Hence $(G_h)^\Cc\cong G$ holds.
\end{ex}

\subsection{Folding of $\mathrm{ADE}$ singularities and Slodowy slices} \label{sec:Slodowy}
The construction of quasi-projective Calabi--Yau threefolds of \cite{DDP} starts from semi-universal $\C^\ast$-deformations
of $\mathrm{ADE}$ surface singularities. Following \cite{Slo}, there is
an elegant description of the latter in terms of the associated simple complex Lie algebras, which is compatible
with folding. 
In this section, we recall that construction, mainly following the original work \cite{Slo}.
 
\begin{dfn} 
Let $\Delta=\Delta_{h,\Cc}$ denote\footnote{This definition goes back to Slodowy \cite[\S6.2]{Slo}, 
who however uses the dual notion, 
i.e. $\Cc$-invariants instead of $\Cc$-coinvariants to label the singularities. Our conventions are better 
adapted to the folding 
of simple complex Lie algebras and Hitchin systems.\label{footnote:Slo}} an irreducible Dynkin 
diagram with $\Delta_h$ an 
irreducible Dynkin diagram of type $\mathrm{ADE}$ and $\Cc\subset \mathrm{Aut}(\Delta_h)$ as before.
A $\Delta$-singularity is a pair $(Y,H)$ such that
\begin{itemize}
\item
$Y=(Y,0)$ is (a germ of) an isolated surface singularity of type $\Delta_h$; in particular, the dual of the resolution 
graph of the minimal resolution $\widetilde{Y}\longrightarrow Y$ coincides with $\Delta_h$, 
\item 
$H\subset \mathrm{Aut}(Y)$ is a finite subgroup with $H\cong \Cc$ which acts freely on $Y-\{0\}$, 
\item 
the induced action of $H$ on the dual of the resolution graph coincides with the $\Cc$-action on $\Delta_h$. 
\end{itemize}
If $H=\{\id\}$, then we simply write $Y$ instead of $(Y,\{\mathrm{id} \})$.
\end{dfn}
Every $\Delta$-singularity is quasi-homogeneous, i.e.~there is a natural $\C^*$-action on each $\Delta$-singularity $(Y,H)$. 
Moreover, every $\Delta$-singularity $(Y,H)$ admits a semi-universal $\C^*$-deformation according to \cite[\S2.4-2.7]{Slo}. 
By this, following \cite[Definition 2.6]{Slo}, we mean a (formal) $(\Cc\times\C^\ast)$-equivariant deformation $\zeta\colon\mathcal Y\to B$ of $Y$ with trivial action
of $\Cc$ on the base such that every other deformation with these properties allows a $(\Cc\times\C^\ast)$-equivariant morphism to $\zeta$ whose differential on the base is uniquely determined.

If the isolated surface singularity $Y=(Y,0)$ is defined by a 
quasi-homogeneous polynomial $f\in\C[x,\,y,\,z]$ of weights\footnote{Clearly, 
these weights are not unique and we fix one choice here. As we shall see, 
cf.~Remark \ref{rem:Cstar}, the natural choice of coprime weights is in general 
not compatible with the Lie algebraic description of $\Delta$-singularities.}
$(\mathsf{w}_x, \mathsf{w}_y, \mathsf{w}_z)$ and degree 
$\deg(f)=\mathsf{w}_x+\mathsf{w}_y+\mathsf{w}_z$, 
then the work of Schlessinger \cite{Sch68}, Elkik \cite{Elkik-Algebraization} and Rim \cite[4.14]{Ri72} gives a direct way to construct a semi-universal 
$\C^*$-deformation of $Y$.
The construction uses the Jacobian ring $J_f:= \C[x,y,z]/(\partial_x f,\partial_y f, \partial_z f)$ 
of $f$ as follows.
In our situation, $J_f$ is a finite dimensional vector space, $J_f\cong\C^r$.
If $(g_1,\,\ldots,\,g_r)$ with quasi-homogeneous $g_j\in\C[x,y,z]$ for $j\in\{1,\,\ldots,\,r\}$ 
represents a $\C$-basis of $J_f$, then 
\begin{eqnarray*}	
 \mathcal{Y}_h:=\left\{((x,y,z),b)\in \C^3\times\C^r ~|~ 
f(x,y,z)+\smash{\sum_{j=1}^r}\vphantom{\sum} b_j g_j(x,y,z)=0 \right\}, &
& \hspace*{-1em} B_h:=\C^r, \\	
\zeta\colon\;\mathcal{Y}_h\longrightarrow B_h, &&
		((x,y,z),b)\mapsto b
\end{eqnarray*}
defines a semi-universal $\C^\ast$-deformation of $Y$, cf. \cite[Theorem 2.4]{Slo}. 
Note that $B_h\cong J_f$ by construction.
The $\C^\ast$-action on $Y$ is naturally extended to $\mathcal Y_h$ 
by assigning the weight $\deg(f) -\deg(g_j)$ to $b_j$ for $j\in\{1,\,\ldots,\,r\}$.
Then $\mathcal{Y}_h \to B_h$ is a semi-universal $\C^*$-deformation of 
$Y$ by \cite[Theorem 2.5]{Slo} and its proof.
If $(Y,H)$ is any $\Delta$-singularity, then the $H$-action on $Y$ extends 
to an $H$-action on $\mathcal{Y}_h$ and $B_h$. 
	The restriction $\zeta\colon \zeta^{-1}(B_h^H)\to B_h^H$ is 
	then a semi-universal $\C^*$-deformation of $(Y,H)$, see \cite[\S2.6]{Slo}.

Let us illustrate this construction by carrying it out for our running example.
\begin{ex}\label{ex:A3-sing}
	We consider $\mathrm{C}_2=\mathrm{A}_{3,\Cc}$ for $\Cc=\Z/2\Z$.
	The $\mathrm{A}_3$-surface singularity is given by
	\begin{equation}
	Y=\{ (x,y,z)\in \C^3~|~x^4-yz=0\}. 
	\end{equation}
	The defining polynomial $f(x,y,z):= x^4-yz$ is quasi-homogeneous with respect to the $\C^*$-action $(\lambda, (x,y,z))\mapsto(\lambda x, \lambda^2y, \lambda^2z)$.
	Consider the group $H\cong \Cc$ generated by the automorphism $(x,y,z)\mapsto (-x,z,y)$. 
	Then $(Y,H)$ is a $\Delta$-singularity with $\Delta=\mathrm{C}_2$, as one checks by an explicit calculation, see \cite[p.\ 77]{Slo}. 
	
	The Jacobian ring for this singularity is $J_f=\C[x,y,z]/(x^3,\, y,\, z)$, so $(x^2,\, x,\, 1)$ represents a quasi-homogeneous basis. 
	Hence a semi-universal $\C^*$-deformation of the $\mathrm{A}_3$-singularity is given by
		\begin{eqnarray}\label{YhA3}	
		\mathcal{Y}_h=\{(x,y,z,b_2,b_3,b_4)\in \C^6 ~|~ x^4-yz  + b_2x^2  + b_3x  + b_4=0 \} &\longrightarrow& \C^3, \nonumber\\
		(x,y,z,b_2,b_3,b_4)&\mapsto& (b_2,b_3,b_4)
		\end{eqnarray}
	with $\C^\ast$-action
			\begin{equation}\label{eq:suC}
	\forall\,\lambda\in \C^*\colon\qquad
	\lambda\cdot(x,y,z,b_2,b_3,b_4)=(\lambda x,\lambda^2 y, \lambda^2 z,\lambda^2 b_2,\lambda^3 b_3, \lambda^4 b_4).
	\end{equation}
	The action of $H$ is  extended to $\mathcal Y_h$ by
	\begin{equation}\label{eq:suHaction}
	(x,y,z,b_2,b_3,b_4)\longmapsto (-x,z,y,b_2,-b_3,b_4).
	\end{equation}
		The preimage under $\mathcal{Y}_h\to \C^4$ of the $H$-fixed point locus in $\C^4$ is a semi-universal $\C^*$-deformation of the $\mathrm{C}_2$-singularity $(Y,H)$, namely
		\begin{eqnarray*}
		\mathcal{Y}=\{x,y,z,b_2,b_4)\in \C^5 ~|~ 
		x^4-yz  +  b_2x^2 +  b_4=0 \} 
		&\to& \C^2, \\
		(x,y,z,b_2,b_4)&\mapsto& (b_2,b_4).
		\end{eqnarray*}
	The $\C^*$- and $H$-action are the restrictions of \eqref{eq:suC} and \eqref{eq:suHaction} respectively. 
	By construction, the $H$-action is trivial on the base, so $H$ acts on all fibers.
	\end{ex}
The direct construction of semi-universal $\C^*$-deformations is useful in explicit 
computations, see Examples \ref{ex:A3cys} and \ref{ex:g2}.
The Lie-theoretic construction due to Brieskorn \cite{Bri} (for $\Delta_h$-singularities) 
and Slodowy \cite[\S8]{Slo} (for all $\Delta$-singularities) is very convenient for our study of the relation between Calabi--Yau and Hitchin integrable systems, cf. Section \ref{s:crepantresol}.
We emphasize that with appropriate choices of $\C^\ast$-actions (cf.~Remark \ref{rem:Cstar}) 
both constructions give isomorphic $\C^*$-deformations by semi-universality, 
see \cite[Proposition 2.9]{Sch68}. 
In Appendix \ref{app:sh} we work out an explicit isomorphism for our running example.

To review Slodowy's construction, let $\gfr=\gfr(\Delta)$ denote the simple complex Lie algebra determined 
by $\Delta$, and let $x\in \gfr$ be a subregular nilpotent element. 
Choose an $\mathfrak{sl}_2$-triple $(x,y,h)$
for $x$ with semisimple element $h\in \mathfrak{g}$ such that $[h,x]=2x,\, [h,y]=-2y,\, [x,y]=h$, so 
$\langle x,y,h\rangle_\C\cong\mathfrak{sl}(2,\C)$.
Then a so-called Slodowy slice through $x$ is given by
\begin{equation*}
 S:=x+\ker \mathrm{ad}_y\subset \gfr,
\end{equation*}
with the restriction $\sigma:= \chi_{|S}$ of the adjoint quotient,
\[
\sigma\colon S\longrightarrow \tfr/W.
\]
We note that any two nilpotent subregular $x\in\gfr$ are conjugate
under the adjoint action of $G$ \cite[\S 3.10]{Steinberg}, as are any two
$\mathfrak{sl}_2$-triples that contain $x$ \cite[Corollary 3.6]{Kostant}.
The Slodowy slice $S$ carries actions by our cyclic group $\Cc$ and by $\C^\ast$, 
which commute with each other, as follows.
To obtain the $\C^\ast$-action, let 
$\exp\colon\gfr\longrightarrow G$ denote the exponential map 
from the Lie algebra to the Lie group stemming from the construction of 
one-parameter subgroups.
Observe that $\mathrm{ad}_{th}(x)=2tx$ and $\lambda=e^t\in\C^\ast$ yield $\mathrm{Ad}_{\exp(th)}(x)=\exp(\mathrm{ad}_{th})(x)=\lambda^2x$ because $[h,x]=2x$. 
Thus there is a $\C^*$-action on $S$ given by
\begin{equation}\label{CstaronS}
\forall \lambda=e^t\in\C^\ast,\; v\in S\colon\quad
\lambda\cdot v
=\lambda^2 
\mathrm{Ad}_{\exp(-th)}(v).
\end{equation}
\begin{rem}\label{rem:Cstar}
By construction, the adjoint quotient $\sigma\colon S\longrightarrow \tfr/W$   is $\C^\ast$-equivariant
with respect to the action \eqref{CstaronS} on $S$ and the action with weights $2d_j$ on 
the coordinate ring $\C[\chi_1,\ldots,\chi_r]$ of $\tfr/W$, where 
$\epsilon_j=d_j-1$ are the exponents of $\gfr$ as before.
In particular, $\sigma$ is \emph{not} $\C^*$-equivariant 
with respect to the standard $\C^*$-action on $\tfr/W$ with weights 
$d_j$. 

As we shall see in greater detail in Theorem \ref{thm:Slosli}, equipped 
with this $\C^\ast$-action the Slodowy slice $S$ is 
$\C^\ast$-equivariantly isomorphic to the corresponding unfolding 
constructed from the Jacobian ring $J_f$, $f\in\C[x,y,z]$, of an 
isolated surface singularity of $\mathrm{ADE}$-type as described before Example \ref{ex:A3-sing}.
For this to be true, the weights of $f$ and hence the $\C^*$-action 
on $J_f$ have to be chosen accordingly  (see also \cite[\S 7.4, Proposition 2]{Slo}). 

Indeed, let $J_f=\C[x,y,z]/(\partial_xf, \partial_yf, \partial_zf)$ as before, and denote by 
$(\mathrm w_x, \mathrm w_y, \mathrm w_z)$ coprime weights of $x, y, z$ such that $f$ is quasi-homogeneous.
If $\gfr$ is of type $A_{2k}$, $k\in\mathbb N$, then one needs to use the weights 
$(\mathrm w_x, \mathrm w_y, \mathrm w_z)$ for $x, y, z$. In all other cases, 
weights $(2\mathrm w_x, 2\mathrm w_y, 2\mathrm w_z)$ must be used for 
$x, y, z$, as we confirm explicitly for our running example in Appendix \ref{app:sh}.
\end{rem}

For the $\Cc$-action, consider the simple adjoint complex Lie 
group $G$ with Lie algebra $\gfr$, and its subgroup 
\begin{equation}\label{innerC}
C(x,y):=\{ g\in G ~|~\mathrm{Ad}_g(x)= x, 
\quad \mathrm{Ad}_g(y) =y\} \quad\subset\quad G,
\end{equation}
which acts on $S$ by the adjoint representation, as one checks by a direct calculation.
Then $\mathrm{Aut}(\Delta_h)$ admits an embedding $\mathrm{Aut}(\Delta_h)\hookrightarrow C(x,y)$ 
	(cf. \cite[Proposition 7.5]{Slo}), so $\Cc$ acts on $S$ via the adjoint action. 
	Moreover, the induced map $\mathrm{Aut}(\Delta_h)\to C(x,y)/C(x,y)^\circ$ 
	to the component group of $C(x,y)$ is bijective by \cite[Proposition 7.5]{Slo}. 
	In summary, $\Cc$ acts on $S$ by inner automorphisms.
	Indeed, for $\gfr\cong\gfr_h^\Cc$ with $\Cc\neq\{\mathrm{id}\}$, $\gfr$ does not have any non-trivial outer
	automorphisms.
	
\begin{rem}\label{rem:CA}
	If $\Delta_h$ is a simply-laced Dynkin diagram and $S_h=x_h+\ker \mathrm{ad}_{y_h}\subset \gfr_h$ is a Slodowy slice over $\tfr_h/W_h$, then $C(x_h,y_h)/C(x_h,y_h)^\circ$ is trivial by \cite[\S 7.5 Lemma 4]{Slo}.
	In other words, one cannot choose an $\mathfrak{sl}_2$-triple $(x_h,y_h,h_h)$ 
	in $\gfr_h$ which is invariant under the natural action of any non-trivial Dynkin graph automorphisms of $\Delta_h$, lifted to the inner automorphisms of $\gfr_h$.
	In fact, one cannot choose $(x_h,y_h,h_h)$ to be invariant 
	under \emph{any} lift 
	of $\Cc$ to $\mbox{Aut}(\gfr_h)$ that yields $\gfr_h^\Cc\cong\gfr$, see Remark \ref{incompatible}.

	If $\Cc$ is non-trivial, then the $\Cc$-action on $S=x+\ker\mathrm{ad}_y$ introduced through \eqref{innerC} must be carefully distinguished from the $\Cc$-action
	on $\gfr_h$ used for folding, even though $S\subset \gfr$ and $\gfr=\gfr_h^\Cc$, i.e. $\gfr\hookrightarrow\gfr_h$. Indeed, the relation to folding is more delicate: 
	let
	\begin{equation}\label{CAdef}
	\CA(x_h,y_h):=\{ \phi\in \mathrm{Aut}(\gfr_h)~|~\phi(x_h)=x_h,\, \phi(y_h)=y_h \},
	\end{equation}
	including all automorphisms of $\gfr_h$ that fix $x_h$ and $y_h$, not just the inner automorphisms as in \eqref{innerC}.
	Then by \cite[\S 7.6, Lemma 2]{Slo} the group of connected components of $\CA(x_h,y_h)$ satisfies
	\begin{equation}\label{eq:isoCA}
	\CA(x_h,y_h)/\CA(x_h,y_h)^\circ \cong \mathrm{Aut}(\Delta_h),
	\end{equation}
	and if the group $\mathrm{Aut}_D(\Delta_h)$ of Dynkin graph automorphisms (see footnote \ref{graphauto}) is non-trivial, then \eqref{eq:isoCA} lifts uniquely 
	to an embedding  
	$\mathrm{Aut}(\Delta_h)\hookrightarrow \CA(x_h,y_h)$.
	In particular, if $\Cc\subset\mathrm{Aut}_D(\Delta_h)$ is a cyclic subgroup, then $\Cc$ acts on $S_h$ by outer automorphisms.
	Note that $\Cc$ does not preserve all fibers of 
	$\sigma_h\colon S_h\longrightarrow \tfr_h/W_h$, but it does preserve the fibers over 
	$(\tfr_h/W_h)^\Cc$, cf. the following Theorem \ref{thm:Slosli} \ref{thm:sloc}.
\end{rem}
For $\Delta$-singularities we now have:

\begin{thm}\label{thm:Slosli}
Let $\tfr\subset \gfr$ be a Cartan subalgebra with Weyl group $W$ and adjoint quotient $\chi\colon\gfr\to \tfr/W$. 
Then the restriction $\sigma:=\chi_{|S}\colon S\longrightarrow \tfr/W$ satisfies the following: 
\begin{enumerate}[label=\alph*)]
\item \cite[$\S 8.3$, $\S 8.4$]{Slo}
The pair $(S_{\bar{0}}, x)$, where $S_{\bar{0}}=\sigma^{-1}(\bar{0})$, together with the induced 
$\Cc$-action is a $\Delta$-singularity. 
\item\label{thm:slob} \cite[\S 8.7]{Slo}
The morphism $\sigma\colon S\longrightarrow \tfr/W$ with the above $(\C^*\times \Cc)$-action 
is a semi-universal $\C^*$-deformation of the $\Delta$-singularity $S_{\bar{0}}$. 
For any $\bar{t}\in \tfr/W$ the singularities of the complex surface $\sigma^{-1}(\bar{t})$ are precisely the finitely many non-regular elements in $\sigma^{-1}(\bar{t})\subset \gfr$, all of which are subregular.
\item\label{thm:sloc} \cite[\S 8.8]{Slo} 
Let $\tfr_h^\Cc\cong \tfr$ and $\gfr_h^\Cc\cong \gfr$ under folding. Moreover, let 
$\sigma_h\colon S_h\longrightarrow \tfr_h/W_h$ be a Slodowy slice with the 
$\Cc$-action defined in Remark \ref{rem:CA} and the $\C^*$-action defined above. 
	Then these two actions commute, and $\sigma_h^{-1}((\tfr_h/W_h)^\Cc)$ is $(\C^*\times \Cc)$-equivariantly 
	isomorphic to $S$ over $(\tfr_h/W_h)^\Cc\cong \tfr/W$.
\end{enumerate}
\end{thm}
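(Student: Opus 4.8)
\textbf{Proof plan for Theorem \ref{thm:Slosli}.}

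The plan is to rely on Slodowy's original results \cite{Slo} for the bulk of the statement and to organize the argument around the three parts, emphasizing the compatibility with our conventions (which differ from Slodowy's by the invariant/coinvariant duality, cf.\ footnote \ref{footnote:Slo}). For part a), the key inputs are that $x$ is subregular nilpotent, that any two such are $G$-conjugate \cite[\S3.10]{Steinberg}, and Slodowy's analysis in \cite[\S8.3, \S8.4]{Slo}: the fibre $S_{\bar 0}=\sigma^{-1}(\bar 0)$ is the intersection of the nilpotent cone with the Slodowy slice, which is a surface with an isolated singularity at $x$, and the dual resolution graph of its minimal resolution is $\Delta_h$ (this is Brieskorn's theorem \cite{Bri} for the simply-laced case, extended by Slodowy). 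The $\Cc$-action comes from the embedding $\mathrm{Aut}(\Delta_h)\hookrightarrow C(x,y)$ of \cite[Proposition 7.5]{Slo}; one must check that it acts freely on $S_{\bar 0}\setminus\{x\}$ and induces on the resolution graph precisely the graph automorphism $a$, so that $(S_{\bar 0},x)$ together with the $\Cc$-action is a $\Delta$-singularity in the sense of the definition above. This last point — matching the geometric $\Cc$-action on the exceptional divisor with the combinatorial one on $\Delta_h$ — is exactly what Slodowy establishes in \cite[\S8.4, \S8.5]{Slo}, so it suffices to cite it, after noting that our $\Delta=\Delta_{h,\Cc}$ corresponds to his $\Delta$ read through the duality.

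For part b), I would invoke \cite[\S8.7]{Slo}: the deformation $\sigma\colon S\to\tfr/W$ is flat with $S_{\bar 0}$ as central fibre, the $\C^*$-action \eqref{CstaronS} is compatible with $\sigma$ and contracts $S$ to $x$, and by semi-universality of $\C^*$-deformations of quasi-homogeneous singularities (Schlessinger--Elkik--Rim, as recalled before Example \ref{ex:A3-sing}) it suffices to check that the Kodaira--Spencer map $T_{\bar 0}(\tfr/W)\to T^1_{S_{\bar 0}}$ is an isomorphism. Slodowy proves this using the identification of $\tfr/W$ with the base of the semi-universal deformation via the adjoint quotient; on the $\C^*$-weights this forces the weights $2d_j$ on $\C[\chi_1,\dots,\chi_r]$ rather than $d_j$, as recorded in Remark \ref{rem:Cstar}. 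The statement that the singularities of each fibre $\sigma^{-1}(\bar t)$ are exactly its non-regular (hence subregular) elements is \cite[\S8.7]{Slo} and follows from the fact that $\chi$ is smooth precisely at regular elements together with transversality of $S$ to $G$-orbits. The $\Cc$-equivariance is then automatic since $\Cc$ acts on $S$ through $C(x,y)\subset G$, hence over the identity on $\tfr/W$; one records that the two actions commute because $h$ and the $\mathfrak{sl}_2$-triple may be chosen $\Cc$-fixed in $\gfr$ (the $\Cc$-action here being inner, as $\gfr=\gfr_h^\Cc$ has no outer automorphisms), so $\Cc$ commutes with $\mathrm{Ad}_{\exp(th)}$.

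Part c) is the substantive one and where the real work lies. Here the $\Cc$-action on $S_h\subset\gfr_h$ is the \emph{outer} one from $\CA(x_h,y_h)$ in \eqref{CAdef}--\eqref{eq:isoCA}, and it does \emph{not} preserve all fibres of $\sigma_h$, only those over $(\tfr_h/W_h)^\Cc$; this restriction is the content of the claim. I would follow \cite[\S8.8]{Slo}: one first checks the two actions on $S_h$ commute, which requires an $\mathfrak{sl}_2$-triple $(x_h,y_h,h_h)$ that is $\Cc$-fixed — possible only because we allow outer automorphisms (cf.\ Remark \ref{rem:CA}, and note the contrast with Remark \ref{rem:CA}'s observation that no \emph{inner} $\Cc$-fixed triple exists). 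Then $\sigma_h^{-1}((\tfr_h/W_h)^\Cc)$ is a $(\C^*\times\Cc)$-variety, and Slodowy constructs a $(\C^*\times\Cc)$-equivariant isomorphism from it to $S$ over the identification $(\tfr_h/W_h)^\Cc\cong\tfr/W$ of Corollary \ref{cor:tfr}(ii). The mechanism is that $\sigma_h^{-1}((\tfr_h/W_h)^\Cc)$, with its $\Cc$-action, is itself a semi-universal $\C^*$-deformation of the $\Delta$-singularity $(S_{h,\bar 0}^{(\Cc)}, x_h)$ — the central fibre being a $\Delta_h$-singularity with the $\Cc$-action prescribed by $a$ — and then part b) identifies it with $S$ by uniqueness of semi-universal $(\C^*\times\Cc)$-deformations. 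The main obstacle, and the place a careful reader should slow down, is verifying that the central fibre $\sigma_h^{-1}(\bar 0)$ with this outer $\Cc$-action really is a $\Delta$-singularity in our sense — i.e.\ that the outer automorphism acts on the resolution graph $\Delta_h$ as the graph automorphism $a$ and freely away from the singular point — since the outer nature of the action makes this genuinely different from the inner situation of part a); this is precisely what \cite[Proposition 7.5, \S8.8]{Slo} supply, modulo translating between Slodowy's invariant-labelled and our coinvariant-labelled conventions.
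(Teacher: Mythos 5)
The paper offers no proof of this theorem beyond the inline citations to Slodowy's \S8.3--8.8, and your plan is a correct and faithful elaboration of exactly those references, including the two points the paper itself flags as delicate: the doubling of the $\C^*$-weights (Remark \ref{rem:Cstar}) and the inner-versus-outer distinction for the $\Cc$-action on $S$ versus $S_h$ (Remark \ref{rem:CA}), together with the invariant/coinvariant convention translation of footnote \ref{footnote:Slo}. Your mechanism for part c) — commuting actions via the $\CA(x_h,y_h)$-fixed triple, then identification with $S$ by uniqueness of semi-universal $(\C^*\times\Cc)$-deformations — is precisely how the result is obtained in \cite[\S8.8]{Slo} and how the paper later uses it.
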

For the respective bases of semi-universal $\C^\ast$-deformations, the above
theorem implies the natural isomorphisms $J_f\cong\tfr_h/W_h$ 
and $J_f^\Cc\cong(\tfr_h/W_h)^\Cc\cong\tfr/W$ as $\C^*$-spaces.
\begin{rem}\label{incompatible}
	We emphasize that an isomorphism 
	$\sigma_h^{-1}((\tfr_h/W_h)^\Cc)\cong S$ as in 
	Theorem \ref{thm:Slosli} \ref{thm:sloc} cannot be induced by a Lie algebra homomorphism 
	$\gfr\longrightarrow \gfr_h$ if $\Cc\neq\{\id\}$, see \cite[\S 8.7 Remark 3)]{Slo}. 
	As was already mentioned in Remark \ref{rem:CA}, this is related to the fact that 
	there is no $\mathfrak{sl}_2$-triple $(x_h,\, y_h,\, h_h)$ in $\gfr_h$ which 
	is invariant under $\Cc$ and with  subregular nilpotent $x_h,\, y_h$. Indeed, such $x_h,\, y_h$ would yield Slodowy slices $S_h=x_h+\ker \mathrm{ad}_{y_h}\subset \gfr_h$ and 
	$S=x_h+\ker(\mathrm{ad}_{y_h})_{\mid\gfr_h^\Cc}\subset \gfr$ and a $\Cc$-equivariant
	map $S\hookrightarrow S_h$ induced by a Lie algebra homomorphism $\gfr\hookrightarrow\gfr_h$,
	contradicting the above-mentioned Remark 3) of \cite[\S8.7]{Slo}. In fact, as we have seen
	in Remark \ref{rem:CA}, $\Cc$	acts by inner automorphisms on $S$ but 
	by outer automorphisms on $S_h$.
	In this sense, folding of $\mathrm{ADE}$-singularities is not compatible 
	with folding of $\mathrm{ADE}$-Lie algebras. 
\end{rem}	
	For any irreducible Dynkin diagram $\Delta$ we will always 
	work with a Slowody slice $S$ inside the simple Lie algebra $\gfr(\Delta)$.
	In particular, the $\Cc$-action on $S$ is always defined by lifting $\Cc$ to $G$ 
	so that it acts by the adjoint action.

\begin{ex}\label{runslices}
To construct a Slodowy slice $S$ for $\mathfrak{g}(\mathrm{C}_2)=\mathfrak{sp}_4(\C)$, we take the 
$\mathfrak{sl}_2$-triple $(x,y,h)$ with 
$$
x=\left( \begin{matrix} 0&0&1&0\\ 0&0&0&1\\  0&0&0&0\\  0&0&0&0\end{matrix}\right),\quad
y=x^T=\left( \begin{matrix} 0&0&0&0\\ 0&0&0&0\\  1&0&0&0\\  0&1&0&0\end{matrix}\right),\quad
h=[x,y] = \mbox{diag}\left(1,1,-1,-1\right).
$$
Note that there does not exist any nilpotent subregular  $x\in \gfr_h=\mathfrak{sl}_4(\C)$
with $x\in\mathfrak{sp}_4(\C)$, in accord 
with the above Remark \ref{incompatible}. 
The element $x$ chosen above is subregular
in $\mathfrak{sp}_4(\C)$ but not in $\mathfrak{sl}_4(\C)$.
For the resulting Slodowy slice, by a direct comptutation we obtain from the definition of $S$ that
\begin{eqnarray*}
	S = x + \ker \mathrm{ad}_y
	&=& \left\{ \left.  s(v_1^-,\,v_2^-,\,v_1^+,\,v_2^+)
	\right|
	v_1^\pm,\, v_2^\pm\in \C \right\}\\ 
	&&  \mbox{ with } 
	s(v_1^-,\,v_2^-,\,v_1^+,\,v_2^+) :=
	\left( \begin{matrix} 0&v_1^-&1&0\\ -v_1^-&0&0&1\\ v_2^++v_2^-&v_1^+&0&v_1^-\\v_1^+&v_2^+-v_2^-&-v_1^-&0  \end{matrix} \right).
\end{eqnarray*}
The $\Cc$-action on $S$ is given by conjugation with an appropriate 
element $M$ in the group $C(x,y)$ defined in \eqref{innerC}. We find
$$
C(x,y)=\left\{ \left.\left( \begin{matrix} K&0\\0&K \end{matrix}\right) \right|
K\in\mbox{Mat}_{2\times2}(\C),\, K\cdot K^T=\mathbf{1}\right\},
$$
so $C(x,y)$ has two connected components which are distinguished by $\det(K)\in \{ \pm 1\}$.
We choose
$$
M=\left( \begin{matrix} \mathbf{Q}&0\\0&\mathbf{Q}\end{matrix}\right)
\qquad \mbox{ where }\qquad \mathbf{Q}=\left( \begin{matrix} 0&1\\1&0\end{matrix}\right),
$$
cf.~Example \ref{rungroupfold}, as a representative of the generator of $C(x,y)/C(x,y)^\circ\cong \Z/2\Z$.
The induced action on $S$ in terms of the $v_k^\pm$ is then $v_k^+\mapsto v_k^+,\, v_k^-\mapsto-v_k^-$.
Continuing to use the same notations as in Examples \ref{runLiealgebras} and \ref{runchis}, we find
\begin{eqnarray}\label{inhomslice}
\hspace*{-1em} \xi\circ\chi\colon 
S &\longrightarrow& \xi\left( \tfr/W\right)=\C^2
\nonumber\\[10pt]
\hspace*{0.5em}
s(v_1^-,\,v_2^-,\,v_1^+,\,v_2^+)
&\longmapsto&
( 2(v_1^-)^2-2v_2^+,\quad
(v_1^-)^4+2(v_1^-)^2v_2^+  +(v_2^+)^2 - (v_2^-)^2 -(v_1^+)^2).
\hphantom{\xi_h\circ\chi_h}\nonumber
\end{eqnarray}
The $\C^\ast$-action of $\lambda\in\C^\ast$, $\lambda=e^t$, on $S$ is obtained by conjugation with  the matrix $\exp(-th)=\mbox{diag}(\lambda^{-1},\lambda^{-1},\lambda,\lambda)$, followed by multiplication by $\lambda^2$. 
For the $v_k^\pm$ we thereby read off:
$$
\lambda\cdot(v_1^-,v_2^-,v_1^+,v_2^+)= (\lambda^2 v_1^-,\lambda^4 v_2^-,\lambda^4 v_1^+, \lambda^4 v_2^+).
$$
In Appendix \ref{app:sh} we construct a Slodowy slice $S_h$ in 
$\gfr_h=\mathfrak{sl}_4(\C)$ and construct an explicit isomorphism to 
$S$ over $(\tfr_h/W_h)^\Cc\cong \tfr/W$ in accordance with 
Theorem \ref{thm:Slosli} \ref{thm:sloc}.
\end{ex}
\section{Folding of Hitchin systems}\label{s:foldinghitchin}
From an algebraic-geometric point of view, Hitchin systems are a 
global analogue of the adjoint quotient as we explain in the next subsection.
The aim of this section is to introduce the notion of folding of Hitchin 
systems and to globalize the commutative diagram \eqref{eq:relationadjquotients} 
to Hitchin systems over the locus of smooth fibers, see Theorem \ref{thm:hitchinfibers}.

To make this precise, we  fix a Riemann surface $\cu$ of genus 
\fontdimen16\textfont2=4pt
$g_\cu\geq 2$. 
\fontdimen16\textfont2=2.5pt
Given any reductive complex algebraic group $H$, by $\mh_0(\cu,H)$ we denote 
the \emph{neutral component} of the moduli space of (S-equivalence classes of) semistable principal 
$H$-Higgs bundles or, equivalently, of polystable $H$-Higgs bundles of degree zero.
In fact, since we restrict to the neutral component throughout this work\footnote{We do so because so far, only this component has been related to Calabi--Yau integrable systems. 
Other components hopefully also arise geometrically, perhaps by G-flux.}, 
to unclutter the notation, we drop the index $0$ altogether and denote by 
$\mh(\cu,H):=\mh_0(\cu,H)$
the neutral component of the moduli space, hoping that this will not lead to confusion.
Then $\mh(\cu,H)$ is a quasi-projective variety \cite{Hit1,Nitsure,SimpsonModuliI,SimpsonModuliII} whose complex 
structure is induced by that of $\Sigma$ and $H$.
It contains the cotangent bundle $T^*\mathcal{N}(\cu,H)^{sm}$ of the smooth locus 
of the moduli space $\mathcal{N}(\cu,H)$ of semistable/polystable $H$-bundles as an open and dense subset.
The canonical symplectic structure on $T^*\mathcal{N}(\cu,H)^{sm}$ extends to a holomorphic symplectic structure $\omega_{\mh}$ on the smooth locus $\mh(\cu,H)^{sm}$ of $\mh(\cu,H)$ by \cite[Theorem 4.3]{BR}.

We next recall the structure of an algebraic integrable system on the 
holomorphic symplectic manifold $(\mh(\cu,H)^{sm},\omega_{\mh})$.

\subsection{Hitchin systems}
As above, let $H$ denote a reductive complex algebraic group.
Its associated Lie algebra is $\hfr$, and $\tfr\subset \hfr$ denotes 
a Cartan subalgebra with Weyl group $W$.
Then the variety $\mh(\cu,H)$ possesses the structure of an algebraic integrable system
\cite[\ldots]{Hit1, Hit2, BNR, BR, Faltings,DG}.
Recall that an algebraic integrable system is a flat and surjective holomorphic 
map $\pi\colon (M,\omega)\longrightarrow B$ from a holomorphic symplectic manifold 
$(M,\omega)$ to a complex manifold $B$ with the following properties:
\begin{itemize}
	\item 
	the smooth loci of the fibers of $\pi$ are Lagrangian in $(M,\omega)$,
	\item 
	there exists a Zariski-dense subset $B^\circ\subset B$ such that the restriction 
	of $\pi$ to $\pi^{-1}(B^\circ)$ has connected and compact fibers and admits a relative 
	polarization, that is, there exists a line bundle $\mathcal L\to M$ whose restriction to $\pi^{-1}(b)$ is ample for every $b\in B^\circ$.
\end{itemize} 
The holomorphic version of the Arnold--Liouville theorem implies that the fibers 
over $B^\circ$ are torsors for abelian varieties. 
For the construction of an algebraic integrable system on $\mh(\cu,H)$,
recall from Section \ref{sec:foldingCartan} that $\tfr/W$ carries a natural $\C^*$-action. 
Considering the canonical bundle $K_\cu$ of $\cu$ as a $\C^*$-bundle, we define the total space
\begin{equation*}
\Ub:=\mathrm{tot}(K_\cu\times_{\C^*} \tfr/W)
\end{equation*}
of the bundle $K_\cu\times_{\C^*} \tfr/W$.
Its space of holomorphic sections is denoted by 
$\Bb(\cu,H):=H^0(\cu,\Ub)$.
The adjoint quotient $\chi\colon \hfr\to\tfr/W$ induces the Hitchin map
\begin{equation*}
\bc\colon\mh(\cu,H)\longrightarrow\Bb(\cu,H).
\end{equation*}
On the smooth locus equipped with the holomorphic symplectic structure, this is an algebraic 
integrable system, called the $H$-Hitchin system. 
By the above, $\bc$ may be considered as global version of the adjoint quotient $\chi$.
Many properties of $\chi$ carry over to $\bc$.
For example, the $\C^\ast$-equivariance of $\chi$ is inherited by $\bc$. 
As mentioned in Section \ref{sec:foldingCartan}, one may choose homogeneous generators of $\C[\tfr]^{W}$ of degrees $d_1,\,\ldots,\, d_r$, where $d_1-1,\,\ldots,\, d_r-1$ are the exponents of $\hfr$.
One then obtains a convenient decomposition of $\Bb:=\Bb(\cu,H)$ as
\begin{equation}\label{Bdecomposes}
\Bb\cong\bigoplus_{j=1}^{r} H^0(\cu, K_\cu^{d_j}) .
\end{equation}
Moreover, $\bc$ admits (Lagrangian) sections \cite[\S 5]{Hit3},  so-called Hitchin sections.

In accord with our conventions in Section \ref{s:folding}, we write $H=G$ if $H=(G_h^\Cc)^\circ$ arises from folding a simple \emph{adjoint} complex Lie group $G_h$ with Dynkin diagram of ADE type as in Proposition \ref{foldedgroup}.
If we choose $H=G_h$, then we decorate all our notations with the subscript $h$.
Now the inclusion $\tfr/W\cong (\tfr_h/W_h)^\Cc \hookrightarrow \tfr_h/W_h$ 
obtained in Corollary \ref{cor:tfr} induces the isomorphism 
\begin{equation}\label{BisBhC}
\iota\colon\Bb \stackrel{\cong}{\longrightarrow} \Bb_h^\Cc 
\qquad\mbox{ with } \qquad
\Bb=\Bb(\cu,G),\quad \Bb_h=\Bb(\cu,G_h), \qquad \Bb_h^\Cc \subset \Bb_h.
\end{equation}
The Hitchin sections mentioned above  are the analogues of the Kostant sections
of $\chi$ and $\chi_h$, respectively. In Theorem \ref{thm:hitchinfibers} below,
we further extend this analogy and establish a global version of the commutative diagram 
\eqref{eq:relationadjquotients}, at least over the locus of smooth fibers of 
$\bc$ and $\bc_h$, see equation \eqref{eq:isointsys}.
\begin{ex}\label{runHiggs}
In our running example with $G_h=PSL_4(\C)$ and $G=PSp_4(\C)$ (see Example \ref{rungroupfold}), it is convenient to first introduce $SL_4(\C)$- and $Sp_4(\C)$-Higgs bundles. 
A principal $SL_4(\C)$-Higgs bundle is equivalent to a Higgs vector bundle $(E,\varphi)$ consisting of a holomorphic vector bundle $E$ over $\cu$ of rank $4$ with trivial determinant bundle $\Lambda^4 E$ and a trace-free Higgs field $\varphi\in H^0(\cu, K_\cu\otimes \mathrm{End}_0(E))$.
	
Analogously, a principal $Sp_4(\C)$-Higgs bundle is equivalent to a pair $((F,\langle\cdot , \cdot\rangle), \psi)$ of a holomorphic vector bundle $F$ of rank $4$ with symplectic pairing $\langle\cdot , \cdot\rangle$ and a holomorphic section $\psi\in H^0(\cu, K_\cu\otimes \mathrm{End}(F))$ which is skew-symmetric with respect to $\langle \cdot, \cdot \rangle$.
In particular, $\psi$ is trace-free. 

Then the moduli space of semistable principal $PSL_4(\C)$-Higgs bundles is described in \cite[\S 2]{HauselThaddeus}; the result generalizes to any 
classical reductive complex Lie group
of adjoint type and reads
\begin{equation}\label{eq:MHex}
\begin{aligned}
\mathcal{M}(\cu,PSL_4(\C))&\cong   \mathcal{M}(\cu,SL_4(\C))/J(\cu)[4],\\ 
\mathcal{M}(\cu,PSp_4(\C))& \cong \mathcal{M}(\cu,Sp_4(\C))/J(\cu)[4].
\end{aligned}
\end{equation}
Recall here that we are working with the neutral 
components $\mh(\cu,H)=\mh_0(\cu,H)$ of the moduli 
spaces, throughout.
In \eqref{eq:MHex} the elements of $J(\cu)[4]$ of order $4$ in the Jacobian $J(\cu)$ of $\cu$ act on the respective moduli 
spaces by taking the tensor product.

Choosing the same generators of $\C[\tfr_h]^{W_h}$ and $\C[\tfr]^W$ as in Example \ref{runLiealgebras}, we obtain the isomorphisms
\begin{equation}\label{eq:runBBh}
\Bb_h\cong H^0(\cu,K_\cu^2)\oplus H^0(\cu, K_\cu^3)\oplus H^0(\cu, K_\cu^4),\quad \Bb\cong H^0(\cu,K_\cu^2)\oplus H^0(\cu, K_\cu^4).
\end{equation}
Hence we write elements of $\Bb_h$ and $\Bb$ as $b^h=(b_2^h,b_3^h,b_4^h)$ and $b=(b_2,b_4)$ respectively.
Then the Hitchin maps are given by
\begin{equation*}
\bm{\chi}_h([(E,\varphi)])=\left(\mathrm{tr}(\Lambda^2 \varphi),\mathrm{tr} (\Lambda^3 \varphi), \mathrm{tr}(\Lambda^4 \varphi) \right),\quad \bm{\chi}_h([(F,\psi)])=\left(\mathrm{tr}(\Lambda^2 \psi),\mathrm{tr}( \Lambda^4 \psi)\right).
\end{equation*}
\end{ex}

\subsection{Generic Hitchin fibers}\label{sec:genericHitch}
To give the isomorphism classes of the generic Hitchin fibers, 
i.e.~the generic fibers of $\bc$, we introduce the total space
\begin{equation*}
	\widetilde\Ub:=\mathrm{tot}(K_\cu\times_{\C^*} \tfr)
\end{equation*}
of the bundle associated to the standard $\C^*$-action on $\tfr$.
The quotient map $q\colon \tfr\to \tfr/W$ globalizes to give
\begin{equation*}
\bq\colon \widetilde{\Ub}\to \Ub.
\end{equation*}
Then the cameral curve $\tcu_{b}$ associated to $b\in \Bb$ is defined by the fiber product
\begin{equation*}
\begin{tikzcd}
\tcu_{b} \ar[r] \ar[d, "p_b"'] & \widetilde{\Ub} \ar[d, "\bq"] \\
\cu \ar[r,"b"] & \Ub
\end{tikzcd}
\end{equation*}
If the fiber $\bc^{-1}(b)$, $b\in \Bb$, is 
smooth, then it is a torsor for the abelian variety $P_b$.
If $\tcu_b$ is smooth, then $P_b$ is explicitly determined by $\tcu_b$, 
see \cite[\S 4]{DG} for the most general treatment.
For our purposes we only need to consider the case where $H$ is a simple complex 
Lie group of \emph{adjoint type}, which we assume from now on.

Let $\bLambda$ be the cocharacter lattice of $H$ and $b\in \Bb$ such that $\tcu_b$ is smooth.
Then the cocharacter lattice of $P_b$ is given by
\begin{equation}\label{eq:cocharP}
\mathrm{cochar}(P_b)=H^1(\cu,(p_{b}{}_\ast\bLambda)^W),
\end{equation}
see \cite[\S 3]{DP} and \cite[Corollary 3.3.2 and Proposition 3.4.1]{Beck}. 
The complex structure is determined by the universal coverings
\begin{equation}\label{eq:universalcoveringP}
H^1(\tcu_b,\tfr)^W 
\end{equation}
of the abelian varieties $P_b$.
In summary, \eqref{eq:cocharP} completely 
determines the isomorphism class of $P_b$ and hence of $\bc^{-1}(b)$.

\subsection{Folding of cameral curves}\label{ss:foldingcameral}
As a first step towards folding Hitchin systems, in this section we show
how to fold the cameral curves that we introduced in Section \ref{sec:genericHitch}.
In the following, $G=(G_h^\Cc)^\circ$ arises from folding a simple complex Lie group $G_h$ of adjoint type.
We further use the notation set up in \eqref{BisBhC}.

\begin{prop}\label{p:foldingcameral}
Let $b\in \Bb$ be transversal to the discriminant locus $\mathrm{discr}(\bm{q})$
of $\bm{q}$.
Then $\widetilde{\cu}_b$ and $\widetilde{\cu}_{h,\iota(b)}$ are
\emph{smooth} $W$- and $W_h$-cameral curves respectively. 
They are related via
\begin{equation}\label{eq:cams}
\widetilde{\cu}_{h,\iota(b)}\cong(\widetilde{\cu}_b\times W_h)/W. 
\end{equation}
Here $W\subset W_h$ acts diagonally on $\widetilde{\cu}_b\times W_h$. 
Moreover, let $\alpha\in R_h$, $\beta= \alpha^O$, so 
$\beta\in R_h^\Cc=R^\vee$, cf.~equation \eqref{eq:alphaO}. Then the local 
monodromies around the branch points of 
$p_{\iota(b)}^h\colon\widetilde{\cu}_{h,\iota(b)}\longrightarrow \cu$ are given by the restrictions of 
$s_\beta=\prod\limits_{\gamma \in O(\alpha)} s_\gamma$, cf.~equation \eqref{eq:weylfolding}.
\end{prop}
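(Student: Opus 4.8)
The plan is to work fiberwise over $\cu$ and exploit the description of cameral curves as fiber products, combined with the group-theoretic results of Section \ref{sec:foldingCartan}, in particular Proposition \ref{p:foldingweyl} (identifying $W$ with $W_h^\Cc$) and Corollary \ref{cor:tfr} (identifying $\tfr/W$ with $(\tfr_h/W_h)^\Cc$). First I would set up the situation: since $b\in\Bb$ is transversal to $\mathrm{discr}(\bm q)$, the pullback $\widetilde\cu_b = \cu\times_{\Ub}\widetilde\Ub$ is smooth, and similarly $\iota(b)\in\Bb_h$ is transversal to $\mathrm{discr}(\bm q_h)$ because $\iota$ is induced by the inclusion $\tfr\hookrightarrow\tfr_h$ and, by Corollary \ref{cor:tfr}(i), the reflection hyperplanes $\tfr^{\alpha^O}$ are exactly the intersections of the $\tfr_h^{\alpha'}$ with $\tfr$; so $\widetilde\cu_{h,\iota(b)}$ is smooth as well. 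Both are branched Galois covers of $\cu$ with Galois groups $W$ and $W_h$ respectively, and the key point is to identify the $W_h$-cover $\widetilde\cu_{h,\iota(b)}$ in terms of the $W$-cover $\widetilde\cu_b$.

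For \eqref{eq:cams}, I would argue as follows. The section $b\colon\cu\to\Ub$ factors through $\Ub_G = K_\cu\times_{\C^*}(\tfr/W) \hookrightarrow K_\cu\times_{\C^*}(\tfr_h/W_h) = \Ub_{G_h}$ by the inclusion coming from Corollary \ref{cor:tfr}(ii); this is precisely the statement that $\iota(b)$ is the composite. Now both $\widetilde\cu_b$ and $\widetilde\cu_{h,\iota(b)}$ are obtained by pulling back along the \emph{same} map $b$, but via the two quotient maps $\tfr\to\tfr/W$ and $\tfr_h\to\tfr_h/W_h$. The relation \eqref{eq:cams} is then a consequence of the following purely scheme-theoretic fact: if $T\to T/W \hookrightarrow T_h/W_h$ with $T = \tfr$, $T_h = \tfr_h$, $T = T_h^\Cc$ and $W = W_h^\Cc\subset W_h$, then the fiber product $(T_h/W_h)\times_{(T/W)}T$ — i.e. the base change of the $W_h$-cover $T_h\to T_h/W_h$ along $T/W\to T_h/W_h$ — is $W_h$-equivariantly isomorphic to $(T\times W_h)/W$, where $W$ acts diagonally (via $W\subset W_h$ on the second factor) and $W_h$ acts on the second factor by left translation. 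This is the standard "induced cover" description: the $W_h$-set fiber of $T_h\to T_h/W_h$ over a point of $T/W$, which maps to $T_h/W_h$ via the given inclusion, is $W_h/\!\!\sim$ where the identification is governed by how the $W$-orbit in $T$ sits inside the $W_h$-orbit in $T_h$. Concretely, over a regular point $t\in\tfr^\circ\subset\tfr_h^\circ$ (Lemma \ref{regact}), the $W_h$-orbit $W_h\cdot t$ contains $t$ with stabilizer trivial in $W_h$, and Lemma \ref{regact} says $W_h\cdot t\cap\tfr = W\cdot t$; thus the fiber of $\widetilde\cu_{h,\iota(b)}\to\cu$ over a point where the fiber of $\widetilde\cu_b\to\cu$ is the $W$-torsor $W$ is the $W_h$-torsor $W_h$, fibered over $W$ compatibly, which is exactly $(W\times W_h)/W = W_h$ as a $W_h$-set — and globally this patches to \eqref{eq:cams}. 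Since both sides are normal (indeed smooth) surfaces finite over $\cu$ agreeing over the open regular locus and carrying compatible $W_h$-actions, they agree; alternatively one checks the universal property of the fiber product directly on $(\widetilde\cu_b\times W_h)/W$.

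For the local monodromy statement I would localize near a branch point. By transversality, near any branch point of $p^h_{\iota(b)}$ the section $\iota(b)$ meets the discriminant $\mathrm{discr}(\bm q_h)$ transversally along a generic point of one of its irreducible components; such a component is the image of a single reflection hyperplane $\tfr_h^{\alpha'}$, $\alpha'\in R_h$, and the local monodromy of $T_h\to T_h/W_h$ there is the corresponding simple reflection $s_{\alpha'}$. But here the section $\iota(b)$ lands in the smaller space $K_\cu\times_{\C^*}(\tfr/W)$, so the relevant component of the discriminant is the image of a reflection hyperplane of $R^\vee = R_h^\Cc$, namely $\tfr^{\beta}$ with $\beta = \alpha^O$. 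By Corollary \ref{cor:tfr}(i), $\tfr^\beta = \bigcap_{\gamma\in O(\alpha)}\tfr_h^\gamma\cap\tfr$: the $\Cc$-orbit $O(\alpha)$ of hyperplanes all meet $\tfr$ in the \emph{same} hyperplane $\tfr^\beta$. Hence a small loop in $\cu$ around this branch point, pushed into $\tfr_h/W_h$ via $\iota(b)$, simultaneously encircles all $|O(\alpha)|$ hyperplanes $\tfr_h^\gamma$, $\gamma\in O(\alpha)$ (these are pairwise orthogonal by the definition of Dynkin graph automorphisms, footnote \ref{graphauto}, so the reflections commute); therefore the monodromy in $W_h$ is the product $\widetilde s_\beta = \prod_{\gamma\in O(\alpha)}s_\gamma$, which by \eqref{eq:weylfolding} restricts on $\tfr = V_h^{*\Cc}\otimes\C$ to $s_\beta$. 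That this product is the \emph{correct} element (and not, say, a single $s_\gamma$) follows because $\iota(b)$ being transversal to $\mathrm{discr}(\bm q)$ but contained in the $\Cc$-fixed locus forces the local picture to be the $|O(\alpha)|$-fold coincidence of branch divisors, i.e. the unfolding parameter transverse to $\tfr$ has been frozen to zero.

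\emph{Main obstacle.} The routine part is the scheme-theoretic identity $(T_h/W_h)\times_{T/W}T \cong (T\times W_h)/W$; the genuinely delicate point is the monodromy computation — specifically, verifying that the \emph{local} analytic structure of $\iota(b)$ near a branch point is the transverse slice to the full $\Cc$-orbit of hyperplanes $\{\tfr_h^\gamma : \gamma\in O(\alpha)\}$ meeting along $\tfr^\beta$, so that encircling it produces $\prod_\gamma s_\gamma$ rather than a partial product or a conjugate. This requires care because a priori $\iota(b)$ could be tangent to higher-order strata of $\mathrm{discr}(\bm q_h)$; the transversality hypothesis on $b$ with respect to $\mathrm{discr}(\bm q)$ — \emph{not} $\mathrm{discr}(\bm q_h)$ — is exactly what rules this out, via Corollary \ref{cor:tfr}(i), and making that implication precise is where I would spend most of the effort.
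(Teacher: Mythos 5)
Your overall strategy --- identify $q_h^{-1}(\tfr/W)$ over the regular locus with the induced cover $(\tfr^\circ\times W_h)/W$ via Lemma \ref{regact}, extend over the ramification points, and read off the monodromy from \eqref{eq:weylfolding} --- is essentially the paper's. But there is one genuine error, and it sits exactly at the delicate point: your claim that $\iota(b)$ is transversal to $\mathrm{discr}(\bq_h)$. This is false whenever $\Cc\neq\{\id\}$, and the paper makes a point of it in the Remark immediately following the Proposition: at a point $x\in\cu$ where $b(x)$ lies on the wall indexed by $\alpha^O$ with $|O(\alpha)|>1$, equation \eqref{eq:tfrO} shows that all $|O(\alpha)|$ hyperplanes $\tfr_h^{\gamma}$, $\gamma\in O(\alpha)$, pass through the relevant point of $\widetilde{\Ub}_h$, so $\dim\ker(d\bq_h)=|O(\alpha)|>1$ there and transversality is impossible. (Your own monodromy paragraph implicitly concedes this when you say the loop ``simultaneously encircles all $|O(\alpha)|$ hyperplanes'' --- a section transversal to $\mathrm{discr}(\bq_h)$ would only ever cross one at a time.) Since this false transversality is your only argument for the smoothness of $\widetilde{\cu}_{h,\iota(b)}$, and your extension of \eqref{eq:cams} from the regular locus to all of $\cu$ explicitly invokes that both sides are ``normal (indeed smooth)'', the argument as written has a hole: a priori the fiber product defining $\widetilde{\cu}_{h,\iota(b)}$ could be singular, or even non-reduced, over the branch points. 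The repair is the paper's observation that $\widetilde{\cu}_{h,\iota(b)}$ is the pullback of the \emph{restricted} covering $q_{h|W_h(\tfr)}\colon W_h(\tfr)=q_h^{-1}(\tfr/W)\to\tfr/W$, and that transversality of $b$ to $\mathrm{discr}(q)$ does imply transversality to $\mathrm{discr}(q_{h|W_h(\tfr)})$; smoothness of both cameral curves then follows from the general theory applied to this restriction.

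A second, smaller point: the ``purely scheme-theoretic fact'' $(T\times W_h)/W\cong q_h^{-1}(T/W)$ is not true over non-regular elements --- the map $[(t,w)]\mapsto w\cdot t$ fails to be injective once $t$ has nontrivial stabilizer in $W_h$, which is why the paper states \eqref{eq:tth} only over $\tfr^\circ$ and why the example after the proof warns that the decomposition \eqref{eq:decompWh} ceases to be disjoint off the regular locus. You do recover from this by restricting to the regular locus before patching, so this is a matter of phrasing rather than substance; but it means the only route to \eqref{eq:cams} over the ramification points is via normality/smoothness of both sides, which brings you back to the gap above. The monodromy computation itself is fine, and is in fact more detailed than the paper's, which simply cites \eqref{eq:weylfolding}.
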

\begin{rem}
	If $b\in \Bb$ is transversal to $\mathrm{discr}(\bq)$, then $\iota(b)\in \Bb_h$ cannot be transversal to $\mathrm{discr}(\bq_h)$ for $\Cc\neq\{\id\}$. 
	Indeed, by the compactness of $\cu$, it is easy to see that such 
	$b$ has to (transversally) intersect each hyperplane 
	$\Ub^{\alpha^O}=K_\cu\times_{\C^*}\tfr^{\alpha^O}/W\subset \Ub_h$, 
	$\alpha^O\in R^\vee$, with $\tfr^{\alpha^O}$ as in \eqref{eq:weylfolding}.
	Let $\alpha\in R$ with $|O(\alpha)|>1$ and let $x\in \cu$ be a point such that $b(x)\in \Ub^{\alpha^O}$.
	For any $\tilde{u}\in \bq^{-1}(b(x))$, by \eqref{eq:tfrO} we have $\dim\mbox{ker} (d\bq_{h,\tilde{u}}) = |O(\alpha)|>1$ by assumption, so
	$$
	\mathrm{rk}(d\bq_{h,\tilde{u}})+1 < \dim(\widetilde{\Ub}_h) = \dim(\Ub_h).
	$$
	Hence $\iota(b)$ cannot intersect $\mathrm{discr}(\bq_h)$ transversally at $x$.
	In particular, we cannot conclude from the general theory that $\tcu_{h,\iota(b)}$ is smooth.
\end{rem}

\begin{proof}[Proof of Proposition \ref{p:foldingcameral}] 
We first consider the local situation. 
Let $D\subset \Sigma$ be a sufficiently small open subset and 
$b\colon D\longrightarrow \tfr/W\subset \tfr_h/W_h$ be a holomorphic map which is transversal to $\mathrm{discr}(q)$.
Further note that $q_h^{-1}(\tfr/W)=W_h(\tfr)$ fits into the following commutative diagram
\begin{equation*}
\begin{tikzcd}
\tfr \ar[r, hookrightarrow] \ar[dr, "q"'] & W_h(\tfr) \ar[d, "q_h"] \\
& \tfr/W.
\end{tikzcd}
\end{equation*}
Since $b$ is transversal to $\mathrm{discr}(q)$, the commutativity of the above diagram implies that 
$b$ is transversal to $\mathrm{discr}(q_{h|W_h(\tfr)})$ as well. 
Hence the cameral curves are locally given by
the smooth coverings $\widetilde{D}_b=D\times_{\tfr/W} \tfr $ 
and $\widetilde{D}_{h,\iota(b)}=D\times_{\tfr/W} W_h(\tfr)$ 
of $D$ and are therefore smooth. This proves
the first statement of the proposition. 

To prove \eqref{eq:cams} we next observe
\begin{equation}\label{eq:tth}
W_h(\tfr^\circ)\cong (\tfr^\circ\times W_h)/W,
\end{equation}
where $\tfr^{\circ}$ denotes the set of semisimple regular elements in 
$\tfr$ and $W$ acts by $w\cdot (t,w_h)=(w\cdot t, w_h w^{-1})$.
Indeed, by Lemma \ref{regact} the map 
\begin{equation}
(\tfr^\circ\times W_h)/W\to W_h(\tfr^\circ),\quad [(t,w)]\mapsto w\cdot t
\end{equation}
is a well-defined isomorphism of $W_h$-coverings of $\tfr^\circ/W$. 

Next observe that the branch locus of $p_{\iota(b)}^h\colon\tcu_{h,\iota(b)}\longrightarrow \cu$ 
in $\cu$ coincides with that of 
$p_b\colon\tcu_b\longrightarrow \cu$. 
We denote by $\cu^\circ$ its complement and by $\tcu_{h,\iota(b)}^\circ$ and $\tcu_{b}^\circ$ 
the complements of the ramifications divisors of $p_{\iota(b)}^h$ and $p_b$ respectively. 
The latter are the pullbacks of $K_\cu \times_{\C^\ast} W_h(\tfr^\circ)$ 
and $K_\cu \times_{\C^\ast} \tfr^\circ$ along $\iota(b)$ and $b$ respectively. 
Hence (\ref{eq:tth}) implies
\begin{equation*}
\tcu^\circ_{h,\iota(b)}\cong (\tcu^\circ_b\times W_h)/W
\end{equation*}
as $W_h$-coverings over $\cu^\circ$. 
Here $W$ acts diagonally as before. 
Since the local monodromies around the points of $\cu-\cu^\circ$ coincide and $\tcu_{h,\iota(b)}$ as well as $\tcu_b$ are smooth, the previous isomorphism extends to the isomorphism (\ref{eq:cams}) of simply branched $W_h$-Galois coverings of $\cu$.  

The  claim about the local monodromies now follows 
from the prescription for folding $W_h$ to $W$, 
see Proposition \ref{p:foldingweyl}, in particular formula \eqref{eq:weylfolding}. 
\end{proof}

In the setup of Proposition \ref{p:foldingcameral}, $\widetilde{\cu}_{h,\iota(b)}$ decomposes 
non-canonically into $[W_h:W]$ copies of $\widetilde{\cu}_b$: 
fix a representative $w_j\in W_h$ for each equivalence class in $W_h/W$, where we choose the canonical representative $1\in W$ for the class $W$.
Then we obtain the inclusions
\begin{equation}\label{eq:i_wh}
i_{w_{j}}\colon\widetilde{\cu}_b\hookrightarrow \widetilde{\cu}_{h,\iota(b)},\quad 
x\mapsto [(x,w_{j})]
\end{equation}
under the isomorphism \eqref{eq:cams}.
In particular, with $i:=i_{1}$, we have $i_{w_{j}}=w_j\circ i$. 
By the universal property of the coproduct, these inclusions induce the morphism
\begin{equation*}
\coprod_{[w_{j}]\in W_h/W} \widetilde{\cu}_b\longrightarrow \widetilde{\cu}_{h,\iota(b)}
\end{equation*}
which is a $W_h$-equivariant isomorphism. 

\begin{ex}
The heart of the previous proof is \eqref{eq:tth} which is equivalent to the non-canonical decomposition 
	\begin{equation}\label{eq:decompWh}
	W_h(\tfr^\circ)\cong \coprod_{[w_j]\in W_h/W} \tfr^\circ.
	\end{equation}
In our running example, we have $W_h=S_4$ and $W=\Z/4\Z\rtimes \Z/2\Z$ so that $[W_h:W]=3$. 
In fact, with notations as in Example \ref{runLiealgebras}, $w\in S_4$ acts on the
elements of $\tfr_h$ by 
$$
\mbox{diag}\left( x_1,\, x_2,\, x_3,\, x_4\right)
\longmapsto \mbox{diag}\left( x_{w(1)},\, x_{w(2)},\, x_{w(3)},\, x_{w(4)}\right),
$$
and $W\subset W_h$ is generated by the reflections in $(\beta_1^\vee)^\perp,\, 
(\beta_2^\vee)^\perp$ which act as $w=(1,\,4)(2,\,3)$ and  $w=(2,\,4)$.
By Example \ref{runLiealgebras} we further know
\begin{equation*}
\tfr^\circ=\{ \mathrm{diag}(u,\,v,\,-u,\,-v)~|~u,\, v\in\C^\ast,\, u\neq \pm v \}.
\end{equation*}
It follows that $W_h(\tfr^\circ)\subset \tfr_h$ decomposes into three connected
components, according to \eqref{eq:decompWh},
\begin{gather*}
\{ \mathrm{diag}(u,\,v,\,-u,\,-v)~|~u,\, v\in\C^\ast,\, u\neq \pm v \} 
\coprod \{ \mathrm{diag}(u,\,v,\,-v,\,-u)~|~u,\, v\in\C^\ast,\, u\neq \pm v \}\\
\coprod \{ \mathrm{diag}(u,\,-u,\,v,\,-v)~|~u,\, v\in\C^\ast,\, u\neq \pm v \}.
\end{gather*}
This example also shows that the  decomposition 
\eqref{eq:decompWh} ceases to be disjoint when $\tfr^\circ$ is replaced by a subset of $\tfr$ which contains non-regular elements.
\end{ex}

\subsection{Folding of Hitchin systems}\label{ss:foldinghit}
For any homomorphism $\sigma\colon H\longrightarrow H^\prime$ of reductive complex 
algebraic groups and any $H$-Higgs bundle $(F,\theta)$, we denote by $(\sigma(F),\sigma_*(\theta))$ 
the associated $H^\prime$-Higgs bundle obtained by the extension of the structure group.
In particular, the automorphism group $\mathrm{Aut}(G_h)\cong\mathrm{Aut}(\gfr_h)$ 
acts naturally on $\mh(\cu,G_h)$ via
\begin{equation}
[(E,\varphi)]\mapsto [(\sigma(E),\sigma_*(\varphi))].
\end{equation}
Inner automorphisms act trivially so that the $\mathrm{Aut}(G_h)$-action descends to an 
$\mathrm{Aut}(\Delta_h)$-action, in particular to an 
action by $\Cc=\langle a \rangle \subset \mathrm{Aut}(\Delta_h)$.
\begin{prop}\label{inclusion}
	The inclusion $G\hookrightarrow G_h$ of simple adjoint complex Lie groups induces an
	\emph{injective} holomorphic map
	\begin{equation*}
	\iota_{\mh}\colon\mh(\cu,G)\longrightarrow \mh(\cu,G_h)^\Cc
	\end{equation*}
	over $\iota(\Bb)\subset \Bb_h$. 
\end{prop}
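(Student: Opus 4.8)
The plan is to show that the extension-of-structure-group functor $\mh(\cu,G)\to\mh(\cu,G_h)$ lands in the $\Cc$-fixed locus and is injective. First I would check $\Cc$-invariance: if $(F,\theta)$ is a $G$-Higgs bundle and $\sigma$ denotes the inclusion $G\hookrightarrow G_h$, then for $a\in\Cc$ the automorphism $a$ of $G_h$ satisfies $a\circ\sigma=\sigma$ on $G=(G_h^\Cc)^\circ$ by Proposition \ref{foldedgroup}, so $(a(\sigma(F)),a_*(\sigma_*(\theta)))\cong(\sigma(F),\sigma_*(\theta))$ as $G_h$-Higgs bundles; hence the image lies in $\mh(\cu,G_h)^\Cc$. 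That the map is a morphism over $\iota(\Bb)$ is the statement that the Hitchin map intertwines with extension of structure group, which is exactly the commutative square \eqref{eq:relationadjquotients} globalized: $\chi_h\circ(\sigma_*)=\iota\circ\chi$ on Lie-algebra level, so $\bc_h\circ\iota_\mh=\iota\circ\bc$.

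Second, holomorphicity: the extension-of-structure-group operation $(F,\theta)\mapsto(\sigma(F),\sigma_*\theta)$ is induced by the algebraic morphism of stacks $\mathrm{Bun}_G\to\mathrm{Bun}_{G_h}$ (push-forward of torsors along $G\to G_h$) together with the induced map on Higgs fields via the $G$-equivariant inclusion $\gfr\hookrightarrow\gfr_h$; on the level of coarse moduli spaces this descends to a morphism of quasi-projective varieties because semistability of $G$-Higgs bundles implies semistability of the extended $G_h$-Higgs bundle (here one uses that $G\hookrightarrow G_h$ sends parabolics to parabolics compatibly; alternatively, over the Zariski-dense stable locus one may argue directly and extend by continuity/normality). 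I would state this as a routine consequence of functoriality of the moduli construction, citing \cite{Hit1,Nitsure,SimpsonModuliI,SimpsonModuliII} as already done for existence of $\mh(\cu,H)$.

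Third, and this is the crux, injectivity. The main obstacle is that extension of structure group is generally \emph{not} injective on moduli spaces — two non-isomorphic $G$-bundles can become isomorphic after pushing forward to $G_h$. The plan is to prove injectivity by working fiberwise over $\Bb$ and exploiting the cameral description: for $b\in\Bb$ with $\tcu_b$ smooth, the fiber $\bc^{-1}(b)$ is a torsor under the abelian variety $P_b$ with cocharacter lattice $H^1(\cu,(p_{b*}\bLambda)^W)$ by \eqref{eq:cocharP}, and similarly $\bc_h^{-1}(\iota(b))$; I would show that $\iota_\mh$ restricted to these fibers is induced by the lattice map coming from $\bLambda=\bLambda_h^\Cc\hookrightarrow\bLambda_h$ together with the covering relation \eqref{eq:cams} $\tcu_{h,\iota(b)}\cong(\tcu_b\times W_h)/W$, and that this induced map on $P_b\to P_{h,\iota(b)}$ (hence on torsors, once a Hitchin section is fixed to trivialize) is injective. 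Concretely, under \eqref{eq:cams} one has $(p^h_{\iota(b)*}\bLambda_h)^{W_h}\cong(p_{b*}(\bLambda_h))^{W}$, and the inclusion $\bLambda_h^\Cc\hookrightarrow\bLambda_h$ identifies $(p_{b*}\bLambda)^W$ with a subsheaf whose induced map on $H^1$ is injective — because $\bLambda_h^\Cc$ is a direct summand of $\bLambda_h$ after tensoring with $\Q$ (the projector $p$ of \eqref{eq:projectiongfr} has $\Q$-coefficients), so the map on $H^1(\cu,-)^W$ is rationally split, hence injective on the abelian varieties (no torsion is lost because $P_b$ is connected). Finally, since $\iota_\mh$ is compatible with the Hitchin projections and injective on each smooth fiber, and the union of smooth fibers is Zariski-dense while $\mh(\cu,G)$ is separated, injectivity on all of $\mh(\cu,G)$ follows from injectivity on a dense subset together with properness/separatedness of the fibers — or one simply restricts the injectivity claim to $\Bbo$, which is all that is used in Theorem \ref{thm:intro1}. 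I expect the fiberwise lattice argument to be the delicate point, since one must be careful that the $W$- versus $W_h$-invariants interact correctly with pushforward along the two cameral covers; the covering identity \eqref{eq:cams} from Proposition \ref{p:foldingcameral} is precisely the tool that makes this bookkeeping work.
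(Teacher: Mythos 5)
Your treatment of $\Cc$-invariance, compatibility with the Hitchin maps, and well-definedness is broadly in line with the paper (the paper establishes well-definedness via polystability and Simpson's characterization through solutions of Hitchin's equations \eqref{eq:NAH}, which is cleaner than arguing with parabolic reductions and also handles S-equivalence correctly, since the moduli space parametrizes polystable objects). The genuine problem is your injectivity argument, which is where the real content of Proposition \ref{inclusion} lies. The paper's proof is global and Lie-theoretic: by Lemma \ref{lem:normalizer}, if two $G$-Higgs bundles become isomorphic after extension to $G_h$, they are already isomorphic as $N_{G_h}(G)$-Higgs bundles; and by \cite[Proposition 2.20]{GPR} together with $Z(G_h)=\{\id\}$ (as $G_h$ is adjoint) one has $N_{G_h}(G)=G$, so they are isomorphic as $G$-Higgs bundles. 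This works on all of $\mh(\cu,G)$, over all of $\iota(\Bb)$, with no genericity assumption.

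Your fiberwise route has two gaps. First, the extension step ``injective on a dense subset plus separatedness implies injective everywhere'' is false: a morphism can be injective on a Zariski-dense open set and identify points in the complement (normalizations and small contractions do exactly this), and separatedness of the target is irrelevant here. Your fallback of restricting the claim to $\Bbo$ proves a weaker statement than the Proposition. Second, even over $\Bbo$ the lattice argument is not quite right as stated: rational splitness of $\bLambda=\bLambda_h^\Cc\subset\bLambda_h$ only controls the kernel up to isogeny, so by itself it shows $P_b\to P_{h,\iota(b)}$ has finite kernel, not trivial kernel. What one actually needs is that $H^1(\cu,(p_{b*}\bLambda)^W)\to H^1(\cu,(p_{b*}\bLambda_h)^W)^\Cc$ is an isomorphism (a saturation statement on cohomology, not just on the lattices), which is precisely \cite[Proposition 5.1.2]{Beck} --- and in the paper this is invoked in the proof of Theorem \ref{thm:hitchinfibers} to upgrade the already-established injection to an isomorphism onto the $\Cc$-fixed locus, not to prove injectivity in the first place. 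In effect you are folding a piece of Theorem \ref{thm:hitchinfibers} into Proposition \ref{inclusion} and still coming up short of the stated generality; the normalizer argument is the missing idea.
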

As a preparation for our proof, we need the following
lemma which seems to be known to the experts (see 
e.g.\cite[\S 5]{GPR}), but we include a proof for completeness.
\begin{lem}\label{lem:normalizer}
	Let $i_H\colon H\hookrightarrow G_h$ be the inclusion of a reductive subgroup 
	and $i_N\colon H \hookrightarrow N_{G_h}(H)$ the inclusion into its normalizer $N=N_{G_h}(H)$.
	Further let $(E_j,\varphi_j)$, $j \in\{1,2\}$, be two $H$-Higgs bundles such that 
	$(i_H(E_j),i_{H,*}(\varphi_j))$ are isomorphic $G_h$-Higgs bundles. 
	Then $(i_N(E_j),i_{N,*}(\varphi_j))$ are isomorphic $N$-Higgs bundles. 
\end{lem}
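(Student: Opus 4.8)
The statement is a general fact about extension of structure group along an inclusion into a normalizer, so I would work directly with the interpretation of principal bundles via cocycles (or, equivalently, via the groupoid of $G$-torsors). The key point is that an isomorphism of the extended $G_h$-Higgs bundles must, after a suitable reduction, already be an isomorphism of $N$-Higgs bundles because $N = N_{G_h}(H)$ is precisely the subgroup of $G_h$ that ``sees'' the $H$-structure.

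\medskip

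\noindent\emph{Step 1: Reformulate via reductions of structure group.} Fix a good open cover $\{U_\alpha\}$ of $\cu$ trivializing all bundles in sight. An $H$-Higgs bundle $(E_j,\varphi_j)$ is given by transition functions $g^j_{\alpha\beta}\colon U_{\alpha\beta}\to H$ together with local Higgs fields $\varphi^j_\alpha\in H^0(U_\alpha,K_\cu\otimes\hfr)$ satisfying the usual gluing $\varphi^j_\beta = \mathrm{Ad}_{(g^j_{\alpha\beta})^{-1}}\varphi^j_\alpha$. An isomorphism $i_H(E_1)\cong i_H(E_2)$ of the extended $G_h$-Higgs bundles is a collection $h_\alpha\colon U_\alpha\to G_h$ with $g^2_{\alpha\beta}= h_\beta^{-1} g^1_{\alpha\beta} h_\alpha$ (viewing $g^j_{\alpha\beta}\in G_h$) and $\varphi^2_\alpha = \mathrm{Ad}_{h_\alpha^{-1}}\varphi^1_\alpha$.

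\medskip

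\noindent\emph{Step 2: Show the gluing data forces $h_\alpha$ into $N$.} From $g^2_{\alpha\beta} = h_\beta^{-1}g^1_{\alpha\beta}h_\alpha$ with both $g^1_{\alpha\beta},g^2_{\alpha\beta}\in H$, I would examine how $\mathrm{Ad}_{h_\alpha}$ interacts with the subalgebra $\hfr\subset\gfr_h$. The relation says $\mathrm{Ad}_{h_\beta}g^2_{\alpha\beta}\mathrm{Ad}_{h_\alpha}^{-1}\cdots$ — more precisely, conjugating, the $h_\alpha$ intertwine the two $H$-reductions. The clean way: the two $H$-reductions of the common $G_h$-bundle $P=i_H(E_1)\cong i_H(E_2)$ correspond to two sections of the associated bundle $P/H \to \cu$ with fiber $G_h/H$. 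Since $N/H$ is the subgroup of $G_h/H$ fixed pointwise under left-translation by $H$ in the appropriate sense — more usefully, $N$ acts on $G_h/H$ with $N/H$ the stabilizer structure — the element of $G_h$ relating the two reductions at each point is well-defined only up to $H$ on each side, and the obstruction to it lying in $N$ is exactly measured by whether the two $H$-structures are ``conjugate'' in a way compatible with the normalizer. Concretely: pick local sections realizing the $H$-reductions; then $h_\alpha$ is determined modulo $H$, and the cocycle condition $g^2_{\alpha\beta}=h_\beta^{-1}g^1_{\alpha\beta}h_\alpha$ together with $g^j_{\alpha\beta}\in H$ forces $h_\beta^{-1}g^1_{\alpha\beta}h_\alpha\in H$; writing $h_\alpha = n_\alpha$ with $n_\alpha\in N$ is then achieved by observing $h_\alpha^{-1}H h_\alpha$ must be compatible across overlaps, and connectedness/consistency of the reductions lets one choose the $h_\alpha$ in $N$. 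I expect this to reduce to: \emph{if $h\in G_h$ and $h^{-1}Hh \cap (\text{coset data})$ is consistent, then $h\in N\cdot H = N$.}

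\medskip

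\noindent\emph{Step 3: Transport the Higgs field and conclude.} Once the $h_\alpha$ are chosen in $N$, the same collection $\{h_\alpha\}$ gives an isomorphism $i_N(E_1)\to i_N(E_2)$ of the extended $N$-bundles, and the Higgs-field compatibility $\varphi^2_\alpha=\mathrm{Ad}_{h_\alpha^{-1}}\varphi^1_\alpha$ is automatically inherited since $\varphi_\alpha^j$ already take values in $\hfr = \mathrm{Lie}(N)$ and $\mathrm{Ad}$ is the same whether computed in $N$ or in $G_h$. This closes the argument.

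\medskip

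\noindent\emph{Main obstacle.} The delicate point is Step 2: passing from ``$h_\alpha$ exists in $G_h$ locally, modulo $H$'' to ``$h_\alpha$ can be chosen globally-consistently in $N$''. One must rule out monodromy: a priori the locally-defined cosets $h_\alpha H$ might not glue to give a section of a bundle with fiber $N/H$. The resolution is that the $h_\alpha$ conjugate the fixed subgroup $i_H(H)\subset G_h$ (which is literally the \emph{same} subgroup on both sides, not just an abstractly isomorphic one) to itself, hence $h_\alpha^{-1}(i_H H)h_\alpha = i_H H$, i.e.\ $h_\alpha\in N$ on the nose — there is no coset ambiguity to worry about once we remember that $i_H$ is a \emph{fixed} inclusion, so both $E_1$ and $E_2$ are reductions to the \emph{same} copy of $H$ inside $G_h$. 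I would make this precise and it should dispatch the lemma cleanly.
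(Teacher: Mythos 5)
There is a genuine gap at your Step 2, and it is precisely the point where the paper instead invokes an external result. The paper's proof is two lines: it cites \cite[Lemma 5.11]{GPPNR} for the statement that the $G_h$-bundle isomorphism yields an isomorphism of the underlying $N$-bundles, and then adds the Higgs fields back by the observation that they are valued in $\mathrm{Lie}(H)\subset\mathrm{Lie}(N)$ (your Step 3, which is fine and identical to the paper's). Your attempt to prove the bundle statement from scratch rests on the claim that the gluing data $h_\alpha\colon U_\alpha\to G_h$ satisfying $g^2_{\alpha\beta}=h_\beta^{-1}g^1_{\alpha\beta}h_\alpha$ must conjugate the fixed subgroup $H\subset G_h$ to itself, ``hence $h_\alpha\in N$ on the nose.'' This does not follow: the cocycle relation only constrains $h_\beta^{-1}g^1_{\alpha\beta}h_\alpha$ for the specific elements $g^j_{\alpha\beta}$, and says nothing about $h_\alpha^{-1}Hh_\alpha$. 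A concrete counterexample to ``$h_\alpha\in N$'': take $H=T$ a maximal torus of $G_h$, $E_1=E_2$ the trivial $T$-bundle with zero Higgs field; any constant $g\in G_h\setminus N_{G_h}(T)$ defines an automorphism $f$ of the trivial $G_h$-Higgs bundle with $h_\alpha\equiv g$, and $g^{-1}Tg\neq T$. So an isomorphism of the extended $G_h$-Higgs bundles need \emph{not} restrict to the $N$-subbundles, and in particular need not map the subbundle $E_1\subset i_H(E_1)$ to $E_2\subset i_H(E_2)$ --- if it did, the $H$-Higgs bundles themselves would be isomorphic and the lemma would be vacuous.

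The correct statement is existential: \emph{some} isomorphism of the $N$-bundles exists, and establishing this requires comparing the two $H$-reductions of the common $G_h$-bundle via a section of an associated bundle with fiber $G_h/H$ and showing the relevant data can be pushed into $N/H$; this is the actual content of \cite[Lemma 5.11]{GPPNR} and uses reductivity of $H$. Your sketch gestures at this picture but does not supply the argument (your own phrasing ``I expect this to reduce to\ldots'' marks the missing step), and the final ``resolution'' offered in your last paragraph is the incorrect normalization claim above. To repair the proof you must either reproduce the argument of the cited lemma or cite it.
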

\begin{proof}
Let $f\colon (i_H(E_1),i_{H,*}(\varphi_1))\to (i_H(E_2),i_{H,*}(\varphi_2))$ be an isomorphism of $G_h$-Higgs bundles.
It is proven in \cite[Lemma 5.11]{GPPNR} that $f$ restricts to an isomorphism $f_N\colon i_N(E_1)\to i_N(E_2)$ of $N$-bundles.
Since $i_{H,*}(\varphi_j)$ is (locally) valued in $\mathrm{Lie}(H)\subset \mathrm{Lie}(N)\subset \gfr_h$, $f_N\colon (i_N(E_1),i_{N,*}(\varphi_1))\to (i_N(E_2),i_{N,*}(\varphi_2))$ is an isomorphism of $N$-Higgs bundles. 
\end{proof}

\begin{proof}[Proof of Proposition \ref{inclusion}] 
Let $\jmath\colon G\longrightarrow G_h$ be the group monomorphism obtained by folding by $\Cc=\langle a \rangle$. 
We claim that $(\jmath(E),\jmath_*(\varphi))$ is polystable as a $G_h$-Higgs bundle if $(E,\varphi)$ is polystable as a $G$-Higgs bundle. 
To this end, recall that an $H$-Higgs bundle $(F,\theta)$ is polystable iff the 
$GL(\mathfrak{h})$-Higgs bundle $(\mathrm{Ad}(F),\mathrm{Ad}_*(\theta))$ 
associated to $(F,\theta)$ via the adjoint representation 
$\mathrm{Ad}\colon H\longrightarrow GL(\mathfrak{h})$ is polystable \cite[Lemma 4.7]{AnchoucheBiswas}. 
Hence we may assume $G \subset G_h \subset GL(n)$.
The Higgs vector bundle $(E,\varphi)$ is polystable iff there 
exists a Hermitian metric on $E$ 
whose Chern connection $\nabla$ (taken with respect to the given holomorphic structure on $E$) satisfies
\begin{equation}\label{eq:NAH}
F^\nabla
+[\varphi,\varphi^\dagger  ]=0, \quad 
\bar{\partial}^\nabla \varphi=0,
\end{equation}
see \cite[Theorem 1]{SimpsonHiggs}.
Since \eqref{eq:NAH} is still satisfied after passing from the polystable Higgs vector 
bundle $(E,\varphi)$ to the Higgs vector bundle $(\jmath(E),\jmath_*(\varphi))$, the latter is polystable as well. 
Therefore 
\begin{equation}
\iota_{\mh}\colon \mh(\cu, G)\longrightarrow \mh(\cu,G_h),\quad [(E,\varphi)]\mapsto [(E_h,\varphi_h)]
\end{equation}
is well-defined.
The equality $a\circ \jmath=\jmath$ implies that the image of $\iota_{\mh}$ lies in $\mh(\cu,G_h)^\Cc$.
Since $\bc_h$ is $\Cc$-equivariant, $\iota_{\mh}$ is defined over $\iota(\Bb)\subset \Bb_h$. 

We next show the injectivity of $\iota_{\mh}$.
In \cite[Proposition 2.20]{GPR} it is proven\footnote{This proposition is stated for $\mathrm{ord}(a)=2$ but generalizes to arbitrary finite order.} that
	\begin{equation*}
	N_{G_h}(G_h^{\Cc})
	=\{ g\in G_h~|~a(g)=c(g)g\quad \mbox{for some }c(g)\in Z(G_h)  \}.
	\end{equation*}
Since $G\cong (G_h^\Cc)^\circ$ by Proposition \ref{foldedgroup} and $Z(G_h)=\{\id\}$, we conclude $G=N_{G_h}(G)$.
Hence $\iota_{\mh}$ is injective by Lemma \ref{lem:normalizer}.
\end{proof}
\begin{ex}
Following Example \ref{runHiggs}, we use \eqref{eq:MHex} to
represent isomorphism classes of Higgs vector bundles that correspond to
semistable principal $PSL_4(\C)$- and $PSp_4(\C)$-Higgs 
bundles, respectively. 
Then the inclusion of Proposition \ref{inclusion} is given by 
	\begin{equation*}
	\iota_\mh\colon \mh(\cu, PSp_4(\C))\to \mh(\cu,PSL_4(\C)),\quad [(F,\langle \cdot, \cdot \rangle,\psi)] \mapsto [(F,\psi)].
	\end{equation*}
Note that $(F,\psi)$ is indeed an $SL_4(\C)$-Higgs vector bundle because it is an $Sp_4(\C)$-Higgs vector bundle, so in particular $\langle \cdot, \cdot \rangle$ induces a trivialization of $\Lambda^4 F$ and $\psi$ is a trace-free Higgs field.
\end{ex}

\begin{rem}\label{rem:GPR1}
	In \cite[Theorem 6.3]{GPR} 
	it is shown that if $\Cc\neq\{\id\}$, then $\mathcal{M}(\cu,G)\subsetneq \mathcal{M}(\cu,G_h)^\Cc$.
	For example if $G_h=PSL_{2n}(\C)$, then 
	\begin{equation*}
	\mh(\cu,PSO_{2n}(\C))\cup \mathcal{M}(\cu, PSp_{2n}(\C))\hookrightarrow \mh(\cu,PSL_{2n}(\C))^\Cc,
	\end{equation*}
	see \cite[Section 9.2]{GPR}.
	Observe that $G=PSp_{2n}(\C)$ is the group obtained by folding by the 
	involution $a(A)=J(A^T)^{-1}J^{-1}$, where $J$ is the 
	standard symplectic matrix as in Example \ref{rungroupfold}.
	On the other hand, $PSO_{2n}(\C)$ arises from folding by the involution $\sigma(A)=(A^T)^{-1}$. 
	Note that both $a$ and $\sigma$ induce the same outer automorphism on the Dynkin diagram.
	\end{rem}
	In the remainder of this section, we show that 
	$\mh(\cu,G_h)^\Cc- \iota_\mh(\mh(\cu,G))$ is empty over a 
	suitable open and dense subset of $\iota(\Bb)$. 

\begin{prop}\label{p:isosheaves}
Choose an arbitrary $b\in \Bb$ which is transversal to $\mathrm{discr}(\bm{q})$.
	 Then the sheaves $((p_{\iota(b)}^h)_\ast\bLambda_h)^{W_h}$ 
	 and $({p_{b}}_\ast\bLambda_h)^W$ 
	 are naturally isomorphic to each other. 
\end{prop}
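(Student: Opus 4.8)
The plan is to derive the claim from the elementary induction--restriction adjunction for the pair of finite groups $W\subset W_h$, applied section by section to the two push\nobreakdash-forward sheaves. Write $\pi:={p_b}$ and $\pi_h:=p_{\iota(b)}^h$ for the two cameral covers of $\cu$, and regard $\bLambda_h$ as a $W_h$-module through the Weyl action, hence as a $W$-module by restriction along the embedding $W\hookrightarrow W_h$ of Proposition \ref{p:foldingweyl}. Because $b$ is transversal to $\mathrm{discr}(\bq)$, Proposition \ref{p:foldingcameral} gives the canonical $W_h$-equivariant isomorphism $\widetilde{\cu}_{h,\iota(b)}\cong(\widetilde{\cu}_b\times W_h)/W$ over $\cu$ of \eqref{eq:cams}; that is, $\widetilde{\cu}_{h,\iota(b)}$ is the $W_h$-cover induced from the $W$-cover $\widetilde{\cu}_b$ along $W\hookrightarrow W_h$, and under this identification the inclusion $i\colon\widetilde{\cu}_b\hookrightarrow\widetilde{\cu}_{h,\iota(b)}$ of \eqref{eq:i_wh} (for the trivial coset) is $x\mapsto[(x,1)]$.

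The key step is then purely formal. Fix an open $U\subseteq\cu$. Since preimages commute with the balanced-product construction, $\pi_h^{-1}(U)\cong(\pi^{-1}(U)\times W_h)/W$ as $W_h$-spaces over $U$. A section of $(p_{\iota(b)}^h)_\ast\bLambda_h$ over $U$ invariant under $W_h$ is exactly a locally constant map $\pi_h^{-1}(U)\to\bLambda_h$ that is equivariant for the deck action on the source and the Weyl action on the target, and a section of ${p_b}_\ast\bLambda_h$ over $U$ invariant under $W$ is exactly a $W$-equivariant locally constant map $\pi^{-1}(U)\to\bLambda_h$. Restriction of sections along $i$ therefore defines a homomorphism
\begin{equation*}
\big((p_{\iota(b)}^h)_\ast\bLambda_h\big)^{W_h}(U)\longrightarrow\big({p_b}_\ast\bLambda_h\big)^{W}(U),
\end{equation*}
and it is bijective: its inverse sends a $W$-equivariant $f\colon\pi^{-1}(U)\to\bLambda_h$ to $[(x,g)]\mapsto g\cdot f(x)$, which is well defined (as $f$ is $W$-equivariant and $\bLambda_h$ a $W$-module), locally constant, and $W_h$-equivariant. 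This is nothing but the adjunction $\mathrm{Map}_{W_h}((Z\times W_h)/W,\bLambda_h)\cong\mathrm{Map}_{W}(Z,\bLambda_h)$ for a $W$-set $Z$, equivalently $\mathrm{Hom}_{\Z[W_h]}(\Z[W_h]\otimes_{\Z[W]}-,\bLambda_h)\cong\mathrm{Hom}_{\Z[W]}(-,\bLambda_h)$. As the map is induced by restriction along the single morphism $i$, it commutes with the restriction maps of the two sheaves when $U$ is shrunk, so these isomorphisms of abelian groups glue to a canonical isomorphism of sheaves on $\cu$.

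I expect the only delicate part to be the bookkeeping that makes the above run: one must verify that the $W$-action on $\bLambda_h$ built into $({p_b}_\ast\bLambda_h)^{W}$ really is the restriction along $W\hookrightarrow W_h$ of the $W_h$-action built into $((p_{\iota(b)}^h)_\ast\bLambda_h)^{W_h}$, and that the $W_h$-action on $\widetilde{\cu}_{h,\iota(b)}$ corresponds under \eqref{eq:cams} to left translation on the $W_h$-factor with $W$ acting diagonally --- this is precisely what Proposition \ref{p:foldingweyl} and formula \eqref{eq:weylfolding} provide, and they also match the local monodromy of $\pi_h$ around each branch point with the image in $W_h$ of the corresponding reflection monodromy of $\pi$. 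As a consistency check one may look at stalks: over the branch-free locus $\cu^\circ$ both sheaves restrict to the local system with underlying group $\bLambda_h$ and monodromy $\pi_1(\cu^\circ)\to W\hookrightarrow W_h\to GL(\bLambda_h)$, while at a branch point with monodromy (the image of) a reflection $s_\beta\in W$ the stalk of either sheaf is $\bLambda_h^{s_\beta}=\ker(s_\beta-1\colon\bLambda_h\to\bLambda_h)$ by Shapiro's lemma, which recovers the usual local description of $({p_b}_\ast\bLambda_h)^W$.
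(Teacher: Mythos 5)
Your proof is correct and is essentially the paper's own argument: both rest on the identification $\widetilde{\cu}_{h,\iota(b)}\cong(\widetilde{\cu}_b\times W_h)/W$ from Proposition \ref{p:foldingcameral}, and your isomorphism (restriction of equivariant sections along $i$, with inverse $[(x,g)]\mapsto g\cdot f(x)$) is exactly the paper's map $f\mapsto f_1$, just packaged as the induction--restriction adjunction rather than via the explicit coset decomposition $\bigoplus_{[w_j]\in W_h/W}{p_b}_\ast\bLambda_h$.
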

\begin{proof}
	By \eqref{eq:i_wh} we have $p_b=p_{\iota(b)}^h\circ i$ and
	\begin{gather*}
	(p_{\iota(b)}^h)_\ast(\bLambda_h)\cong  \bigoplus_{[w_j]\in W_h/W}{p_b}_\ast(\bLambda_h), \quad
	f\mapsto (f_j)_{[w_j]\in W_h/W},
	\end{gather*}
	Here the sections $f_j$ are uniquely determined by
	\begin{equation*}
	f_j \circ i= (w_j^{-1})^\ast (f_{| i_{w_j}(\widetilde\cu_b)}).
	\end{equation*}
	Restricting this isomorphism to $W_h$-equivariant sections $f$, one obtains $f_j\circ i =f_{i(\tcu_b)}\in ({p_b}_\ast(\bLambda_h))^W$ by the $W_h$-equivariance and $i_{w_j}=w_j\circ i$.
	Hence $f\mapsto f_1$ 
	yields the desired isomorphism $((p_{\iota(b)}^h)_\ast\bLambda_h)^{W_h}\cong 
	({p_b}_\ast(\bLambda_h))^W$.
\end{proof}

\begin{thm}\label{thm:hitchinfibers}
Let $\Bbo\subset \Bb$ denote the open set of sections that are transversal to $\mathrm{discr}(\bq)$. 
Then for any $b\in\Bbo$ the Hitchin fibers $P=P_b$ and $P_h=P_{h,b}$ are smooth. 
Moreover,  
\begin{equation}\label{eq:isointsys}
\begin{tikzcd}
\mh(\cu,G)_{|\Bbo} \ar[r] \ar[d, "\bm{\chi}"'] &  \mh(\cu,G_h)_{|\Bbo}^\Cc\ar[d, "\bm{\chi}_h"] \\
\Bbo \ar[r] & \Bbo
\end{tikzcd}
\end{equation}
is an isomorphim of smooth algebraic integrable systems over $\Bbo$.
Here and in the following, we identify $\Bb$ with its image $\iota(\Bb)$ in $\Bb_h$ 
under the isomorphism $\iota\colon \Bb\to \Bb_h^\Cc$ induced by $\tfr/W\cong (\tfr_h/W_h)^\Cc$ according to \eqref{BisBhC}.
\end{thm}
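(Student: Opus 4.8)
The plan is to derive the theorem from Propositions \ref{p:foldingcameral}, \ref{inclusion} and \ref{p:isosheaves}, handling smoothness, the fibrewise identification, and surjectivity in turn. First I would observe that, for $b\in\Bbo$, transversality of $b$ to $\mathrm{discr}(\bq)$ makes $\tcu_b$ and $\tcu_{h,\iota(b)}$ smooth by Proposition \ref{p:foldingcameral}, hence by the discussion of \S\ref{sec:genericHitch} (and \cite{DG,DP,Beck}) the Hitchin fibres $\bc^{-1}(b)$ and $\bc_h^{-1}(\iota(b))$ are smooth torsors over abelian varieties $P=P_b$ and $P_h=P_{h,b}$ with cocharacter lattice \eqref{eq:cocharP} and universal cover \eqref{eq:universalcoveringP}; so $\mh(\cu,G)_{|\Bbo}$, $\mh(\cu,G_h)_{|\Bbo}$ and the fixed locus $\mh(\cu,G_h)_{|\Bbo}^\Cc$ are all smooth. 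We already have the injective holomorphic $\Bbo$-morphism $\iota_\mh$ of Proposition \ref{inclusion}, and since a bijective holomorphic map of complex manifolds is biholomorphic, it will be enough to show that, for every $b\in\Bbo$, the restriction of $\iota_\mh$ is an isomorphism of torsors $\bc^{-1}(b)\cong(\bc_h^{-1}(\iota(b)))^\Cc$ and that $\iota_\mh$ intertwines the symplectic structures; commutativity of \eqref{eq:isointsys} is then immediate from the constructions of $\iota_\mh$ and $\iota$.

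For the fibrewise identification, recall $\bLambda=\bLambda_h^\Cc$ (Proposition \ref{foldedgroup}) and $W=W_h^\Cc$ (Proposition \ref{p:foldingweyl}); thus $\bLambda_h$ is a $W\times\Cc$-module on which $\Cc$ acts only through the coefficients, while $\tcu_b$ carries no $\Cc$-action, so $\Cc$-invariants commute with $(p_b)_\ast$ and with $(-)^W$ and one gets $\big(((p_b)_\ast\bLambda_h)^W\big)^\Cc=((p_b)_\ast\bLambda)^W$, and likewise with $\tfr_h$ in place of $\bLambda_h$. Combining this with Proposition \ref{p:isosheaves} and its evident $\tfr_h$-analogue (which identifies the universal covers $H^1(\tcu_{h,\iota(b)},\tfr_h)^{W_h}\cong H^1(\tcu_b,\tfr_h)^W$), and then taking cohomology --- which commutes with $(-)^\Cc$ over $\C$, while over $\Z$ one uses that $\bLambda=\bLambda_h^\Cc$ is a \emph{saturated} sublattice of $\bLambda_h$, so that a rank count against the $\C$-linear identification of universal covers pins down the integral lattices --- I would conclude that the homomorphism $P\to P_h$ induced by $G\hookrightarrow G_h$ is injective with image the identity component $(P_h^\Cc)^\circ$, compatibly with the polarizations coming from the intersection pairing on the cameral curve. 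As $\iota_\mh$ is $P$-to-$P_h$-equivariant and injective on fibres, its restriction is then an isomorphism of $\bc^{-1}(b)$ onto the $(P_h^\Cc)^\circ$-torsor it meets inside $(\bc_h^{-1}(\iota(b)))^\Cc$.

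The remaining --- and hardest --- point is that $(\bc_h^{-1}(\iota(b)))^\Cc$ is \emph{connected} for $b\in\Bbo$, equivalently that $\pi_0(P_h^\Cc)\cong H^1(\Cc,\mathrm{cochar}(P_h))$ vanishes, so that the $\Cc$-fixed fibre coincides with $\iota_\mh(\bc^{-1}(b))$; this is exactly where the transversality built into $\Bbo$ enters. The idea is that the extra components of $\mh(\cu,G_h)^\Cc$ which occur over the whole base (cf.\ Remark \ref{rem:GPR1}) project to a \emph{proper closed} subvariety of $\Bb_h^\Cc$ --- for instance, in the $\mathrm A_{2n-1}$-case, to sections whose top Chevalley coordinate is a perfect square, carrying the $\mathrm{SO}$-type component --- which is disjoint from $\Bbo$, while over $\Bbo$ the decomposition $\tcu_{h,\iota(b)}\cong(\tcu_b\times W_h)/W$ of Proposition \ref{p:foldingcameral} together with the saturation noted above forces the relevant $H^1(\Cc,-)$ to vanish. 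Finally I would check, using the description of the symplectic forms on $\mh(\cu,G)^{sm}$ and $\mh(\cu,G_h)^{sm}$ via the Killing pairing on the first cohomology of $\cu$ with values in the adjoint bundle together with the $\Cc$-orthogonal splitting $\gfr_h=\gfr\oplus\gfr^{\perp}$ with $\gfr=\gfr_h^\Cc$ (cf.\ the proof of Proposition \ref{prop:isoinv}), that the restriction of $\omega_{\mh(\cu,G_h)}$ to the smooth symplectic locus $\mh(\cu,G_h)^\Cc$ pulls back under $\iota_\mh$ to $\omega_{\mh(\cu,G)}$ (up to the positive constant comparing the two Killing forms). I expect the connectedness of the generic $\Cc$-fixed fibres --- i.e.\ the exclusion of the spurious components over $\Bbo$ --- to be the real obstacle; everything else is organised bookkeeping around Propositions \ref{p:foldingcameral}, \ref{inclusion} and \ref{p:isosheaves}.
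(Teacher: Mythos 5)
Your overall architecture matches the paper's: smoothness from Proposition \ref{p:foldingcameral}, the injection from Proposition \ref{inclusion}, reduction to cocharacter lattices via \eqref{eq:cocharP}, and Proposition \ref{p:isosheaves} to pass from $W_h$-invariants on $\widetilde\cu_{h,\iota(b)}$ to $W$-invariants on $\widetilde\cu_b$. You have also correctly located the crux: showing that
$H^1(\cu,(p_{b*}\bLambda)^W)\longrightarrow H^1(\cu,(p_{b*}\bLambda_h)^W)^\Cc$
is an isomorphism, equivalently that $P_{h,b}^\Cc$ is connected and the image is all of it. But this is exactly where your argument has a genuine gap. The paper disposes of this step by citing \cite[Proposition 5.1.2]{Beck}; you instead offer two heuristics, neither of which closes the issue. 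The ``saturation plus rank count'' argument only controls the situation after tensoring with $\Q$: saturation of $\bLambda=\bLambda_h^\Cc$ in $\bLambda_h$ does not by itself prevent the map on integral $H^1$ from having finite cokernel (the obstruction lives in group cohomology of $\Cc$ with coefficients in the relevant lattices, e.g.\ a class in $H^1(\Cc,H^1(\cu,(p_{b*}\bLambda_h)^W))$, and finite groups can certainly have nonzero $H^1$ with lattice coefficients). And the proposed exclusion of ``spurious components'' by showing they lie over a proper closed subvariety of $\Bb_h^\Cc$ disjoint from $\Bbo$ is asserted, not proved; note that the extra components of Remark \ref{rem:GPR1} (e.g.\ the $PSO_{2n}$ locus) are not a priori confined to a closed subset of the base, so this really does require the lattice-cohomology computation that \cite[Proposition 5.1.2]{Beck} supplies. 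As written, your proof of surjectivity of $P_b\to P_{h,b}^\Cc$ is incomplete.

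A secondary divergence: for the symplectic statement you propose a direct comparison of the two forms via the Killing pairing and the splitting $\gfr_h=\gfr\oplus\gfr^\perp$, conceding that they may agree only ``up to a positive constant.'' That concession is fatal for the claim as stated (a constant $\neq 1$ would mean the map is not a symplectomorphism), and pinning the constant down requires care about how the holomorphic symplectic form on $\mh(\cu,H)^{sm}$ is normalized. The paper avoids this entirely with a softer argument: both sides admit Lagrangian (Hitchin) sections, hence are of the form $T^*\Bbo/\Gamma$ and $T^*\Bbo/\Gamma'$ with the symplectic structures induced by the \emph{same} canonical form $\omega_c$ on $T^*\Bbo$; the already-established biholomorphism over $\Bbo$ identifies $\Gamma$ with $\Gamma'$, and the symplectomorphism property follows for free. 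You should either adopt that argument or actually compute the constant.
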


\begin{proof}
Proposition \ref{p:foldingcameral} implies
that $\bc^{-1}(b)$ and $\bc_h^{-1}(b)$, $b\in \Bbo$, are smooth
since
$\bc^{-1}(b)\cong P_b$ and $\bc_h^{-1}(b)\cong P_{h,b}$, see Section \ref{sec:genericHitch}.

To prove that  \eqref{eq:isointsys} is an isomorphism of 
algebraic integrable systems, we first show that the top horizontal arrow of \eqref{eq:isointsys} is biholomorphic. 
Since both $\mh(\cu,G)$ and $\mh(\cu,G_h)$ are smooth over $\Bbo$ with fibers 
$P_b$ and $P_{h,b}$ over $b\in \Bbo$ respectively, 
it is sufficient to prove that the morphism 
\begin{equation}\label{eq:inducediso}
P_b\hookrightarrow P_{h,b}^\Cc
\end{equation}
is an isomorphism. 
It is an inclusion by Proposition \ref{inclusion}. 
Since the horizontal arrows of \eqref{eq:isointsys} are holomorphic by Proposition \ref{inclusion}, 
it is sufficient to prove \eqref{eq:inducediso} on the level of cocharacter lattices. 
Both $G$ and $G_h$ are of adjoint type, so that
\begin{equation*}
\mathrm{cochar}(P_b)=H^1(\cu,(p_{*b}\bLambda)^W),\quad 
\mathrm{cochar}(P_{h,b})=H^1(\cu,({p_{b}^{h}}_\ast\bLambda_h)^{W_h}),
\end{equation*}
see \eqref{eq:cocharP}.
By Proposition \ref{p:isosheaves}, 
$H^1(\cu, {p_b^h}_\ast \bLambda_h)^{W_h})\cong H^1(\cu,(p_{b*}\bLambda_h)^W)$.
Hence it remains to prove that the morphism 
\begin{equation*}
H^1(\cu,(p_{b*}\bLambda)^W)\longrightarrow H^1(\cu,(p_{b*}\bLambda_h)^W)^\Cc
\end{equation*}
induced by the inclusion $\bLambda\hookrightarrow \bLambda_h$ is an isomorphism. 
But this follows from \cite[Proposition 5.1.2]{Beck}. 
Therefore the top horizontal arrow of \eqref{eq:isointsys} is a biholomorphism. 

To see that it is a symplectomorphism, first note that both spaces in \eqref{eq:isointsys} are algebraic integrable systems over $\Bbo$ which admit Lagrangian sections, 
e.g.~Hitchin sections, see \cite[Lemma 4.1]{DP}.
The holomorphic symplectic structures therefore
induce symplectomorphisms
\begin{equation}\label{eq:isoTB}
\mh(\cu,G)_{|\Bbo}\cong T^*\Bbo/\Gamma,\quad \mh(\cu,G_h)_{|\Bbo}^\Cc\cong T^*\Bbo/\Gamma^\prime,
\end{equation}
see e.g. \cite[\S 3]{Freed}.
Here $\Gamma$ and $\Gamma^\prime$ are complex submanifolds of 
$T^*\Bbo$ which restrict to lattices in the fibers, and the quotients are taken fiberwise. 
They are Lagrangian with respect to the canonical symplectic structure 
$\omega_c$ on $T^*\Bbo$ so that $\omega_c$ descends to the quotients 
$T^*\Bbo/\Gamma$ and $T^*\Bb/\Gamma^\prime$. 
The isomorphisms \eqref{eq:isoTB} are compatible with \eqref{eq:isointsys} giving the commutative diagram
\begin{equation*}
\begin{tikzcd}
\mh(\cu,G)_{|\Bbo} \ar[r,"\cong"] \ar[d, "\cong"'] & \mh(\cu,G_h)^\Cc_{|\Bbo} \ar[d, "\cong"] \\
T^*\Bbo/\Gamma \ar[r] & T^*\Bbo/\Gamma^\prime,
\end{tikzcd}
\end{equation*}
where the vertical arrows are biholomorphic symplectomorphisms, while the 
upper horizontal arrow is biholomorphic. This implies that the lower horizontal
arrow is biholomorphic as well, and thus $\Gamma$ and $\Gamma^\prime$
are isomorphic. Since on both sides of this map, the symplectic structure is
induced by $\omega_c$, it follows that this map is a symplectomorphism.
Hence the upper horizontal arrow
 \eqref{eq:isointsys} is a symplectomorphism as well, concluding the proof. 
\end{proof}

\section{Crepant resolutions and Hitchin systems}\label{s:crepantresol}
Generalizing the results of \cite{DDP} for trivial $\Cc$, in \cite{Beck2},
a family of quasi-projective Gorenstein Calabi--Yau threefolds 
$\mathcal{X}\longrightarrow\Bb,\, \Bb=\Bb(\cu,G)$, with a $\Cc$-action 
was constructed for any cyclic group $\Cc$ of Dynkin graph automorphisms. 
Moreover, there it was shown that the canonical line bundles of these threefolds 
admit global nowhere-vanishing and $\Cc$-invariant sections. We say that 
their canonical classes are \emph{$\Cc$-trivializable}.
In \cite[Theorem 6]{Beck2}, the first author constructed an isomorphism
\begin{equation*}
J^2_\Cc(\X^\circ/\Bb^\circ)\cong \mh(\cu,G)_{|\Bbo},\quad 
\Bbo:=\Bbo(\cu,G),\quad 
\mathcal{X}^\circ:=\mathcal{X}_{|\Bbo},
\end{equation*}
of smooth algebraic integrable systems with section based on previous results from \cite{Beck}.
Here, for the fiber $X_b$ of $\X$ over $b\in\Bbo$,
\begin{equation*}
J^2_\Cc(X_b) :=H^3(X_b,\C)^\Cc/\left(F^2H^3(X_b,\C)^\Cc+H^3(X_b,\Z)^\Cc\right)
\end{equation*}
is isomorphic to the orbifold intermediate Jacobian
of the Calabi--Yau orbifold stack $[X_b/\Cc]$, see \cite[\S 4]{Beck2} for more details.

In this section, we construct simultaneous crepant resolutions of the singular quotients $X_b/\Cc$, $b\in \Bbo$. 
Moreover, we show that the Griffiths intermediate Jacobians of these crepant resolutions contain algebraic integrable systems that are isogenous 
to the folded Hitchin systems of Theorem \ref{thm:hitchinfibers}. 

\subsection{Quasi-projective Calabi--Yau threefolds over Hitchin bases}\label{ss:qcy}
As before, let $\gfr=\gfr(\Delta)$ denote the simple complex Lie algebra 
determined by $\Delta=\Delta_{h, \Cc}$, and $\cu$ a compact Riemann surface of genus 
\fontdimen16\textfont2=4pt
$g_\cu\geq 2$. 
\fontdimen16\textfont2=2.5pt
We choose a Slodowy slice
$\sigma\colon S\longrightarrow \tfr/W$ and a theta characteristic $L\longrightarrow\cu$, i.e. $L^2=K_{\Sigma}$. 
Using the $\C^*$-action on $S$ given in \eqref{CstaronS}, 
we twist $S$ by $L$, considered as a $\C^*$-bundle, to obtain the family of surfaces
\begin{equation}\label{eq:Scal}
\mathcal{S}:=L\times_{\C^*} S,\qquad
\bsigma\colon\mathcal{S} \longrightarrow L\times_{\C^*}\tfr/W.
\end{equation}
Here $\C^*$ acts with twice the usual weights on $\tfr/W$, cf.~Remark \ref{rem:Cstar}, 
so that $L\times_{\C^*}\tfr/W\cong \Ub$. 
Except for the case $(\Delta_h,\,\Cc)=(\mathrm A_{2k},\,\{\id\})$, $k\in\mathbb N$, we could alternatively work with a $\C^\ast$-action
on $S$ which squares to \eqref{CstaronS} and then twist by $K_\cu$ instead of $L$ in \eqref{eq:Scal},
as also follows from  Remark \ref{rem:Cstar}.

Pulling back the family $\mathcal{S}\longrightarrow \Ub$ of surfaces via the evaluation 
map $ev\colon\cu\times \Bb \longrightarrow \Ub$ and projecting to the second factor yields a family 
\begin{equation*}
\begin{tikzcd}
\X:=ev^*\mathcal{S} \ar[rr, bend left, "\bm{\pi}"] \ar[r] & \cu \times \Bb \ar[r] & \Bb
\end{tikzcd}
\end{equation*}
of  threefolds. 
By construction, the $\Cc$-action on $S$ induces a $\Cc$-action on $\mathcal{S}$ 
and therefore a $\Cc$-action on $\X$ as well. 
Since $\Cc$ acts trivially on the base $\Bb$ (see \eqref{BisBhC}),
the fibers $X_b:=\bm{\pi}^{-1}(b)$, $b\in \Bb$, 
are preserved by the $\Cc$-action. 
In \cite[\S 3.3.]{Beck2}, it is proven that $\X\longrightarrow \Bb$ is an 
algebraic family of quasi-projective Gorenstein threefolds with $\Cc$-trivializable
canonical class. 
For trivial $\Cc$, this was first observed in \cite{Sz1,DDP}.

By construction, each $X_b$, $b\in \Bb$, admits a map $X_b\to \cu$ which is affine.
This allows us to realize each of the threefolds $X_b$, $b\in \Bb$, as a hypersurface in 
the total space of a sum of powers of $K_\cu$ which is affine over $\cu$.
We next present the explicit equations for our running example and for $\Delta=\Delta_h^\Cc=\mathrm{G}_2$ with 
$\Delta_h=\mathrm{D}_4$ and $\Cc=\Z/3\Z$ which will be useful for later computations.
\begin{ex}\label{ex:A3cys}
For $\Delta=\mathrm{C}_2$, we choose $b=(b_2,b_4)\in 
\Bb$ using $\Bb\cong H^0(\cu, K_\cu^{2})\oplus H^0(\cu, K_\cu^{4})$, cf. \eqref{eq:runBBh}.
By Example \ref{ex:A3-sing}, using $L^2=K_\cu$ and writing tensor 
products as products, we have
\begin{equation*}
X_b=\{ (\alpha_1, \alpha_2,\alpha_3) \in \mathrm{tot}(K_\cu\oplus K_\cu^{2 }\oplus K_\cu^{2})
 ~|~[\alpha^4_1-\alpha_2\alpha_3 + b_2 \alpha_1^2 + b_4](\sigma) 
 =0,\, \sigma\in\cu \}.
\end{equation*}
The $\Cc$-action is defined by 
$(\alpha_1, \alpha_2,\alpha_3)\mapsto (-\alpha_1, \alpha_3,\alpha_2)$. 
Hence the $\Cc$-fixed point locus is
\begin{equation*}
X^\Cc=\{\alpha\in \mathrm{tot}(K_\cu^2)~|~[\alpha^2-b_4](\sigma)=0,\, \sigma\in \cu  \}
\end{equation*}
which has genus $6g_\cu-5$.
By varying $b\in \Bb$, we arrive at the family $\mathcal{X}\to \Bb$ with fiberwise $\Cc$-action.
Note that this is the global description of our family of Calabi--Yau threefolds
mentioned in \cite[Proof of Proposition 2.3]{DDP}, which is worked out in detail 
in \cite{Beck-thesis}.
\end{ex}

\begin{ex}\label{ex:g2}
Let $\Delta=\Delta_{h,\Cc}=\mathrm{G}_2$ with $\Delta_h=\mathrm{D}_4$ and $\Cc=\Z/3\Z$.
By \cite[\S 6.2(3)]{Slo}, the $\mathrm{\Delta_h}$-singularity $Y$
is determined by the equation
\begin{equation*}
x^3+y^3+z^2=0
\end{equation*}
with $\C^*$-action $(\lambda , (x,y,z))\mapsto (\lambda^4x, \lambda^4 y, \lambda^6 z)$.
Note that $(x,y,z)$ have weights $(2\mathrm w_x, 2\mathrm w_y, 2\mathrm w_z)$
with coprime  $(\mathrm w_x, \mathrm w_y, \mathrm w_z)=(2,2,3)$, in accord with Remark
\ref{rem:Cstar}.
The Jacobian ring is $\C[x,y,z]/(x^2, y^2, z)$, a quasi-homogeneous basis of which is represented by $(xy,\, x,\, y,\, 1)$. 
Thus a semi-universal $\C^\ast$-deformation
is given by 
\begin{eqnarray*}
&\mathcal Y_h:= \left\{ ( x,y, z, \beta_2, \beta_4, \widetilde \beta_4, \beta_6)\in\C^3\times\C^4 ~|~
x^3+y^3+z^2 +\beta_2xy+ \beta_4x+ \widetilde\beta_4y+ \beta_6 =0\right\},\\
&\mathcal Y_h\longrightarrow\C^4,\qquad
( x,y, z, \beta_2, \beta_4, \widetilde \beta_4, \beta_6) \mapsto (\beta_2, \beta_4, \widetilde\beta_4, \beta_6).
\end{eqnarray*}
As $\C^\ast$-action  we have
$$
\lambda\cdot ( x,y, z, \beta_2, \beta_4, \widetilde\beta_4, \beta_6)
= ( \lambda^4 x,\lambda^4y, \lambda^6z, 
\lambda^4\beta_2, \lambda^8\beta_4, \lambda^8\widetilde\beta_4, \lambda^{12}\beta_6).
$$
The $\Cc$-action is induced by $(x,y,z)\mapsto (\mu x,\mu^2 y, z)$ 
for any primitive third root $\mu$ of unity, which extends to
$( x,y,z, \beta_2, \beta_4, \widetilde\beta_4, \beta_6) \mapsto 
(\mu x,\mu^2 y, z, \beta_2, \mu^2\beta_4, \mu\widetilde\beta_4, \beta_6) $,
yielding the following semi-universal $\C^\ast$-deformation of $(Y,\Cc)$:
\begin{eqnarray*}
\left\{ ( x,y, z, \beta_2, \beta_6)\in\C^3\times\C^2 ~|~
x^3+y^3+z^2 +\beta_2xy+ \beta_6 =0\right\}
&\longrightarrow&\C^2,\\
( x,y, z, \beta_2,  \beta_6) &\mapsto& (\beta_2,  \beta_6).
\end{eqnarray*}
Using \eqref{Bdecomposes}, one finds that the Hitchin base 
$\Bb$ for $\Delta$ is isomorphic to $H^0(\cu,K_\cu^{2})\oplus H^0(\cu, K_\cu^{6})$.
Again fixing such an isomorphism we write any $b\in \Bb$ as $(b_2,b_6)$ for 
$b_j\in H^0(\cu, K_\cu^{\otimes j})$, $j\in \{ 2,6\}$.
Arguing as in the previous example, we see that the corresponding threefold $X_b$ is given by
\begin{equation*}
X_b=\{(\alpha_1, \alpha_2,\alpha_3)\in \mathrm{tot}(K_\cu^{2}\oplus K_\cu^{2}\oplus K_\cu^3)
~|~[\alpha^3_1+\alpha_2^3+\alpha_3^2 + b_2\alpha_1\alpha_2  + b_6](\sigma)=0,\, 
\sigma\in\cu \}.
\end{equation*}
The $\Cc$-action is induced by $(\alpha_1, \alpha_2,\alpha_3)\mapsto (\mu \alpha_1, \mu^2 \alpha_2,\alpha_3)$ so that 
$$
X^\Cc\cong \{ \gamma\in \mathrm{tot}(K_\cu^3)~|~[\gamma^2 +  b_6](\sigma)=0,\, \sigma\in\cu \}.
$$
Note that $X^\Cc$ has genus $8g_\cu-7$.
Varying $b\in \Bb$ yields the family $\mathcal{X}\to \Bb$ with $\Cc$-action. 
\end{ex}
\begin{rem}
While Slodowy slices allow a uniform construction of the families $\bm{\pi}\colon \mathcal{X}\to \Bb$ 
and to prove some of their properties, e.g. that $\bm{\pi}$ is smooth over $\Bbo$ \cite[\S 3.3.]{Beck2}, 
it is often useful to work with explicit equations along the lines of the above examples.
\end{rem}

\subsection{Crepant resolutions and their intermediate Jacobians}\label{ss:crepantresolJ2}
In this section, we construct unique crepant resolutions of the quotients 
$X_b/\Cc$
by the action of $\Cc$ for the family of Calabi--Yau threefolds $\X$ 
introduced above, restricted to $\Bbo$.
To make sure that each quotient $X_b/\Cc$, $b\in \Bbo$, admits a crepant resolution, we need the following:
\begin{lem}\label{lem:finitecover}
Let $S=x+\ker \mathrm{ad}_y\subset \gfr$ be our choice 
of Slodowy slice with $\Cc$-action, and assume that $\Cc$ is non-trivial. 
Then the fixed point locus $S^\Cc\subset S$ is finite over $\tfr/W$. 
\end{lem}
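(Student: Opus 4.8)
The plan is to show that $\sigma$ restricts to a \emph{finite} morphism $\sigma^\Cc\colon S^\Cc\to\tfr/W$ by combining the conical $\C^*$-actions on both sides with the observation that this morphism has a one-point fibre over the origin. First I would record that the $\Cc$-action on $S$ is by \emph{inner} automorphisms of $\gfr$ -- namely conjugation by an element of $C(x,y)\subset G$ as in \eqref{innerC} -- so it leaves every fibre of $\sigma=\chi|_S$ invariant, and $\sigma$ therefore restricts to a morphism $\sigma^\Cc\colon S^\Cc\to\tfr/W$. Since $\Cc$ is a finite group commuting with the $\C^*$-action \eqref{CstaronS}, the fixed locus $S^\Cc$ is a $\C^*$-invariant closed subvariety of $S$ (it even contains the vertex $x$, which $\Cc$ fixes because $\Cc\hookrightarrow C(x,y)$), and $\sigma^\Cc$ is $\C^*$-equivariant.

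Next I would bring in the conical structure. By Remark~\ref{rem:Cstar}, $S$ equipped with \eqref{CstaronS} is $\C^*$-equivariantly isomorphic to the base of a semi-universal $\C^*$-deformation built from a Jacobian ring, whose deformation parameters all carry strictly positive weights; hence $\C[S]$ is $\Z_{\geq 0}$-graded with $\C[S]_0=\C$. Likewise $\C[\tfr/W]=\C[\chi_1,\dots,\chi_r]$ is $\Z_{\geq 0}$-graded with generators of positive degrees $2d_1,\dots,2d_r$. Consequently $\C[S^\Cc]$, being a graded quotient of $\C[S]$, is $\Z_{\geq 0}$-graded with $\C[S^\Cc]_0=\C$, and $(\sigma^\Cc)^*\colon\C[\tfr/W]\to\C[S^\Cc]$ is a homomorphism of $\Z_{\geq 0}$-graded rings.

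The geometric heart of the argument is the identity $(\sigma^\Cc)^{-1}(\bar 0)=S_{\bar 0}^\Cc=\{x\}$. Here I would invoke part~(a) of Theorem~\ref{thm:Slosli}: the pair $(S_{\bar 0},x)$ with its induced $\Cc$-action is a $\Delta$-singularity, and since $\Cc$ is non-trivial, the defining properties of a $\Delta$-singularity require $\Cc$ to act freely on $S_{\bar 0}\setminus\{x\}$. Thus the scheme-theoretic fibre $\Spec\bigl(\C[S^\Cc]\otimes_{\C[\tfr/W]}\C\bigr)$ is a $\C$-scheme of finite type supported at a single point, hence Artinian, so $\C[S^\Cc]\otimes_{\C[\tfr/W]}\C$ is finite-dimensional over $\C$.

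Finally I would conclude by the graded Nakayama lemma: a $\Z_{\geq 0}$-graded module over a $\Z_{\geq 0}$-graded $\C$-algebra with degree-zero part $\C$ is finitely generated as soon as its reduction modulo the irrelevant ideal is finite-dimensional. Applied to $M=\C[S^\Cc]$ over $R=\C[\tfr/W]$, this shows that $\C[S^\Cc]$ is a finitely generated $\C[\tfr/W]$-module, that is, $\sigma^\Cc$ is a finite morphism. (One may phrase the last two steps together as: $\sigma^\Cc$ is a $\C^*$-equivariant map over a base with a single attracting fixed point, the fibre over which is finite, hence $\sigma^\Cc$ is proper and quasi-finite, hence finite.) The only step that is more than bookkeeping is pinning down $S_{\bar 0}^\Cc=\{x\}$; once the freeness clause in the definition of a $\Delta$-singularity is brought to bear via Theorem~\ref{thm:Slosli}\,(a), the remainder is the standard conical-variety/Nakayama argument, and I expect no further obstacle.
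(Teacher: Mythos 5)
Your proof is correct, but it takes a genuinely different route from the paper's. The paper argues fiber by fiber: for $Y=\sigma^{-1}(\bar t)$ it invokes the symplectic form on $Y^{sm}$ coming from the Kostant--Kirillov form (Gan--Ginzburg), notes that $\Cc$ preserves it because it acts by the adjoint representation of $C(x,y)$ as in \eqref{innerC}, and then applies Cartan's linearization trick at a fixed point to get a finite-order element of $SL_2(\C)$, which is either the identity or fixes only the origin; non-triviality of $\Cc$ rules out the former, so the fixed points in each fiber are isolated, hence finite by algebraicity (the finitely many singular points being disposed of by Theorem \ref{thm:Slosli}\,\ref{thm:slob}). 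You instead localize all the fixed-point information at the \emph{central} fiber, where the freeness clause in the definition of a $\Delta$-singularity (via Theorem \ref{thm:Slosli}\,(a)) gives $S_{\bar 0}^\Cc=\{x\}$, and then use the conical $\C^\ast$-structure plus graded Nakayama to conclude. Your argument buys a strictly stronger conclusion --- $\sigma^\Cc$ is a \emph{finite morphism}, not merely quasi-finite --- and avoids symplectic geometry altogether, at the cost of leaning on Slodowy's non-trivial statement that the induced $\Cc$-action on $(S_{\bar 0},x)$ is free away from $x$; the paper's proof only needs the weaker parts of Theorem \ref{thm:Slosli} and yields exactly the fiberwise finiteness used later in Proposition \ref{LemResol}. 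Two small points on your write-up: the freeness in the definition of a $\Delta$-singularity is a priori a statement about the germ at $x$, so you should note explicitly that the $\C^\ast$-invariance of $S_{\bar 0}^\Cc$ (which you have already established) propagates freeness from a punctured neighborhood of $x$ to all of $S_{\bar 0}\setminus\{x\}$; and the detour through the Jacobian-ring model of Remark \ref{rem:Cstar} to see that all weights are positive is unnecessary --- the weights of \eqref{CstaronS} on $\ker\mathrm{ad}_y$ are $2-m$ with $m\le 0$ the $\mathrm{ad}_h$-eigenvalues, hence $\ge 2$.
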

\begin{proof}
Let $\bar{t}\in \tfr/W$ and 
$Y:=\sigma^{-1}(\bar{t})$ with smooth part $Y^{sm}\subset Y$. 
By Theorem \ref{thm:Slosli}, $Y^{sm}$ coincides with the set of regular elements of $\gfr$ in $Y$, i.e. $Y^{sm}=Y\cap \gfr^{reg}$. 
Moreover, the Kostant-Kirillov form on adjoint orbits, cf. \cite[Chapter 1]{ChrissGinzburg}, is known to
restrict to a symplectic form $\omega$ on $Y^{sm}$ by \cite[\S 7]{GG}.
As was explained in Section \ref{sec:Slodowy},
$\Cc$ is represented on the Slodowy slice by the adjoint action, hence
$\omega$ is left invariant under the $\Cc$-action, i.e. $a^\ast\omega=\omega$. 

Since $Y-Y^{sm}$ is finite by Theorem \ref{thm:Slosli} \ref{thm:slob}, it suffices to consider $Y^{sm}$. 
Therefore the rest of the proof is a standard application of 
Cartan's linearization trick \cite[Lemme 1]{Cartan},
which allows us to replace the automorphism $a\in\Cc$ by its linearization on the tangent space. 
Since $a^\ast\omega=\omega$, we obtain a special unitary map 
of finite order on $\C^2$, which is thus either the
identity or has the origin as its unique fixed point. Hence by the assumption that
$\Cc$ is non-trivial, the fixed points of $a$ in $Y$ are isolated. 
But $a$ is an algebraic automorphism so that $Y^a\subset Y$ is finite. 
\end{proof}

Lemma \ref{lem:finitecover} implies that each $X_b$, $b\in \Bbo$, has a one-dimensional fixed point locus $X_b^\Cc\subset X_b$ whose irreducible components 
descend to curves of singularities of type $\mathrm{A}_1$ or $\mathrm{A}_2$ in $X_b/\Cc$. 
Hence we may deduce

\begin{prop}\label{LemResol}
Let $X=X_b$, $b\in \Bbo$, be a smooth quasi-projective Calabi--Yau threefold with 
$\Cc$-action as constructed in Section \ref{ss:qcy}, and view
the fixed point locus $X^\Cc$ as a subset of $X/\Cc$. 
Then the blow up $\widetilde{X/\Cc}:=Bl_{X^{\Cc}}(X/\Cc)$ of $X/\Cc$ along $X^\Cc$
is the unique crepant resolution
\begin{equation*}
\widetilde{X/\Cc}\to X/\Cc
\end{equation*}
of $X/\Cc$.
\end{prop}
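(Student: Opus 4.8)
The approach is to reduce the whole statement to the classical theory of Du Val surface singularities by passing to an analytic neighbourhood of the fixed curve. The first step is to pin down the local picture. Since $\Cc$ is finite and $X$ is smooth, $X^\Cc$ is a smooth curve, and since $|a|$ is prime the $\Cc$-action is free on $X\smallsetminus X^\Cc$; together with Lemma~\ref{lem:finitecover} (which gives $X^\Cc\subsetneq X$) this shows that the singular locus of $X/\Cc$ is exactly the image of $X^\Cc$ and that the $\Cc$-action is nontrivial transversally to $X^\Cc$ at every point. By Cartan's linearisation trick \cite{Cartan} the action near a point of $X^\Cc$ is analytically conjugate to its linearisation, which is trivial along $X^\Cc$ and acts as a nontrivial finite cyclic subgroup of $GL(2,\C)$ on a transverse $2$-plane; the nowhere-vanishing $\Cc$-invariant holomorphic $3$-form provided by the $\Cc$-trivializability of $K_X$ forces this transverse action into $SL(2,\C)$, hence it is conjugate to the standard $\mu_{|a|}$-action $\mathrm{diag}(\zeta,\zeta^{-1})$, and $|a|\le 3$ leaves only type $\mathrm A_1$ (if $|a|=2$) or $\mathrm A_2$ (if $|a|=3$). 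Thus $X/\Cc$ is analytically locally isomorphic to $D\times Y_m$, with $D\subset\C$ a disc, $Y_m=\{xy=z^{m+1}\}$ the $\mathrm A_m$-singularity, $m=|a|-1\in\{1,2\}$, and $X^\Cc$ corresponding to $D\times\{0\}$; in particular $X/\Cc$ is Gorenstein with trivial canonical sheaf, the $3$-form extending across the codimension-$2$ locus $X^\Cc$ by normality.

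Next I would show that $\pi\colon Bl_{X^\Cc}(X/\Cc)\to X/\Cc$ is a crepant resolution. Blowing up commutes with this product, so locally $Bl_{X^\Cc}(X/\Cc)\cong D\times Bl_0(Y_m)$; one checks directly that, precisely because $m\le 2$, a single blow-up of the singular point of $\mathrm A_1$ or $\mathrm A_2$ already yields its minimal resolution, which is smooth, so $\widetilde{X/\Cc}$ is smooth. For crepancy, the minimal resolution of a Du Val singularity is crepant, hence so is $D\times Bl_0(Y_m)\to D\times Y_m$; since $K_{X/\Cc}$ is trivial, these local statements glue to $K_{\widetilde{X/\Cc}}\cong\pi^\ast K_{X/\Cc}$ globally.

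Finally, uniqueness: let $\pi'\colon Y'\to X/\Cc$ be any crepant resolution. Over the smooth locus $X/\Cc\smallsetminus X^\Cc$ it has no exceptional divisor (any exceptional divisor over a smooth point of a threefold has discrepancy $\ge 1$), hence is small; as a smooth variety is $\Q$-factorial, $\pi'$ is an isomorphism there. Over an analytic neighbourhood of $X^\Cc$ it restricts to a crepant resolution of the local model $D\times Y_m$, $m\le 2$; but $D\times Y_m$ is locally toric with a unique crepant subdivision of its fan, hence admits a unique crepant resolution, namely $Bl_{D\times\{0\}}(D\times Y_m)$. These local identifications agree on overlaps (both equal the blow-up), so $Y'\cong\widetilde{X/\Cc}$ over $X/\Cc$. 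I expect this last step to be the real obstacle: a priori a crepant resolution of a threefold is only unique up to flops, so one must genuinely exploit that the singularities here are \emph{constant} along the smooth curve $X^\Cc$ — of product type $D\times Y_m$ with $m\le 2$, leaving no flopping contraction available — or, equivalently, invoke the classification of crepant resolutions of three-dimensional cyclic quotient singularities. The local normal form (Cartan linearisation plus the $SL(2,\C)$-reduction) and the elementary blow-up computations for $\mathrm A_1$ and $\mathrm A_2$ are routine.
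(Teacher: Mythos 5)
Your proof is correct and follows essentially the same route as the paper's: Cartan linearisation at points of $X^\Cc$, reduction to the local product model $\C\times(\C^2/\Cc)$ with transversal $\mathrm A_1$ or $\mathrm A_2$ singularity, identification of the blow-up with the unique local crepant resolution, and gluing by uniqueness. The only differences are cosmetic — you derive the $SL(2,\C)$-normal form from the invariant $3$-form where the paper cites the special-unitary linearisation from Lemma \ref{lem:finitecover}, and you spell out the uniqueness step (no flops for the constant product model) which the paper leaves implicit.
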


\begin{proof}
The following proof is mostly standard. We give some details which
will prove useful in our constructions below.

To see that $X/\Cc$ has a \emph{unique} crepant resolution, we may
analyse each fixed point $x\in X^\Cc$ by applying Cartan's linearization trick
analogously to the proof of Lemma \ref{lem:finitecover}. Here, by the results
of Lemma \ref{lem:finitecover}, a non-trivial automorphism in $\Cc$  
is linearized at any fixed point by a special unitary map on
$\C^3$ with  precisely one eigenvalue $1$. 
Hence every singular point of $X/\Cc$ possesses a local neighborhood of 
the form $\C\times (\C^2/\Cc)$ such that $0\in \C^2/\Cc$ is a singularity 
of type $\mathrm{A}_1$ or $\mathrm{A}_2$.
In these local neighborhoods, the blowup $Bl_{X^\Cc}(X/\Cc)$ is given by
	\begin{equation*}
	Bl_{\C\times \{ 0\}}(\C\times (\C^2/\Cc))\cong \C \times Bl_0(\C^2/\Cc).
	\end{equation*}
Since $\Cc=\Z/2\Z$ or $\Z/3\Z$, this blowup is smooth.
In fact, it is the unique crepant resolution of the local model $\C\times (\C^2/\Cc)$.
By uniqueness, the local crepant resolutions glue together to give the unique 
crepant resolution $\widetilde{X/\Cc}$ of $X/\Cc$, and by construction, it coincides with $Bl_{X^\Cc}(X/\Cc)$.
\end{proof}
The resolution described above is in fact a simultaneous
resolution for $\widetilde{\mathcal{X}^\circ/\Cc}\longrightarrow \Bbo$:

\begin{prop}\label{prop:simultaneous}
	The family $\mathcal{X}^\circ/\Cc\longrightarrow \Bbo$ admits a simultaneous crepant resolution 
	$\widetilde{\mathcal{X}^\circ/\Cc}\longrightarrow \Bbo$. 
\end{prop}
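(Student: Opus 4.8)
The plan is to carry out the blow‑up of Proposition~\ref{LemResol} relatively over $\Bbo$ and to check that the resulting family is fibrewise the (unique) crepant resolution. Write $\mathcal F:=(\mathcal X^\circ)^\Cc\subset\mathcal X^\circ$ for the fixed locus of the $\Cc$‑action. Since $\Cc$ acts trivially on $\Bbo$ and the formation of a fixed locus commutes with base change, $\mathcal F\cap X_b=X_b^\Cc$ scheme‑theoretically for every $b\in\Bbo$. First I would record three elementary facts: $\mathcal F$ is smooth, being the fixed locus of a finite group acting (in characteristic zero) on the smooth variety $\mathcal X^\circ$; the fibres $X_b^\Cc$ of $\mathcal F\to\Bbo$ are the smooth curves recalled before Proposition~\ref{LemResol} (finite over $\cu$ by Lemma~\ref{lem:finitecover}), so $\mathcal F\to\Bbo$ is smooth of relative dimension one; and, because taking $\Cc$‑invariants is exact in characteristic zero and hence commutes with the flat base change $\mathcal O_{\Bbo}\to\kappa(b)$, the quotient $\mathcal X^\circ/\Cc\to\Bbo$ is flat with fibres $X_b/\Cc$. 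The candidate simultaneous resolution is then
\[
\widetilde{\mathcal X^\circ/\Cc}:=Bl_{\mathcal F}\bigl(\mathcal X^\circ/\Cc\bigr)\ \longrightarrow\ \mathcal X^\circ/\Cc,
\]
where $\mathcal F$ is viewed as a closed subscheme of $\mathcal X^\circ/\Cc$ through the (trivial) quotient map on $\mathcal F$.

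Next I would analyse this blow‑up locally along $\mathcal F$. Over the complement of $\mathcal F$ the group $\Cc$ acts freely on $\mathcal X^\circ$, so $\mathcal X^\circ/\Cc$ is smooth and smooth over $\Bbo$ there, and $\widetilde{\mathcal X^\circ/\Cc}\to\mathcal X^\circ/\Cc$ restricts to an isomorphism. Near $\mathcal F$ I would invoke an equivariant linearisation (the $\Cc$‑equivariant tubular‑neighbourhood theorem, or Luna's étale slice theorem for the finite, hence reductive, group $\Cc$): a $\Cc$‑invariant neighbourhood of $\mathcal F$ in $\mathcal X^\circ$ is $\Cc$‑equivariantly isomorphic over $\mathcal F$, and hence over $\Bbo$, to a neighbourhood of the zero section of the rank‑two normal bundle $N:=N_{\mathcal F/\mathcal X^\circ}$, on whose fibres $\Cc$ acts (by the computation already used in the proofs of Lemma~\ref{lem:finitecover} and Proposition~\ref{LemResol}) through a finite subgroup of $\mathrm{SU}(2)$, so that each $N_x/\Cc$ is a Du Val singularity of type $\mathrm{A}_1$ or $\mathrm{A}_2$. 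Trivialising $N$ $\Cc$‑equivariantly over a small open $V\subset\mathcal F$ identifies $\mathcal X^\circ/\Cc$ with $V\times(\C^2/\Cc)$ and $\mathcal F$ with $V\times\{0\}$, whence
\[
\widetilde{\mathcal X^\circ/\Cc}\ \cong\ V\times Bl_0(\C^2/\Cc)
\]
on this chart. As in Proposition~\ref{LemResol} the surface $Bl_0(\C^2/\Cc)$ is smooth and $Bl_0(\C^2/\Cc)\to\C^2/\Cc$ is the unique crepant resolution; since $V$ is smooth over $\Bbo$, this local model is smooth, is flat over $\Bbo$, and its fibre over $b$ is $V_b\times Bl_0(\C^2/\Cc)$, which is precisely the local model for $Bl_{X_b^\Cc}(X_b/\Cc)$ occurring in Proposition~\ref{LemResol}.

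Finally I would assemble these local statements into the global conclusion. The morphism $\widetilde{\mathcal X^\circ/\Cc}\to\mathcal X^\circ/\Cc$ is projective and birational, an isomorphism over the smooth locus of $\mathcal X^\circ/\Cc$, with smooth source, and it is crepant because it is so in each local model (the discrepancy of the exceptional divisor vanishes exactly because $\C^2/\Cc$ is Du Val). The composite $\widetilde{\mathcal X^\circ/\Cc}\to\Bbo$ is flat, being flat in every chart, and its fibre over $b\in\Bbo$ is canonically $Bl_{X_b^\Cc}(X_b/\Cc)$, i.e.\ the unique crepant resolution of $X_b/\Cc$ furnished by Proposition~\ref{LemResol}; thus $\widetilde{\mathcal X^\circ/\Cc}\to\Bbo$ is a simultaneous crepant resolution of $\mathcal X^\circ/\Cc\to\Bbo$ (and by the fibrewise uniqueness of Proposition~\ref{LemResol} it is the only one). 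The hard part is the relative local analysis near $\mathcal F$: one must know that the $\Cc$‑action on $\mathcal X^\circ$ linearises along the whole relative fixed locus in a way that is uniform over $\Bbo$ — equivalently, that the transverse singularity type of $\mathcal X^\circ/\Cc$ stays of type $\mathrm{A}_1$ resp.\ $\mathrm{A}_2$ along $\mathcal F$ and that the blow‑up commutes with restriction to fibres; once the product chart $V\times(\C^2/\Cc)$ is available, the rest is formal.
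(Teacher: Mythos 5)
Your proposal is correct and follows essentially the same route as the paper: local product charts near the fixed locus in which $\mathcal X^\circ/\Cc$ looks like $V\times(\C^2/\Cc)$ with a transverse $\mathrm{A}_1$ or $\mathrm{A}_2$ Du Val point, the unique local crepant resolutions $V\times Bl_0(\C^2/\Cc)$, and gluing via uniqueness. Packaging the result as the single global blow-up $Bl_{\mathcal F}(\mathcal X^\circ/\Cc)$ along the relative fixed locus is a mild (and arguably cleaner) reformulation of the paper's gluing argument, not a different method.
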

\begin{proof}
	Since $\mathcal{X}^\circ\longrightarrow \Bbo$ is smooth over $\Bbo$ and 
	$\Cc$ is finite, $\mathcal{X}^\circ$ is covered by affine $\Cc$-invariant open subsets 
	$D\cong D^\prime\times D^{\prime\prime}$ with $D^\prime\subset \Bbo$ and 
	$D^{\prime\prime}\subset X_b$ for all $b\in D^\prime$. 
	By Proposition \ref{LemResol}, we may assume without loss of generality that each
	$D/\Cc$ is either smooth or obeys
	\begin{equation}
	D/\Cc\cong D^\prime\times (D^{\prime\prime}/\Cc)\cong D^\prime\times \C\times (\C^2/\Cc),
	\end{equation}
	where $0\in(\C^2/\Cc)$ is a  singularity of type 
	$A_1$ or $A_2$. By the uniqueness of these resolutions, as in the proof of 
	Proposition \ref{LemResol},
	the resolutions $\widetilde{D/\Cc}\longrightarrow D/\Cc$ glue to
	\begin{equation*}
	\widetilde{\mathcal{X}^\circ/\Cc} \longrightarrow \mathcal{X}^\circ/\Cc
	\end{equation*}  
	over $\Bbo$. 
	The restriction to each $b\in \Bbo$ is the crepant resolution $\widetilde{X_b/\Cc}\longrightarrow X_b/\Cc$ 
	of Proposition \ref{LemResol} by construction. 
\end{proof}
We next determine the rational mixed Hodge structure ($\Q$-MHS) of the crepant resolution in Proposition \ref{LemResol}. 
As a preparation, we need to determine the exceptional divisor of the blowup. 
\begin{prop}\label{p:excdivisor}
For $b\in B^\circ$, let $X:=X_b$ and $Z:=\widetilde{X/\Cc}$. Moreover, let $E\hookrightarrow Z$ denote the exceptional divisor of the resolution $\rho\colon Z \to X/\Cc$. 
Then $\rho_{|E}\colon E\to X^\Cc$ is a $\mathbb{P}^1$-bundle if $\Cc=\Z/2\Z$. 
If $\Cc=\Z/3\Z$, then $E=E_1\cup E_2$ decomposes into two 
$\mathbb{P}^1$-bundles $\rho_{|E_j}\colon E_j\to X^\Cc$, $j\in \{ 1,2\}$.
\end{prop}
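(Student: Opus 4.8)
The plan is to work locally near the exceptional locus, using the explicit local model $\C\times(\C^2/\Cc)$ identified in the proof of Proposition \ref{LemResol}, and then to glue. First I would recall that by Lemma \ref{lem:finitecover} and the proof of Proposition \ref{LemResol}, at every point of $X^\Cc$ the quotient $X/\Cc$ looks like $\C\times(\C^2/\Cc)$, with $\C^2/\Cc$ an $\mathrm A_1$-singularity when $\Cc=\Z/2\Z$ and an $\mathrm A_2$-singularity when $\Cc=\Z/3\Z$; in the first case the linearized action on the normal $\C^2$ has weights $(1,-1)$ (i.e.\ $(\zeta,\zeta^{-1})$ for $\zeta=-1$), and in the second case weights $(\mu,\mu^{-1})$ for a primitive cube root $\mu$. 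Since $X^\Cc\subset X/\Cc$ is a smooth curve (Lemma \ref{lem:finitecover} gives that the fixed locus is one-dimensional, and it is smooth as the fixed locus of a finite group acting on a smooth variety), the blow-up $Z=Bl_{X^\Cc}(X/\Cc)$ has exceptional divisor $E=\rho^{-1}(X^\Cc)$, and the question is purely about the fibers of $\rho_{|E}\colon E\to X^\Cc$.

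Next I would analyze the local model. The minimal resolution of the $\mathrm A_n$-surface singularity $\C^2/\Cc$ has exceptional fiber a chain of $n$ copies of $\mathbb P^1$; for $\mathrm A_1$ ($n=1$) this is a single $\mathbb P^1$, for $\mathrm A_2$ ($n=2$) this is two $\mathbb P^1$'s meeting at a point. The key computation is that the blow-up $Bl_0(\C^2/\Cc)$ along the singular point already \emph{is} this minimal resolution: for $\mathrm A_1$ this is classical (blowing up the node of $\{xy=z^2\}$ resolves it with a single $(-2)$-curve), and for $\mathrm A_2$ one checks directly on the cone $\{xy=z^3\}$ (or on the toric description of $\C^2/(\Z/3\Z)$) that blowing up the maximal ideal of the vertex produces the two-curve chain; this is the content of ``$Bl_0(\C^2/\Cc)$ is the unique crepant resolution'' already invoked in the proof of Proposition \ref{LemResol}. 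Crossing with $\C$ and then with the curve factor $D'\times\C$, the exceptional divisor of $Bl_{D'\times\C\times\{0\}}(D'\times\C\times(\C^2/\Cc))$ is $D'\times\C\times(\text{exceptional fiber of }Bl_0(\C^2/\Cc))$, so over each local chart of $X^\Cc$ the divisor $E$ is a (trivial) bundle with fiber $\mathbb P^1$ in the $\Cc=\Z/2\Z$ case, and a union of two $\mathbb P^1$-bundles $E_1,E_2$ meeting along a section in the $\Cc=\Z/3\Z$ case; in the latter case $E_1$ and $E_2$ are globally distinguished because the two components of the exceptional chain of an $\mathrm A_2$-singularity are not exchanged by the monodromy here — the $\C^*$-action (and the constant weights $(\mu,\mu^{-1})$ of the $\Cc$-action on the normal bundle of $X^\Cc$, coming from the fixed linearization of Remark \ref{rem:CA}) trivializes the relevant $\Z/2$-torsor, so the splitting $E=E_1\cup E_2$ is consistent across charts.

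Finally I would glue: the local $\mathbb P^1$-bundle structures (respectively the local decompositions $E=E_1\cup E_2$) are canonically determined, hence patch to a global $\mathbb P^1$-bundle $\rho_{|E}\colon E\to X^\Cc$ when $\Cc=\Z/2\Z$, and to a global decomposition into two $\mathbb P^1$-bundles $\rho_{|E_j}\colon E_j\to X^\Cc$ when $\Cc=\Z/3\Z$. The main obstacle I anticipate is precisely the last point in the $\Z/3\Z$ case: verifying that the two components $E_1,E_2$ do not get swapped as one moves around $X^\Cc$, i.e.\ that the monodromy of the resolution on the set of exceptional components of the fiberwise $\mathrm A_2$-resolution is trivial. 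I expect this to follow from the fact that the normal bundle data along $X^\Cc$ is rigid — the $\Cc$-action near $X^\Cc$ is conjugate to its fixed linearization with constant character $\mu$ on one summand of the normal bundle and $\mu^{-1}$ on the other, and these two summands are themselves not interchanged — so that the ordered pair of exceptional curves is well defined globally. Making this rigidity precise (e.g.\ via the $\C^*$-equivariance built into $\mathcal S$, or by decomposing the rank-$2$ normal bundle of $X^\Cc$ into its two $\Cc$-eigen-subbundles) is the one step that requires genuine care rather than routine local computation.
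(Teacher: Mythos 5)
Your argument is correct, but it proceeds along a genuinely different route from the paper's. You work purely with the local model $\C\times(\C^2/\Cc)$ and then confront the real issue head-on: whether the two components of the fiberwise $\mathrm A_2$-resolution can be swapped by monodromy along $X^\Cc$. Your proposed fix — decompose the rank-two normal bundle of $X^\Cc$ in $X$ into its two $\Cc$-eigen-subbundles for the characters $\mu$ and $\mu^{-1}$ (these are distinct, so the splitting is canonical and global), and observe that each exceptional component of the $\mathrm A_2$-chain is attached to a definite eigen-direction — does close the gap, and it has the virtue of being intrinsic and of generalizing beyond the specific family at hand. The paper avoids the monodromy question entirely: since $\Cc=\Z/3\Z$ occurs only for $(\Delta_h,\Cc)=(\mathrm D_4,\Z/3\Z)$, it uses the explicit equations of Example \ref{ex:g2} and the fact that $X\to\cu$ is affine to embed $X/\Cc$ into $\mathrm{tot}(K_\cu^6\oplus K_\cu^6\oplus K_\cu^4\oplus K_\cu^3)$ via the invariants $\nu_1=\alpha_1^3,\ \nu_2=\alpha_2^3,\ \nu_3=\alpha_1\alpha_2,\ \nu_4=\alpha_3$, and computes the proper transform globally; the exceptional divisor then appears as $\{\gamma_1\gamma_2=0\}$ in a projectivized bundle over $X^\Cc$, so $E_1=\{\gamma_1=0\}$ and $E_2=\{\gamma_2=0\}$ are manifestly global $\mathbb{P}^1$-bundles (and, in effect, the coordinates $\nu_1,\nu_2$ are exactly your two eigen-directions, cubed). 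The trade-off is concreteness versus generality: the paper's computation is self-contained and leaves nothing to check, while your eigenbundle argument is the conceptual reason the decomposition is global and would apply verbatim to any fixed curve of transverse $\mathrm A_2$-type in a threefold quotient; to make it fully rigorous you would just need to spell out the (toric or chart-by-chart) identification of each exceptional component with an eigen-subbundle of the normal bundle, which is routine.
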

\begin{proof}
By Proposition \ref{LemResol}, $Z=Bl_{X^\Cc}(X)$. If $\Cc=\Z/2\Z$, 
it follows that $\rho_{|E}\colon E\to X^\Cc$ is a $\mathbb{P}^1$-bundle.
If $\Cc=\Z/3\Z$, then the fibers of $E\to X^\Cc$ consist of two 
transversally intersecting copies of $\mathbb{P}^1$. Using the fact that 
$X\to \cu$ is an affine morphism, we next show that $E$ decomposes 
globally into two $\mathbb{P}^1$-bundles over $\cu$.

We use the notations of Example \ref{ex:g2} for fixed 
$b=(b_2,b_6)\in \Bbo\subset H^0(\cu,K_\cu^2)\oplus H^0(\cu,K_\cu^6)$. 
Further we introduce the coordinates
\begin{equation}
\nu_1:=\alpha_1^3,\quad \nu_2:=\alpha_2^3, \quad \nu_3:=\alpha_1 \alpha_2, 
\quad \nu_4:=\alpha_3.
\end{equation}
on $X/\Cc$, such that with 
$M:=K_\cu^6\oplus K_\cu^6\oplus K_\cu^4\oplus K_\cu^3$,
\begin{equation*}
Z\cong\{ (\nu_1,\nu_2, \nu_3,\nu_4)\in \mathrm{tot}(M)~|~
\nu_1+\nu_2+\nu_4^2 + b_2\nu_3 +  b_6=0 ,\quad \nu_3^3-\nu_1\nu_2=0\}.
\end{equation*}	
Under this isomorphism, by what was said in Example \ref{ex:g2}, the singular locus $X^\Cc\subset X/\Cc$ is given by $\{ (0,0,0,\nu_4)\in X/\Cc~|~\nu_4^2 + b_6=0 \}$. 
To compute the exceptional divisor $E$ of $Z=Bl_{X^\Cc}(X/\Cc)$, note that $Bl_{X^\Cc}(X/\Cc)$ is the proper transform of $X/\Cc\subset \mathrm{tot}(M)$ 
in $Bl_{M^\prime_0}(M)$, where $M^\prime_0$ is the zero section of 
$M^\prime:=K_\cu^6\oplus K_{\cu}^6\oplus K_{\cu}^4$ viewed as a submanifold 
of $\mathrm{tot}(M)$. Hence we obtain $E$ as the proper transform over $X^\Cc\subset X/\Cc$.
Since $X/\Cc\to \cu$ is affine, the calculation essentially works as in the affine case and we obtain
\begin{equation}
E=\{ ((0,0,0,\nu_4),  
[\gamma_1:\gamma_2:\gamma_3])\in \mathrm{tot}(M)\times \mathbb{P}(M')~|~
\nu_4^2 + b_6=0, \quad \gamma_1\gamma_2=0 \}.
\end{equation}
In particular, $E=E_1\cup E_2$ and each $E_j$, $j\in \{1,2 \}$, 
is a $\mathbb{P}^1$-bundle over $X^\Cc$ as claimed.
\end{proof}

\begin{thm}\label{thm:MHS}
	Let $X=X_b,$ $b\in \Bbo$, and $p\colon X\to X/\Cc$ its quotient under the action of $\Cc=\langle a \rangle$. 
		Further let $Z:=\widetilde{X/\Cc}$ and $\rho\colon Z\to X/\Cc$ be the crepant resolution of $X/\Cc$.
	The $\Q$-MHS on $H^3(Z,\Q)$ is pure of weight $3$, and it is isomorphic to the direct sum
		\begin{equation}\label{eq:splitMHS}
		H^3(Z,\Q)\cong H^3(X,\Q)^\Cc\oplus H^1(X^\Cc,\Q)(-1)^{|a|-1}
		\end{equation}
		of pure $\Q$-Hodge structures of weight $3$.
		Here $H^{1}(X^\Cc,\Q)(-1)$ denotes the Tate twist of $H^{1}(X^\Cc)$ by the
			$\Q$-Hodge structure $\Q(-1)$ of weight $2$, shifting all weights by $2$.
\end{thm}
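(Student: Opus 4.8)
The plan is to compute $H^3(Z,\Q)$ by relating it, via the crepant resolution $\rho\colon Z\to X/\Cc$ and the quotient map $p\colon X\to X/\Cc$, to the $\Cc$-invariant part of $H^3(X,\Q)$ plus a contribution supported on the exceptional divisor. First I would recall that since $\Cc$ is finite, $p^*\colon H^\bullet(X/\Cc,\Q)\stackrel{\cong}{\to} H^\bullet(X,\Q)^\Cc$ is an isomorphism of $\Q$-MHS (transfer argument; $X/\Cc$ has quotient singularities so its rational cohomology still carries a pure Hodge structure in the relevant degrees since $X$ is smooth). Then I would invoke Proposition \ref{p:excdivisor}: the exceptional locus $E\subset Z$ is a $\mathbb P^1$-bundle over $X^\Cc$ when $\Cc=\Z/2\Z$, and a union $E=E_1\cup E_2$ of two $\mathbb P^1$-bundles over $X^\Cc$ meeting transversally along a section when $\Cc=\Z/3\Z$; in either case write $j\colon E\hookrightarrow Z$ for the inclusion and $\rho_E=\rho|_E\colon E\to X^\Cc$.

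The main tool is the blow-up exact sequence of mixed Hodge structures for $Z=Bl_{X^\Cc}(X/\Cc)$ along the smooth center $X^\Cc$. Away from the center $\rho$ is an isomorphism, so there is a long exact sequence
\begin{equation*}
\cdots \to H^{k-2}(X^\Cc,\Q)(-1)\to H^k(X/\Cc,\Q)\oplus H^{k-2}(E',\Q)(-1)\to H^k(Z,\Q)\to \cdots
\end{equation*}
and more precisely, using the projective-bundle (or blow-up) formula, $H^k(Z,\Q)\cong H^k(X/\Cc,\Q)\oplus \bigoplus_{i=1}^{c-1} H^{k-2i}(X^\Cc,\Q)(-i)$ where $c$ is the codimension of the center. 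Since $X^\Cc$ is a smooth curve of codimension $2$ in the threefold, $c=2$, so this contributes a single Tate-twisted copy $H^{k-2}(X^\Cc,\Q)(-1)$ in the $\Z/2\Z$ case. For the $\Z/3\Z$ case, the blow-up center inside $X/\Cc$ is a curve of $A_2$-singularities, and the minimal resolution of a transverse $A_2$-slice has exceptional fibre two $\mathbb P^1$'s; the correct statement is that $\rho_* \underline\Q_Z$ decomposes so that $H^k(Z,\Q)\cong H^k(X/\Cc,\Q)\oplus H^{k-2}(X^\Cc,\Q)(-1)^{\oplus 2}$ — one Tate-twisted copy of $H^{k-2}(X^\Cc)$ for each of the two exceptional $\mathbb P^1$-bundles $E_1,E_2$. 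Uniformly, the number of exceptional divisor components is $|a|-1$ (one for $A_1$, two for $A_2$), giving the exponent $|a|-1$ in the statement. Setting $k=3$ and combining with $H^3(X/\Cc,\Q)\cong H^3(X,\Q)^\Cc$ yields \eqref{eq:splitMHS}; all maps in sight are morphisms of $\Q$-MHS, and $H^1(X^\Cc,\Q)(-1)$ is pure of weight $3$, while $H^3(X,\Q)^\Cc$ is pure of weight $3$ as a sub-Hodge-structure of $H^3(X,\Q)$ — hence $H^3(Z,\Q)$ is pure of weight $3$. (One should check the lower-degree terms $H^1(X^\Cc)(-1)$ in degree $k=1$, i.e. $H^{-1}$, do not interfere: they vanish, so the degree-$3$ piece is clean; similarly $H^3(X/\Cc)\to H^3(Z)$ is injective because the boundary map from $H^2(X^\Cc)(-1)$ lands via the Gysin map and one uses that $X^\Cc\hookrightarrow X/\Cc$ followed by blow-up is standard.)

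The step I expect to be the main obstacle is making the blow-up / decomposition formula for $H^\bullet(Z,\Q)$ precise in the $\Z/3\Z$ case, where $X/\Cc$ is not smooth along $X^\Cc$: the classical blow-up formula assumes a smooth ambient space, so instead I would argue locally in the analytic topology using the local model $D'\times\C\times(\C^2/\Cc)$ from the proof of Proposition \ref{prop:simultaneous}, where the crepant resolution is $D'\times\C\times \widetilde{\C^2/\Cc}$ and $\widetilde{\C^2/\Cc}\to \C^2/\Cc$ is the minimal resolution of the $A_{|a|-1}$ singularity with exceptional fibre a chain of $|a|-1$ rational curves. Then I would use the decomposition theorem for $\rho$ (or the explicit fibre structure from Proposition \ref{p:excdivisor}, which globalizes the exceptional locus to $\mathbb P^1$-bundles over $X^\Cc$) to conclude $R\rho_*\Q_Z \cong \Q_{X/\Cc}\oplus (\iota_{X^\Cc})_*\Q_{X^\Cc}(-1)^{\oplus(|a|-1)}[-2]$, where $\iota_{X^\Cc}\colon X^\Cc\hookrightarrow X/\Cc$; taking hypercohomology and restricting to degree $3$ gives the claim, and purity is automatic from the purity of each summand. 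The Hodge-structure compatibility (that the exceptional contribution is genuinely $H^1(X^\Cc)(-1)$ and not something with extra weights) follows because the $\mathbb P^1$-bundle projections $\rho_{E_j}\colon E_j\to X^\Cc$ are smooth projective morphisms, so $H^2(E_j,\Q)\cong H^2(X^\Cc,\Q)\oplus H^0(X^\Cc,\Q)(-1)$ etc., and one tracks the relevant graded piece through the Mayer–Vietoris / Gysin sequences.
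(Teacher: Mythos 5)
Your proposal arrives at the correct decomposition, and its main computational inputs coincide with the paper's: the transfer isomorphism $p^*\colon H^\bullet(X/\Cc,\Q)\cong H^\bullet(X,\Q)^\Cc$, the structure of the exceptional divisor from Proposition \ref{p:excdivisor}, and Leray--Hirsch for the $\mathbb{P}^1$-bundles $E_j\to X^\Cc$ (plus a Mayer--Vietoris for $E=E_1\cup E_2$ when $|a|=3$). The organizing tool, however, is different. The paper never invokes the blow-up formula or the decomposition theorem; it writes down the Mayer--Vietoris sequence of $\Q$-MHS for the discriminant square of $\rho$, relating $H^k(X/\Cc)$, $H^k(Z)\oplus H^k(X^\Cc)$ and $H^k(E)$, shows $H^k(E)\cong H^{k-2}(X^\Cc,\Q)(-1)^{|a|-1}$ is \emph{pure} of weight $k$ while the weights of $H^{k+1}(X/\Cc)\cong H^{k+1}(X)^\Cc$ are bounded below by $k+1$ because $X$ is smooth, and concludes that the connecting maps $\delta_k$ vanish for weight reasons. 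This yields a short exact sequence $0\to H^3(X)^\Cc\to H^3(Z)\to H^1(X^\Cc)(-1)^{|a|-1}\to 0$ of $\Q$-MHS, which is then split by choosing polarizations. Your route via $R\rho_*\Q_Z\cong\Q_{X/\Cc}\oplus(\iota_{X^\Cc})_*\Q_{X^\Cc}(-1)^{\oplus(|a|-1)}[-2]$ is legitimate ($\rho$ is a proper semismall resolution and $X/\Cc$ is a $\Q$-homology manifold, so $\mathrm{IC}_{X/\Cc}\cong\Q_{X/\Cc}[3]$), and it buys you the splitting without any polarization argument; but to promote the topological decomposition to one of $\Q$-MHS you need Saito's mixed Hodge modules, or else you fall back on precisely the Mayer--Vietoris and weight bookkeeping that the paper carries out. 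You are right to discard the naive smooth blow-up exact sequence you first write down: $X/\Cc$ is singular along the center, so that sequence does not apply as stated, and in any case the term you write as $H^{k-2}(E')(-1)$ does not match the actual discriminant-square sequence.

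The one genuine gap is the purity input. You assert that $H^3(X,\Q)^\Cc$ is pure of weight $3$ ``as a sub-Hodge-structure of $H^3(X,\Q)$'', but $X$ is a \emph{non-compact} smooth threefold, so a priori $H^3(X,\Q)$ carries a mixed Hodge structure with weights in $\{3,\dots,6\}$, and purity of the invariant part does not follow formally. That $H^3(X,\Q)^\Cc$ is pure of weight $3$ and polarizable is a non-trivial fact about these particular quasi-projective Calabi--Yau threefolds, proved in \cite[Lemma 3.1]{DDP} for $\Cc=\{\id\}$ and in \cite[Lemma 4.3.1]{Beck} in general; without it neither the purity assertion of the theorem nor (in the paper's version of the argument) the splitting of the resulting extension can be established. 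Once this is cited, the rest of your outline goes through.
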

\begin{proof}
All cohomology in this proof is $\Q$-valued, so to save notation, we suppress all $\Q$-coefficients
in the following. By \cite[Corollary-Definition 5.37]{PS-MHS},
there is a long exact Mayer--Vietoris sequence of $\Q$-MHS, which reads
	\begin{equation}\label{eq:exseqMHS2}
	\begin{tikzcd}
	\cdots \ar[r] & H^2(E)  \ar[r,"\delta_2"] & H^3(X/\Cc) \ar[r, "(\rho^*{,}i^*)"] & H^3(Z)\oplus H^3(X^{\Cc}) \\
	&\ar[r, "i_E^*-\rho_{|E}^*"] & H^3(E) \ar[r, "\delta_3"] & H^{4}(X/\Cc) \ar[r] & \cdots
	\end{tikzcd}
	\end{equation}
	Here $i_E\colon E\hookrightarrow Z$ is the inclusion of the 
	exceptional divisor and $i\colon X^\Cc\to X/\Cc$ is the inclusion of the fixed curve.
	We now analyze the above sequence term-by-term.
	
	Firstly, $H^3(X^\Cc)=0$ because $\dim_{\R} X^\Cc=2$. 
	Secondly, since $\Cc$ is a finite group and $\Q$ is a field of characteristic $0$,
		\begin{equation}\label{eq:q}
		 p^\ast\colon H^k(X/\Cc)\to H^k(X)^\Cc 
		 \end{equation}
	 is an isomorphism of $\Q$-vector spaces (see 
	 e.g. \cite[\S 3, Corollary 2.3.]{BorelTransformationGroups}). 
		Furthermore $p^\ast$ is a morphism of $\Q$-MHS. 
		The category of $\Q$-MHS is abelian, so that
		$p^\ast$ is an isomorphism of $\Q$-MHS. 
		Moreover, since $X$ is smooth (though non-compact), the weights of
		$H^k(X)$ are bounded below by $k$, hence so are the weights of  $H^k(X/\Cc)\cong H^k(X)^\Cc$. 
		
   Next we show that the $\Q$-MHS on $H^k(E)$ is in fact pure of weight $k$.  
		More precisely, we claim that there is an isomorphism
		\begin{equation}\label{eq:HkE}
		H^k(E)\cong \left[ H^{k-2}(X^\Cc)(-1)\right]^{|a|-1} \qquad\mbox{ for } k\in\{2,\, 3\}
		\end{equation}
		of  $\Q$-MHS which is in fact an 
		isomorphism of pure Hodge structures since the right hand side is pure. 
		Indeed, if $\Cc=\Z/2\Z$, then
		by Proposition \ref{p:excdivisor} we know that
		$E\to X^\Cc$ is a $\mathbb{P}^1$-bundle over $X^\Cc$. 
		Hence the Leray--Hirsch theorem (see for example \cite[Theorem 7.33]{VoisinI}) 
		gives an isomorphism \eqref{eq:HkE} of pure $\Q$-Hodge structures 
		of weight $k$ with $|a|=2$. 
	If $\Cc=\Z/3\Z$, then by Proposition \ref{p:excdivisor} we know that
	$E=E_1\cup E_2$ in $Z$ decomposes into two $\mathbb{P}^1$-bundles over $X^\Cc$ with transverse intersection. 
	The Mayer--Vietoris sequence for the triple $(Z,E_1,E_2)$ yields the short exact sequence
\begin{equation}\label{eq:mv}
\begin{tikzcd}
0 \ar[r] & \mathrm{cok} \ar[r] & H^k(E) \ar[r] & \mathrm{ker} \ar[r] & 0
\end{tikzcd}
\end{equation}
	of $\Q$-MHS.
	Here $\mathrm{cok}$ is the cokernel of 
	$(i_1^*,i_2^*)\colon H^{k-1}(E_1)\oplus H^{k-1}(E_2)\to H^{k-1}(E_1\cap E_2)$ 
	where $i_j\colon E_j\hookrightarrow Z$, $j\in \{ 1,2\}$, is the inclusion. 
	Moreover, $\mathrm{ker}$ is the kernel of $i_1^*-i_2^*\colon H^k(E_1)\oplus H^k(E_2)\to H^k(E)$.
	Now \eqref{eq:mv} implies that $H^k(E)$ only has weights $k-1$ and $k$. 
	If $k\geq 2$, then $\mathrm{cok}=0$ because $(i_1^*,i_2^*)$ is then surjective. 
	Hence we again conclude by means of the Leray--Hirsch theorem that \eqref{eq:HkE} holds, now with $|a|=3$.
The previous observations imply that the morphisms 
$\delta_k\colon H^k(E)\to H^{k+1}(X/\Cc)$, $k\in \{2,3 \}$, have to vanish due to the weights of the $\Q$-MHS.
Combining this fact with the isomorphisms \eqref{eq:q} and \eqref{eq:HkE}, the long exact sequence \eqref{eq:exseqMHS2} yields the short exact sequence
		\begin{equation}\label{eq:exseqMHS}
	\begin{tikzcd}
	0 \ar[r] & H^3(X)^\Cc \ar[r] & H^3(Z) \ar[r] & H^1(X^\Cc)(-1)^{|a|-1} \ar[r] & 0
	\end{tikzcd}
	\end{equation}
of $\Q$-MHS.
The $\Q$-MHS on $H^3(X)^\Cc$ is known to be  pure of weight $3$ and polarizable 
(see \cite[Lemma 4.3.1]{Beck}, also \cite[Lemma 3.1]{DDP} in 
the case where $\Cc=\{\id\}$).
By choosing a polarization on each of the pure $\Q$-Hodge 
structures of weight $3$ in \eqref{eq:exseqMHS}, we arrive at an isomorphism \eqref{eq:splitMHS}.
\end{proof}
\begin{cor}\label{cor:JZisogeny}
	The intermediate Jacobian of the crepant resolution 
	$Z$ of $X/\Cc$, $X=X_b,$ $b\in \Bbo$, is an abelian variety 
	and admits the non-canonical isogeny decomposition
	\begin{equation}\label{eq:JZ}
	J^2(Z)\simeq J_{\Cc}^2(X)\times J(X^\Cc)^{|a|-1}. 
	\end{equation}
	Here $J(X^\Cc)$ is the Jacobian of the curve $X^\Cc\subset X$.
\end{cor}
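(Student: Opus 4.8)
The plan is to deduce the corollary entirely from Theorem~\ref{thm:MHS}, by passing from an isomorphism of \emph{rational} Hodge structures to an isogeny of the associated intermediate Jacobians. By Theorem~\ref{thm:MHS}, $H^3(Z,\Q)$ is a pure polarizable $\Q$-Hodge structure of weight $3$, so $J^2(Z)=H^3(Z,\C)/\big(F^2H^3(Z,\C)+H^3(Z,\Z)\big)$ is \emph{a priori} only a compact complex torus; one has to show it is in fact an abelian variety and to pin it down up to isogeny.

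First I would record the general principle involved. If $H$ is a polarizable pure $\Q$-Hodge structure of weight $3$ with $h^{3,0}(H)=0$, and $H_\Z\subset H$ is any full lattice, then $H$ is the Tate twist $V(-1)$ of a polarizable weight-$1$ Hodge structure $V$ with $V^{1,0}=H^{2,1}$; under this identification $F^2H_\C=F^1V_\C$ and $H_\Z\cong V_\Z$, so the intermediate Jacobian $J(H,H_\Z):=H_\C/\big(F^2H_\C+H_\Z\big)$ equals the polarized abelian variety $V_\C/\big(F^1V_\C+V_\Z\big)$ attached to $V$. Moreover, any isomorphism $\phi\colon H\to H'$ of such Hodge structures carries $H_\Z$ to a lattice commensurable with $H'_\Z$ (both are full $\Q$-rational lattices in $H'$, so this is routine), whence $J(H,H_\Z)$ and $J(H',H'_\Z)$ are isogenous. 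Since a direct sum of polarizable weight-$3$ Hodge structures with vanishing $(3,0)$-part again has vanishing $(3,0)$-part, applying this principle to the splitting \eqref{eq:splitMHS} shows that $J^2(Z)$ is an abelian variety, isogenous to the product of the intermediate Jacobians of the two summands $H^3(X,\Q)^\Cc$ and $H^1(X^\Cc,\Q)(-1)^{\,|a|-1}$.

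Next I would identify these two pieces. With the lattice $H^3(X,\Z)^\Cc$, the intermediate Jacobian of $H^3(X,\Q)^\Cc$ is $J^2_\Cc(X)$ by the defining formula \eqref{J2C}; it is an abelian variety because it is a fiber of the Calabi--Yau integrable system $J^2_\Cc(\X^\circ/\Bbo)$ by \cite[Theorem~5.2.1]{Beck}, so in particular $h^{3,0}\big(H^3(X,\Q)^\Cc\big)=0$, as required above. As $X^\Cc$ is a smooth projective curve, $H^1(X^\Cc)(-1)$ is pure of weight $3$ and concentrated in bidegrees $(2,1)$ and $(1,2)$, hence has vanishing $(3,0)$-part; with the lattice $H^1(X^\Cc,\Z)(-1)\cong H^1(X^\Cc,\Z)$ and $F^2\big(H^1(X^\Cc)(-1)\big)=F^1H^1(X^\Cc)$, its intermediate Jacobian is $H^1(X^\Cc,\C)/\big(F^1H^1(X^\Cc,\C)+H^1(X^\Cc,\Z)\big)=J(X^\Cc)$. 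Combining with the previous paragraph yields the non-canonical isogeny $J^2(Z)\simeq J^2_\Cc(X)\times J(X^\Cc)^{\,|a|-1}$.

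The main obstacle is conceptual rather than computational, and it is essentially already resolved: the splitting \eqref{eq:splitMHS} is only a splitting of \emph{rational} Hodge structures (it is produced in the proof of Theorem~\ref{thm:MHS} by choosing polarizations), so it does not respect the integral cohomology lattices, which is exactly why one obtains an isogeny and not an isomorphism; and the assertion that $J^2(Z)$ is a genuine abelian variety — rather than merely a complex torus — hinges on the vanishing $h^{3,0}(H^3(Z))=0$, which we borrow from the fact \cite{Beck} that $J^2_\Cc(X)$ is a fiber of an algebraic integrable system. Granting Theorem~\ref{thm:MHS}, everything else is routine.
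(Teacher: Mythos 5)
Your proof is correct and follows essentially the same route as the paper: both rest on Theorem \ref{thm:MHS} together with the effectivity of $H^3(X,\Z)^\Cc$ (pure of weight $1$ up to a Tate twist, i.e.\ $h^{3,0}=0$), for which the cleaner citation is \cite[Corollary 4.3.2]{Beck} (or \cite[Lemma 3.1]{DDP} for trivial $\Cc$) rather than the slightly indirect deduction you make from $J^2_\Cc(X)$ being a fiber of an integrable system. The only difference is mechanical: the paper upgrades the Mayer--Vietoris argument to a short exact sequence of \emph{integral} polarizable Hodge structures and splits the resulting sequence of abelian varieties by Poincar\'e reducibility, whereas you split already at the level of rational Hodge structures via \eqref{eq:splitMHS} and then invoke commensurability of full lattices; both mechanisms are valid and yield the same isogeny.
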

\begin{proof}
	Since the Mayer--Vietoris sequence is defined over the integers, 
	the proof of Theorem \ref{thm:MHS} gives the short exact sequence
		\begin{equation*}
		\begin{tikzcd}
		0 \ar[r] & H^3(X/\Cc,\Z)\ar[r] & H^3(Z,\Z) \ar[r] & H^1(X^\Cc,\Z)(-1)^{|a|-1}\ar[r] & 0
		\end{tikzcd}
		\end{equation*}
		of polarizable $\Z$-Hodge structures of weight $3$.
		The $\Z$-Hodge structure $H^3(X,\Z)^\Cc$ is known to be 
		effective\footnote{A pure $\Z$-Hodge structure of non-negative weight $w$ is called effective if $H^{p,q}=\{0\}$ for all $(p,q)\notin \mathbb{N}^2$ with $p+q=w$.}
		and pure of weight $1$ up to a Tate twist by \cite[Lemma 3.1]{DDP} for $\Cc=\{ \id\}$ 
		and \cite[Corollary 4.3.2]{Beck} in general.
		Therefore both polarizable $\Z$-Hodge structures $H^3(X/\Cc,\Z)$ 
		and $H^3(Z,\Z)$ are effective of weight $1$ up to a Tate twist as well.
		In particular, the intermediate Jacobians $J^2(X/\Cc)$, 
		$J^2(Z)$ and $J^2_{\Cc}(X)$ are abelian varieties.
		Since $J^2(X/\Cc)$ is isogenous to $J^2_{\Cc}(X)$ by the previous 
		proof (see \eqref{eq:q}), Poincar\'{e} reduciblity (e.g. \cite[Theorem 6.20]{Debarre-tori}) 
		implies a non-canonical isogeny decomposition
		\begin{equation*}
		J^2(Z)\simeq J^2_\Cc(X)\times J(X^\Cc)^{|a|-1}.
		\end{equation*}
\end{proof}
\begin{rem}
	 In Examples \ref{ex:A3cys} and \ref{ex:g2} we have seen that the genus of $X_b^\Cc$, $b\in \Bbo$, is strictly larger than 
	 \fontdimen16\textfont2=4pt
	 $g_\cu\geq 2$. 
	 \fontdimen16\textfont2=2.5pt
	In particular, $X_b^\Cc$ is a non-trivial branched covering of $\cu$ and $J(X_b^\Cc)$ varies non-trivially in $b\in\Bbo$.
	By inspection of all semi-universal $\C^\ast$-deformations of $\mathrm{ADE}$-type singularities one immediately checks that this always holds when
	$\Cc$ is non-trivial.
\end{rem}
\begin{cor}\label{cor:JZ}
Let $\mathcal{Z}\to \mathcal{X}^\circ/\Cc$ be the simultaneous crepant resolution over $\Bbo$.
	Then the intermediate Jacobian fibration $J^2(\mathcal{Z}/\Bb^\circ)$ 
	admits a non-canonical fiberwise isogeny decomposition 
		\begin{equation}\label{eq:JZglobal}
		J^2(\mathcal{Z}/\Bb^\circ)\simeq J^2_{\Cc}(\mathcal{X}^\circ/\Bb^\circ)\times 
		J(\ \X^\Cc_{|\Bb^\circ})^{|a|-1}
		\end{equation}
		over $\Bbo$.
\end{cor}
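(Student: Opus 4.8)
The plan is to run the proofs of Proposition \ref{p:excdivisor}, Theorem \ref{thm:MHS} and Corollary \ref{cor:JZisogeny} \emph{in families} over $\Bbo$, obtaining a short exact sequence of polarizable integral variations of Hodge structure, and then to split it up to isogeny over the entire base. First I would observe that the geometry underlying Theorem \ref{thm:MHS} is fibre-uniform: the morphism $\bm{\pi}\colon\X^\circ\to\Bbo$ is smooth with fibrewise $\Cc$-action \cite[\S 3.3]{Beck2}; by Proposition \ref{prop:simultaneous} the simultaneous crepant resolution $\rho\colon\mathcal Z\to\X^\circ/\Cc$ is smooth over $\Bbo$, with structure map $\bm{\pi}_{\mathcal Z}\colon\mathcal Z\to\Bbo$; and its exceptional divisor $\mathcal E\subset\mathcal Z$ is, by the relative version of Proposition \ref{p:excdivisor} (whose proof only uses that $\X^\circ\to\cu\times\Bbo$ is affine and is therefore already uniform in $b$), a $\mathbb P^1$-bundle over $\X^\Cc_{|\Bbo}$ when $\Cc=\Z/2\Z$ and a transverse union $\mathcal E=\mathcal E_1\cup\mathcal E_2$ of two such when $\Cc=\Z/3\Z$. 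Writing $\bm{\pi}'\colon\X^\circ/\Cc\to\Bbo$ and $\bm{\pi}^\Cc\colon\X^\Cc_{|\Bbo}\to\Bbo$ for the remaining structure maps, all relevant higher direct images are then local systems on $\Bbo$ carrying polarizable $\Z$-VHS, and the relative Mayer--Vietoris sequence \cite[Corollary--Definition 5.37]{PS-MHS} for $(\mathcal Z,\mathcal E,\X^\Cc_{|\Bbo})$, together with the relative Leray--Hirsch theorem and the weight/vanishing arguments of Theorem \ref{thm:MHS} carried out fibrewise, produces a short exact sequence
\begin{equation}\label{eq:globalSES}
0\longrightarrow R^3\bm{\pi}'_\ast\Z\longrightarrow R^3(\bm{\pi}_{\mathcal Z})_\ast\Z\longrightarrow \bigl(R^1(\bm{\pi}^\Cc)_\ast\Z\bigr)(-1)^{|a|-1}\longrightarrow 0
\end{equation}
of polarizable $\Z$-VHS of weight $3$ over $\Bbo$. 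As in Theorem \ref{thm:MHS}, pullback along $\X^\circ\to\X^\circ/\Cc$ gives $R^3\bm{\pi}'_\ast\Q\cong\bigl(R^3\bm{\pi}_\ast\Q\bigr)^{\Cc}$, so the associated intermediate Jacobian fibrations satisfy $J^2((\X^\circ/\Cc)/\Bbo)\simeq J^2_\Cc(\X^\circ/\Bbo)$.

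Next I would split \eqref{eq:globalSES} rationally. Tensoring with $\Q$ and using that the category of polarizable $\Q$-VHS on the smooth connected base $\Bbo$ is semisimple --- concretely, by taking the orthogonal complement of a sub-VHS with respect to a polarization form, which restricts to a polarization on the sub-VHS --- one obtains a sub-$\Q$-VHS of $R^3(\bm{\pi}_{\mathcal Z})_\ast\Q$ mapping isomorphically onto $\bigl(R^1(\bm{\pi}^\Cc)_\ast\Q\bigr)(-1)^{|a|-1}$; intersecting the corresponding $\Q$-sub-local systems with the integral lattices produces sub-$\Z$-VHS whose direct sum maps onto $R^3(\bm{\pi}_{\mathcal Z})_\ast\Z$ with finite cokernel. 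By \cite[Corollary 4.3.2]{Beck} the term $R^3\bm{\pi}'_\ast\Z$ is polarizable and, up to a Tate twist, an effective $\Z$-VHS of weight $1$ (and the curve term $R^1(\bm{\pi}^\Cc)_\ast\Z$ obviously is), hence so is $R^3(\bm{\pi}_{\mathcal Z})_\ast\Z$; therefore $J^2_\Cc(\X^\circ/\Bbo)$, $J^2(\mathcal Z/\Bbo)$ and $J(\X^\Cc_{|\Bbo})$ are abelian schemes over $\Bbo$ (as for their fibres in Corollary \ref{cor:JZisogeny}), and the finite-index lattice inclusions above yield a fibrewise isogeny
\begin{equation*}
J^2(\mathcal Z/\Bb^\circ)\ \simeq\ J^2_{\Cc}(\X^\circ/\Bb^\circ)\times J(\X^\Cc_{|\Bb^\circ})^{|a|-1}
\end{equation*}
over $\Bbo$ restricting on each fibre to the isogeny of Corollary \ref{cor:JZisogeny}.

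The step I expect to be the main obstacle is this last splitting. Fibrewise Poincar\'e reducibility, as used in Corollary \ref{cor:JZisogeny}, produces a complementary abelian subvariety only up to isogeny in each fibre, and these complements need not glue over $\Bbo$; the relative statement is what makes the decomposition coherent. Concretely, the identity component of the kernel of the composite $J^2(\mathcal Z/\Bbo)\to J^2(\mathcal Z/\Bbo)^\vee\to J^2((\X^\circ/\Cc)/\Bbo)^\vee$ (dualise the inclusion coming from \eqref{eq:globalSES} and compose with a relative polarization) is an abelian subscheme of locally constant fibre dimension, because its restriction to $J^2((\X^\circ/\Cc)/\Bbo)$ is a relative polarization, hence an isogeny of locally constant degree --- this is the abelian-scheme incarnation of the semisimplicity invoked above. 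A minor additional point is to confirm that the $\mathbb P^1$-bundle description of $\mathcal E$ and the Mayer--Vietoris/Leray--Hirsch isomorphisms are functorial in $b\in\Bbo$, which is immediate from the explicit, fibre-uniform nature of the arguments in Proposition \ref{p:excdivisor} and the proof of Theorem \ref{thm:MHS}.
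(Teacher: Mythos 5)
Your proposal is correct and follows essentially the same route as the paper: globalize Proposition \ref{p:excdivisor} and the Mayer--Vietoris/Leray--Hirsch argument of Theorem \ref{thm:MHS} to a short exact sequence of polarizable variations of Hodge structure over $\Bbo$, and then split it up to isogeny as in Corollary \ref{cor:JZisogeny}. The one place you go beyond the paper is in making the fibrewise splitting coherent over $\Bbo$ via semisimplicity of polarizable $\Q$-VHS (orthogonal complements for a relative polarization) rather than invoking Poincar\'e reducibility fibre by fibre; the paper leaves this implicit, and your treatment is a legitimate and slightly more careful way to fill that gap.
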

\begin{proof}
The key observation is that Proposition \ref{p:excdivisor} works over $\Bbo$ 
by arguing similarly to the proof of Proposition \ref{prop:simultaneous}. 
Indeed, the proof of Theorem \ref{thm:MHS} globalizes to an isomorphism of 
polarizable rational variations of Hodge structures of weight $3$ over $\Bbo$. 
Then by the same reasoning as in the proof of Corollary \ref{cor:JZisogeny} 
we have a fiberwise isogeny \eqref{eq:JZglobal}.
\end{proof}
In \cite[Theorem 5.2.1]{Beck} it was proven that there is an isomorphism
\begin{equation*}
J^2_\Cc(\mathcal{X}^\circ/\Bb^\circ)\cong \mathcal{M}(\Sigma,G)_{|\Bbo}
\end{equation*}
of algebraic integrable systems over $\Bbo$.
These are in turn isomorphic to $\mathcal{M}(\Sigma,G_h)^\Cc_{|\Bbo}$ by 
Theorem \ref{thm:hitchinfibers}.
In particular, $J^2(\mathcal{Z}/\Bb^\circ)$ contains $\mathcal{M}(\Sigma,G)_{|\Bbo}$ 
up to isogeny by Corollary \ref{cor:JZ}.
This corollary also implies that $J^2(\mathcal{Z}/\Bb^\circ)\to \Bbo$ cannot admit the 
structure of an algebraic integrable system for dimensional reasons when $\Cc$ is non-trivial.
It is an intriguing question if $\mathcal{Z}\to \Bbo$ embeds into a larger family 
$\widehat{\mathcal{Z}}\to \widehat{\Bbo}$ such that 
$J^2(\widehat{\mathcal{Z}}/\Bb^\circ)\to \widehat{\Bbo}$ has the structure of an algebraic integrable system.
And if so, does passing to the crepant resolutions have an analogue for Hitchin systems?
We hope to return to these questions in future work.

\appendix

\section{An isomorphism $S\cong S_{h|(\tfr_h/W_h)^\Cc}$ in our running example}
\label{app:sh}
In Example \ref{runslices}, we constructed a Slodowy slice 
$S\subset \gfr(\Delta)$ with a $\Cc$-action for our running 
example $\Delta=\mathrm{C}_2=\mathrm{A}_{3,\Cc}$.
In the following, we construct a Slodowy slice 
$S_h\subset \gfr(\Delta_h)=\mathfrak{sl}_4(\C)$ with a $\Cc$-action 
and a $(\C^*\times \Cc)$-equivariant inclusion $S\hookrightarrow S_h$ 
which is an isomorphism over $(\tfr_h/W_h)^\Cc\cong \tfr/W$, in accordance 
with Theorem \ref{thm:Slosli}. We also construct an explicit 
$(\C^\ast\times \Cc)$-equivariant isomorphism of $S_h$ with the 
semi-universal $\C^\ast$-deformation obtained by means of the 
Jacobian ring for the surface singularity of type $\mathrm A_3$.

Consider the $\mathfrak{sl}_2$-triple
$(x_h,y_h,h_h)$ with
$$
x_h=\left( \begin{matrix} 0&1&1&0\\ 0&0&0&-1\\  0&0&0&-1\\  0&0&0&0\end{matrix}\right),\;\;
y_h=x_h^T=\left( \begin{matrix} 0&0&0&0\\ 1&0&0&0\\  1&0&0&0\\  0&-1&-1&0\end{matrix}\right),\;\;
h_h=[x_h,y_h] = \mbox{diag}\left(2,0,0,-2\right)
$$
in $\mathfrak{sl}_4(\C)$.
For the corresponding Slodowy slice we obtain
$$
S_h = x_h + \ker \mathrm{ad}_{y_h}
= \left\{ \left. \begin{pmatrix} u_1^-&1&1&0\\ u_1^+ - u_2^-&-u_1^-& 2u_1^-&-1\\ 
u_2^+ + u_2^-&2u_1^-&-u_1^-&-1\\u_3^-&u_2^- - u_2^+&-u_1^+ - u_2^-&u_1^-  \end{pmatrix}
\right| 
u_1^\pm,\,  u_2^\pm,\, u_3^-\in \C \right\}. 
$$
To lift the $\Cc$-action to $S_h$, according to Remark \ref{rem:CA} we need to pick
some automorphism in $\CA(x_h,y_h)$ as in \eqref{CAdef} which does not belong
to $\CA(x_h,y_h)^\circ$. One checks that setting
$$
\forall\, m\in\C^\ast\colon\;
M_m:= \left(\begin{matrix} m&0&0&0\\ 0&{1\over2}(m+m^{-3})&{1\over2}(m-m^{-3})&0\\[3pt]
0&{1\over2}(m-m^{-3})&{1\over2}(m+m^{-3})&0\\ 0&0&0&m \end{matrix}\right),\qquad
\varphi_a\colon A\longmapsto-A^{\widetilde T},
$$
we obtain
$$
\CA(x_h,y_h) = \left\{ \mbox{Ad}_{M_m}, \mbox{Ad}_{M_m}\circ\varphi_a \mid m\in \C^\ast\right\}.
$$
The unique nontrivial automorphism in $\CA(x_h,y_h)$ which indudes
$\alpha_1^\vee\leftrightarrow\alpha_3^\vee$ on $\tfr_h$ is $\varphi_a$, which thus generates our choice of $\Cc$-action on 
$\mathfrak{g}_h=\mathfrak{sl}_4(\C)$.
Since $\varphi_a$ induces the desired automorphism $a$ on $\Delta_h$,  by what was said in 
Remark \ref{rem:CA}, on $S_h$ we have the correct induced action
$u_k^+\mapsto u_k^+,\, u_k^-\mapsto-u_k^-$, yielding
$$
S_h^\Cc 
= \left\{ \left. \left( \begin{matrix} 0&1&1&0\\ u^+_1&0&0&-1\\ u^+_2&0&0&-1\\
0&-u^+_2&-u^+_1&0  \end{matrix} \right) \right|
u^+_1,\, u^+_2\in \C\right\}.
$$
As explained in Example \ref{runchis}, with respect to the standard coordinates
$\xi_h$ on $\mathfrak t_h/ W_h$ we may calculate 
\begin{eqnarray*}
	&&\xi_h\circ\chi_h\colon
	\left( \begin{matrix} u_1^-&1&1&0\\ u_1^+ - u_2^-&-u_1^-& 2u_1^-&-1\\
		u_2^+ + u_2^-&2u_1^-&-u_1^-&-1\\u_3^-&u_2^- - u_2^+&-u_1^+ - u_2^-&u_1^-  \end{matrix} \right)
	\qquad\qquad\qquad\qquad\qquad\qquad\\
	&&\hphantom{\xi_h\circ\chi_h\colon\xi_h\chi_h}
	\stackrel{\eqref{chisexterior}}{\longmapsto} \left( 
	\begin{matrix}-6(u_1^-)^2-2(u_1^+ + u_2^+)\\[3pt] -8(u_1^-)^3+4u_1^-(u_1^+ + u_2^+)-2u_3^-\\[3pt]
		-3(u_1^-)^4+6(u_1^-)^2(u_1^+ + u_2^+) + 6u_1^-u_3^- +(u_1^+ - u_2^+)^2 -4(u_{2}^-)^2\end{matrix}
	\right)^T
\end{eqnarray*}
which as one checks is $\Cc$-equivariant.

This form allows us to construct an explicit isomorphism between $S_h$ and
the semi-universal $\C^\ast$-deformation $\mathcal Y_h$ of the surface singularity
of type $\mathrm A_3$ given in Example~\ref{ex:A3-sing}. Indeed, setting 
$(b_2,b_3,b_4):=\xi_h\circ\chi_h(A)$ for $A\in S_h$, with parameters
$u_k^\pm$ as before, the above calculation yields
\begin{eqnarray*}
u_1^+ + u_2^+ &=& -{\textstyle{1\over2}} b_2-3(u_1^-)^2,\\[2pt]
u_3^- &=& -{\textstyle{1\over2}} b_3-4(u_1^-)^3+2u_1^-(u_1^+ + u_2^+) 
= -{\textstyle{1\over2}} b_3-10(u_1^-)^3-b_2u_1^-, \\[2pt]
b_4 &=&  -81(u_1^-)^4 -9(u_1^-)^2b_2 -3u_1^-  b_3
+(u_1^+ - u_2^+)^2 -4(u_{2}^-)^2.
\end{eqnarray*}
Hence
\begin{equation}\label{unfoldingsagree}
(x,\,y,\,z):= (3u_1^-, \, u_1^+ - u_2^+ + 2u_2^-, \, u_1^+ - u_2^+ -2u_2^-)
\end{equation}
yields
$$
S_h = \left\{ (x,y,z, b_2,b_4,b_6)\in\C^6~|~  b_4 =  -x^4 - b_2x^2 -  b_3 x +yz\right\}
$$
in accord with \eqref{YhA3}. One immediately checks that \eqref{unfoldingsagree}
is $\Cc$-equivariant, since by the above, $a$ acts by $u_k^\pm\mapsto\pm u_k^\pm$,
in accord with \eqref{eq:suHaction}. 

Next we obtain 
$$
\xi_h\circ\chi_h\left(S_h^\Cc\right)
=\left\{  \left(-2(u_1^+ + u_2^+),\, 0,\, (u_1^+ - u_2^+)^2\right) \mid u_1^+,\, u_2^+\in \C\right\}
=\xi_h \left(  (\mathfrak t_h/ W_h)^{\Cc} \right)
$$
by Example \ref{runchis}, and hence
$$
s_h(u_1^-,\,u_2^-,\,u_1^+,\,u_2^+)
:=
\left( \begin{matrix} u_1^-&1&1&0\\ u_1^+ - u_2^-&-u_1^-& 2u_1^-&-1\\ 
u_2^+ + u_2^-&2u_1^-&-u_1^-&-1\\-4(u_1^-)^3+2u_1^-(u_1^+ + u_2^+)&u_2^- - u_2^+&-u_1^+ - u_2^-&u_1^-  \end{matrix} \right), \;\; u_k^\pm\in\C
$$
parametrizes $\chi_h^{-1}\left( (\tfr_h/W_h)^\Cc \right)$. 
Moreover, we have
\begin{align}\label{homslice}
&\xi_h\circ\chi_h\colon
\chi_h^{-1}\left( (\tfr_h/W_h)^\Cc \right) \quad\longrightarrow\quad 
\xi_h\left((\tfr_h/W_h)^\Cc\right)=\C^2
\nonumber
\\
&s_h(u_1^-,\,u_2^-,\,u_1^+,\,u_2^+) \longmapsto 
\left( 
\begin{matrix}-6(u_1^-)^2-2(u_1^+ + u_2^+)\\ 0\\
-27(u_1^-)^4+18(u_1^-)^2(u_1^+ + u_2^+)  +(u_1^+ - u_2^+)^2 -4(u_2^-)^2\end{matrix}
\right)^T.
\end{align}
The $\C^\ast$-action of $\lambda\in\C^\ast$, $\lambda=e^t$, on $S_h$ is obtained by conjugation with the matrix $\exp(-th_h)=\mbox{diag}(\lambda^{-2},1,1,\lambda^2)$, followed by multiplication by $\lambda^2$. 
For the $u_k^\pm$ we thereby read off:
$$
\lambda\cdot(u_1^-,u_2^-,u_3^-, u_1^+,u_2^+)= (\lambda^2 u_1^-,\lambda^4 u_2^-,\lambda^6u_3^-, \lambda^4 u_1^+, \lambda^4 u_2^+).
$$
One now immediately checks that \eqref{unfoldingsagree} is
also $\C^\ast$-equivariant, if in \eqref{eq:suC} one doubles all weights,
as explained in Remark \ref{rem:Cstar}.

According to Theorem \ref{thm:Slosli}, there is a $(\C^\ast\times\Cc)$-equivariant 
isomorphism over $(\tfr_h/W_h)^\Cc\cong\tfr/W$ from $S$ to $\chi_h^{-1}\left( (\tfr_h/W_h)^\Cc \right)$.
This is best visible by choosing new parametrizations for the bases 
$(\tfr_h/W_h)^\Cc$ and $\tfr/W$ of our semi-universal $\C^\ast$-deformations. 
We let
$$
\widetilde\xi_h\circ\chi_h\colon \chi_h^{-1} \left(  (\tfr_h/W_h)^\Cc \right) \longrightarrow\C^2,\qquad
\widetilde\xi_h\circ \xi_h^{-1}(b_2^h,\,0,\, b_4^h) = (-{\textstyle{1\over6}}b_2^h,\, -b_4^h ),
$$
so by \eqref{homslice} we have
$$
\widetilde\xi_h\circ\chi_h \left(s_h(u_1^-,\,u_2^-,\,u_1^+,\,u_2^+)\right)
=
\left( \begin{matrix}
(u_1^-)^2+{\textstyle{1\over3}}(u_1^+ + u_2^+)\\[5pt]
27(u_1^-)^4-18(u_1^-)^2(u_1^+ + u_2^+)  -(u_1^+ - u_2^+)^2 +4(u_2^-)^2
\end{matrix}\right)^T.
$$
In the notation of Example \ref{runslices}, we similarly let
$$
\widetilde\xi\circ\chi\colon  S \longrightarrow\C^2,\qquad
\widetilde\xi\circ \xi^{-1} (b_2,\, b_4) = ({\textstyle{1\over2}}b_2,\, 9(b_4-{\textstyle{1\over4}}b_2^2) ),
$$
so by \eqref{inhomslice} we have
$$
\widetilde\xi\circ\chi \left(  s(v_1^-,\,v_2^-,\,v_1^+,\,v_2^+) \right)
=
( (v_1^-)^2-v_2^+,\quad
36(v_1^-)^2v_2^+  -9\left((v_2^-)^2 +(v_1^+)^2\right)).
$$
One now checks that
\begin{eqnarray*}
	\Phi\colon S&\longrightarrow& \chi_h^{-1} \left(  (\tfr_h/W_h)^\Cc \right), \\
	s(v_1^-,\,v_2^-,\,v_1^+,\,v_2^+) &\mapsto &
	s_h\left( {\textstyle\sqrt{2\over3}}v_1^-,\,{\textstyle{3i\over2}}v_2^-,\,
	{\textstyle{1\over2}}(v_1^-)^2+{\textstyle{3\over2}}\left(v_1^+-v_2^+\right),\,
	{\textstyle{1\over2}}(v_1^-)^2-{\textstyle{3\over2}}\left(v_1^++v_2^+\right)\right)
\end{eqnarray*}
yields a $(\C^\ast\times\Cc)$-equivariant isomorphism such that the diagram
\begin{equation}\label{eq:PhiPsi}
\begin{tikzcd}
S\ar[r, "\Phi"] \ar[r] \ar[d] & \chi_h^{-1}((\tfr_h/W_h)^\Cc) \ar[d] \\
\tfr/W\ar[r, "\widetilde\xi_h^{-1}\circ\widetilde\xi"] & (\tfr_h/W_h)^\Cc
\end{tikzcd}
\end{equation}
commutes.

Some aspects of this example have already been worked out in \cite[Appendix B]{Beck2}.
Beware that the triple $(x_0,y_0,h_0)$ in $\mathfrak{sl}_4(\C)$ 
given there is not an $\mathfrak{sl}_2$-triple 
but only satisfies $[x_0,y_0]=h_0$.
However, the corresponding slice still gives the correct semi-universal 
$\C^*$-deformation of a $\mathrm{B}_2$-singularity.
Moreover, there, the Lie algebra $\mathfrak{so_5}(\C)$ 
of type $\Delta=\mathrm{B}_2$ is used rather than the (isomorphic) Lie algebra $\mathfrak{sp_4}(\C)$ of type $\Delta=\mathrm{C}_2$.

\bibliographystyle{alpha}
\bibliography{bibtex1}

\end{document}